\renewcommand{\email}[2][]{%
  \ifx\emails\@empty\relax\else{\g@addto@macro\emails{,\space}}\fi%
  \@ifnotempty{#1}{\g@addto@macro\emails{\textrm{(#1)}\space}}%
  \g@addto@macro\emails{#2}%
}
\setlist[enumerate,1]{label=(\arabic*),ref=(\arabic*)}
\newcommand{\mathd}{\mathrm{d}}
\newcommand{\R}{\mathbbm{R}}
\DeclareMathOperator*{\argmin}{argmin}
\newtheorem{theorem}{Theorem}[section]
\newtheorem{proposition}{Proposition}[section]
\crefname{hypothesis}{Hypothesis}{Hypotheses}
\title[Structure-preserving function approximation]{Structure-preserving function approximation via convex optimization}
\author{Vidhi Zala}
\author{Robert M. Kirby}
\author{Akil Narayan}
\address[V.~Zala, R.~M.~Kirby]{Scientific Computing and Imaging Institute and School of Computing, University of Utah, Salt Lake City, UT 84112}
\address[A.~Narayan]{Scientific Computing and Imaging Institute and Department of Mathematics, University of Utah, Salt Lake City, UT 84112}
\email[V.~Zala]{vidhi.zala@utah.edu}
\email[A2]{kirby@cs.utah.edu}
\email[A3]{akil@sci.utah.edu}
\newcommand{\ddx}[1]{\frac{\mathd}{\mathd #1}}
\newcommand{\dx}[1]{\mathd #1}
\newcommand{\N}{\mathbbm{N}}
\newcommand{\bs}[1]{\boldsymbol{#1}}
\renewcommand{\hat}[1]{\widehat{#1}}
\theoremstyle{example}
\newtheorem{example}{Example}[section]
\newcommand{\an}[1]{#1}
\DeclareMathOperator{\sign}{sign}
\begin{document}

\begin{abstract}

Approximations of functions with finite data often do not respect certain ``structural'' properties of the functions. For example, if a given function is non-negative, a polynomial approximation of the function is not necessarily also non-negative. We propose a formalism and algorithms for preserving certain types of such structure in function approximation. In particular, we consider structure corresponding to a convex constraint on the approximant (for which positivity is one example). The approximation problem then converts into a convex feasibility problem, but the feasible set is relatively complicated so that standard convex feasibility algorithms cannot be directly applied. We propose and discuss different algorithms for solving this problem. One of the features of our machinery is flexibility: relatively complicated constraints, such as simultaneously enforcing positivity, monotonicity, and convexity, are fairly straightforward to implement. We demonstrate the success of our algorithm on several problems in univariate function approximation.
\end{abstract}
\maketitle

\section{Introduction}

The approximation of functions as a linear combination of basis functions is a foundational technique in numerical analysis and scientific computing. For example, such a linear combination or expansion is often used as an emulator for the original function, or as an ansatz for the solution to a differential equation. If, e.g., the original function is smooth, then such approximations are often accurate, but they may not adhere to other kinds of \textit{structure} that the function possesses. The simplest example of such structure is positivity: if $f$ is a positive function, an accurate polynomial approximation of $f$ need not also be positive. Other types of structure that arise in practice are monotonicity or maximum and minimum value constraints. If an approximation violates the implicit structure of a function, the resulting computation may produce unphysical predictions, and may cause solvability issues in numerical schemes for solving differential equations \cite{zhang_maximum-principle-satisfying_2011}.

In this paper, we present a general framework for preserving structure in function approximation from a linear subspace. ``Structure" in our context refers to fairly general types of linear inequality constraints, including positivity and monotonicity. However, we demonstrate that our setup can also handle more exotic types of constraints. The model by which we impose structure is straightforward: construct the approximation that best fits the available data, subject to the structural constraints. We observe that imposing our type of structure on the approximation corresponds to a convex constraint on the vector of expansion coefficients (i.e., the coordinates of the approximation in a basis of the linear space). Thus, our notion of structure-preserving approximation corresponds to a convex optimization problem. Unfortunately, the resulting convex set is ``complicated", and we cannot utilize standard algorithms to solve this problem. We therefore develop two algorithms to solve this problem, each of which is advantageous in different situations. We subsequently formulate a hybrid algorithm that achieves superior performance compared to the original two algorithms. In summary, the contributions of this paper are as follows:
\begin{itemize}
  \item We formalize a new model for computing structure-preserving approximations of functions. This model can successfully compute function approximations that respect canonical structure such as positivity and/or monotonicity, but can also embed much richer, nontrivial structure, cf. Figure \ref{fig:Resexpt2}. A particular advantage of our approach is that the formalism is identical for all these types of structure; e.g., the procedure for preserving positivity versus monotonicity is fundamentally the same.
  \item We show that this model corresponds to a finite-dimensional convex \an{semi-infinite} optimization problem. We subsequently characterize the feasible set as an intersection of conic sets (Theorem \ref{thm:C-convex}), and show that the optimization problem, and hence our structure-preserving approximation model, has a unique solution. See Theorem \ref{thm:solution}.
  \item Our convex optimization problem can be cast as a problem of projecting onto a convex set (the feasible set). Unfortunately the feasible set is not, in general, a polytopic region in coefficient space. Hence, a finite number of linear inequality constraints cannot characterize the feasible set. We instead characterize the convex feasible set as one with an (uncountably) infinite number of supporting hyperplanes. We use this characterization to develop two types of algorithms for computing the solution to the optimization problem. We also combine these two algorithms into a hybrid approach that is more efficient than either algorithm alone. These three approaches are detailed in Section \ref{sec:algorithms}. 
  \item We demonstrate with numerical results in one dimension with polynomial approximations that the resulting algorithm produces approximations satisfying desired constraints. We also show that, for our examples, rates of convergence of polynomial approximation are unchanged compared to the unconstrained case.
\end{itemize}
\an{Our problem formulation (along with its mathematical properties) holds in the multivariate approximation case; the major drawback in such cases is that our algorithms require global optimization of multivariate functions, which is a difficult problem in general.}
In order to compute solutions to the constrained optimization problem, our algorithms iteratively ``correct" an unconstrained initial guess. For one of our algorithms, these corrections are essentially Dirichlet kernels for the approximation space. We visualize some correction functions for enforcing positivity in polynomial approximation in Figure \ref{fig:corrections}.

In Section \ref{sec:setup}, we introduce notation, describe the types of constraints we consider, and present the structure-preserving approximation model. Section \ref{sec:method} analyzes the feasible set of the model and shows that a unique solution exists. Section \ref{sec:algorithms} presents our proposed algorithms for computing solutions. Finally, Section \ref{sec:results} contains numerical results and demonstrations.

\begin{figure}[htbp]
  \begin{center}
    \resizebox{0.75\textwidth}{!}{
      \adjincludegraphics[width=0.33\textwidth,trim={0 0 {.5\width} 0},clip]{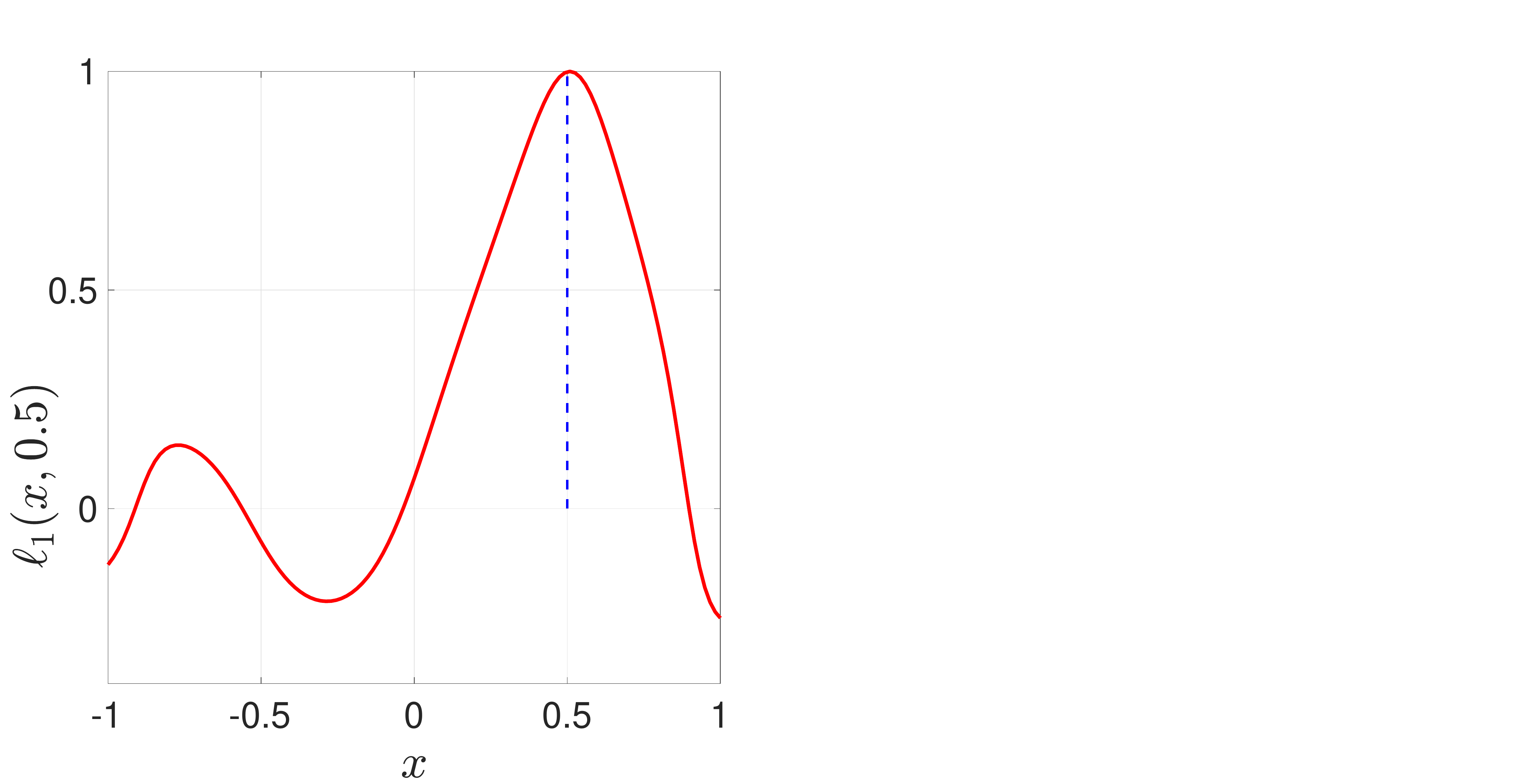}
      \adjincludegraphics[width=0.33\textwidth,trim={0 0 {.5\width} 0},clip]{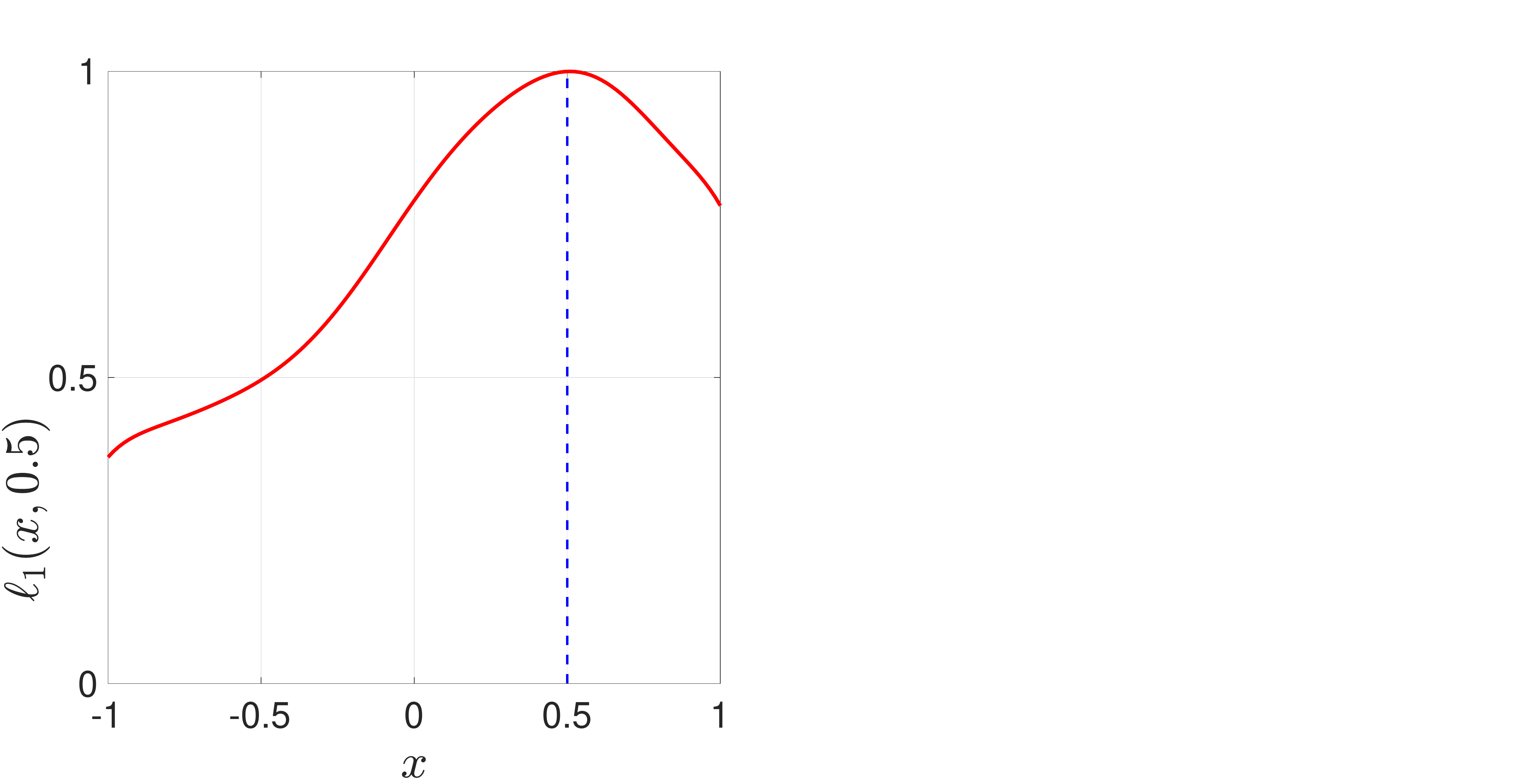}
      \adjincludegraphics[width=0.33\textwidth,trim={0 0 {.5\width} 0},clip]{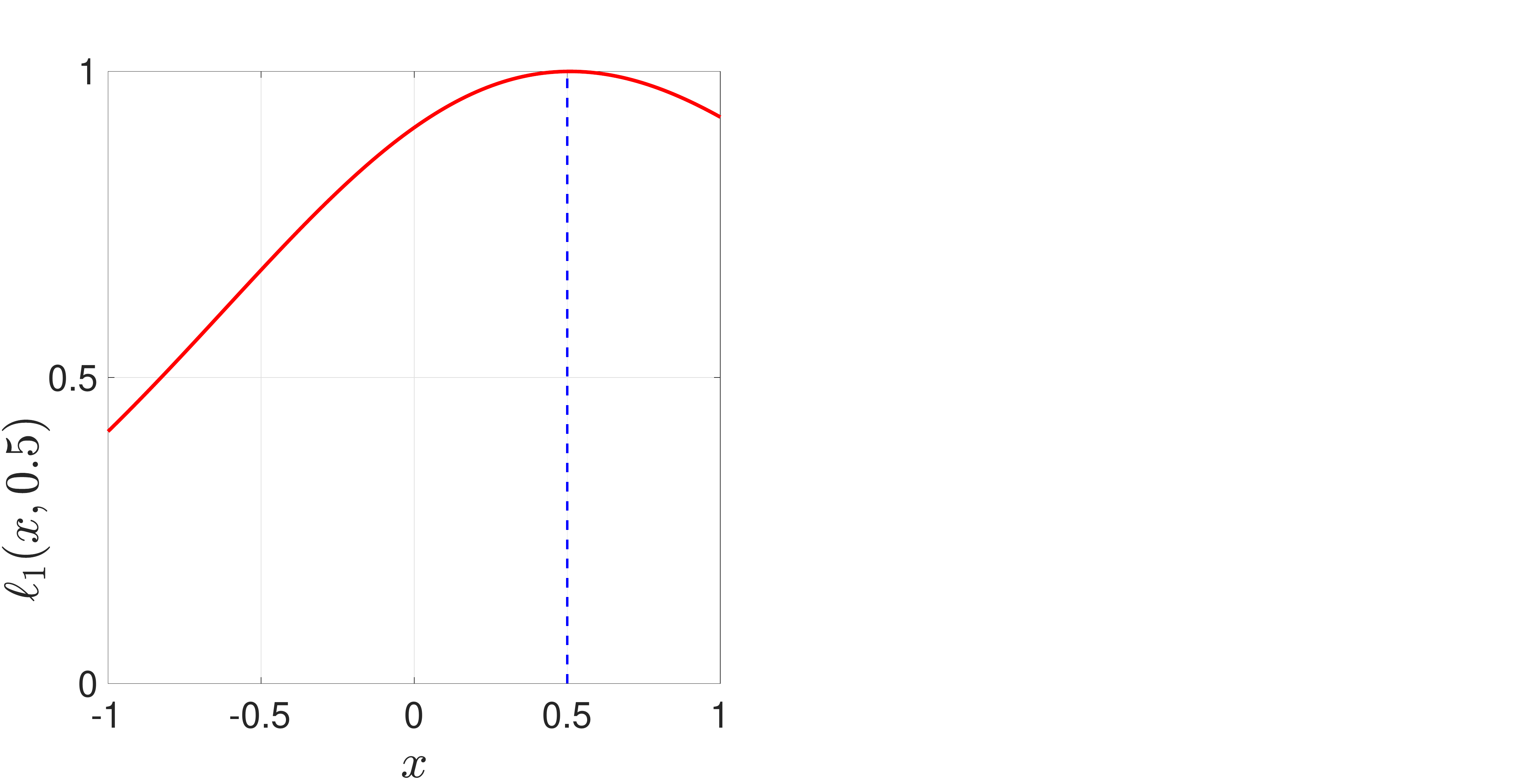}
    }\\
    \resizebox{0.75\textwidth}{!}{
      \adjincludegraphics[width=0.33\textwidth,trim={0 0 {.5\width} 0},clip]{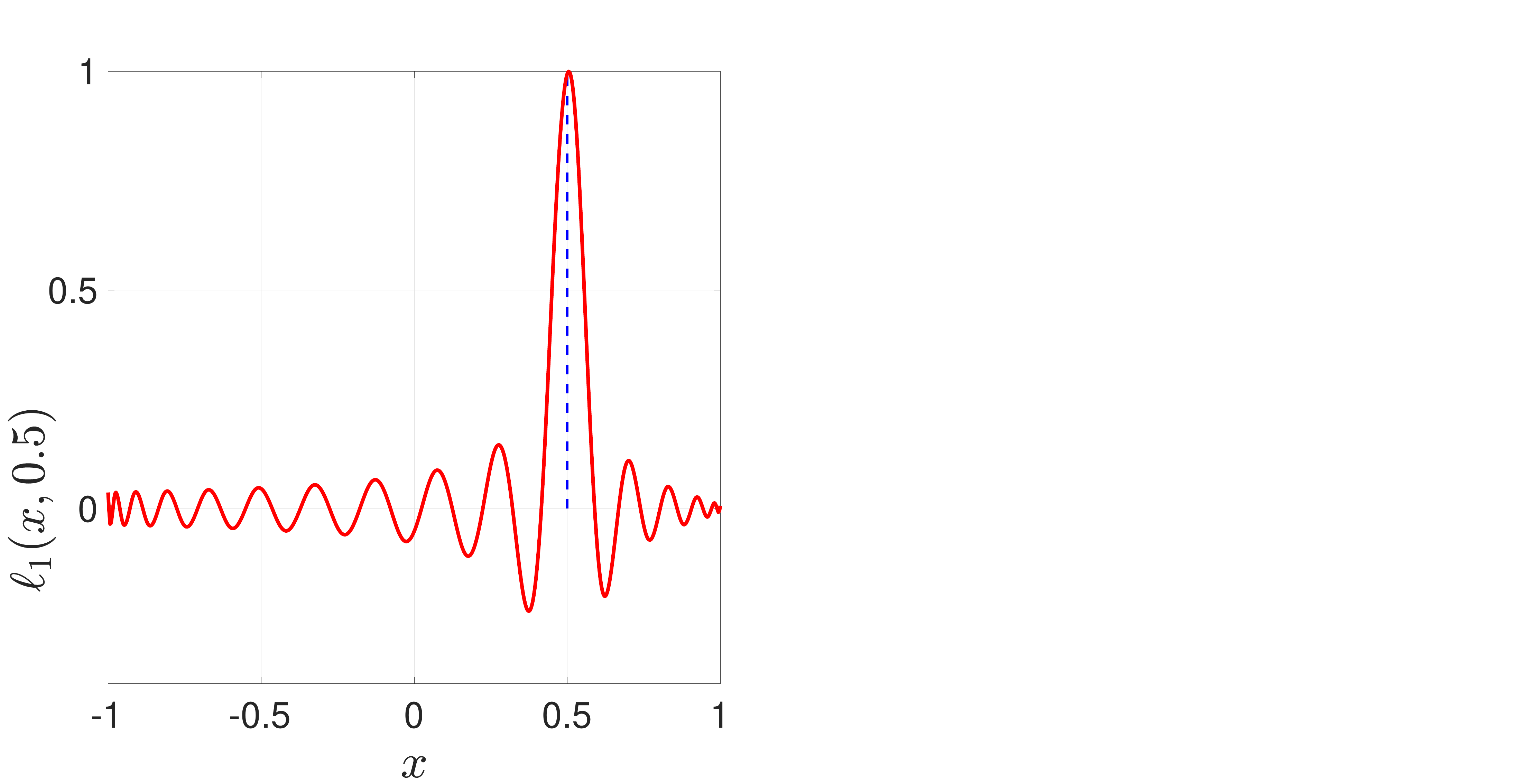}
      \adjincludegraphics[width=0.33\textwidth,trim={0 0 {.5\width} 0},clip]{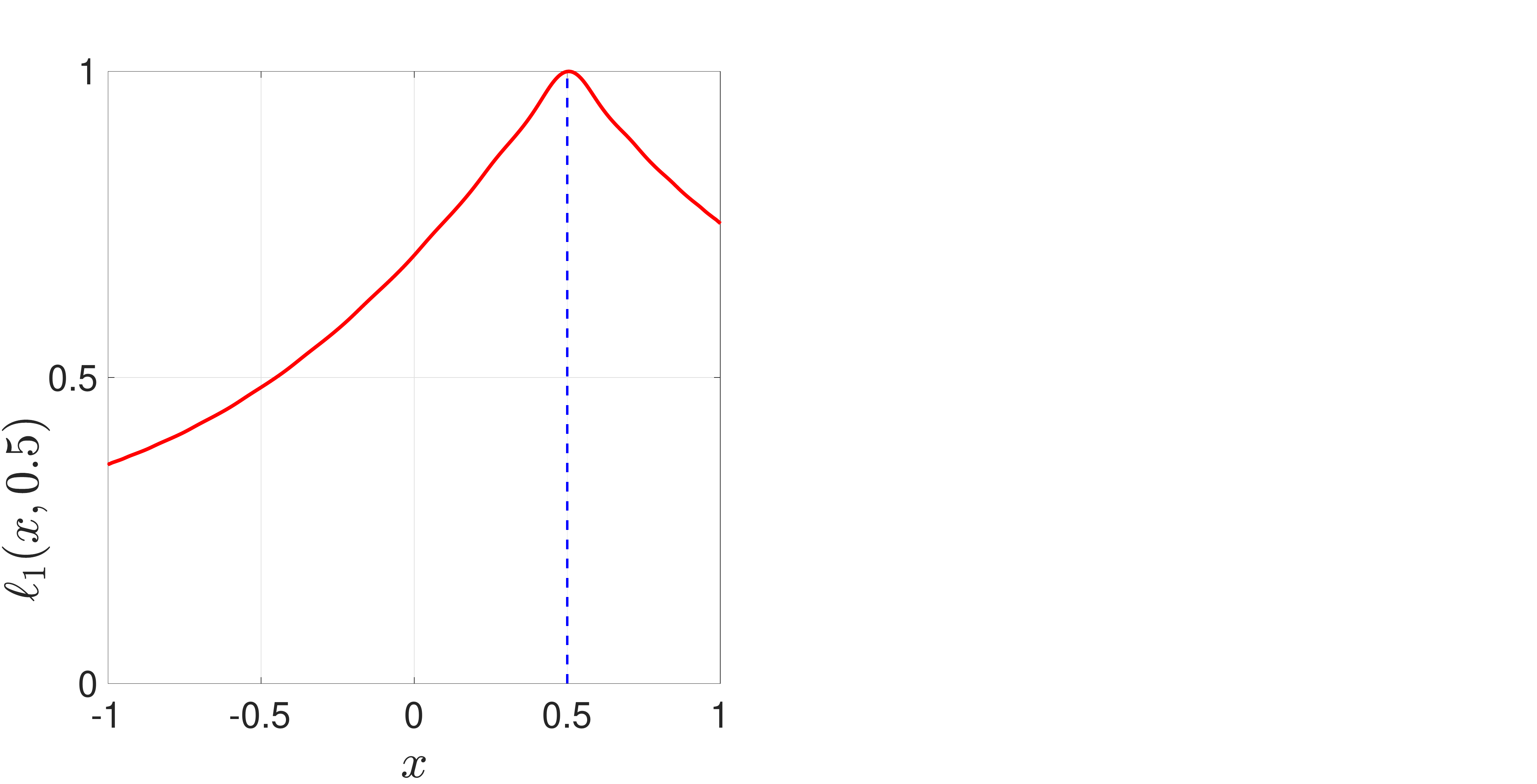}
      \adjincludegraphics[width=0.33\textwidth,trim={0 0 {.5\width} 0},clip]{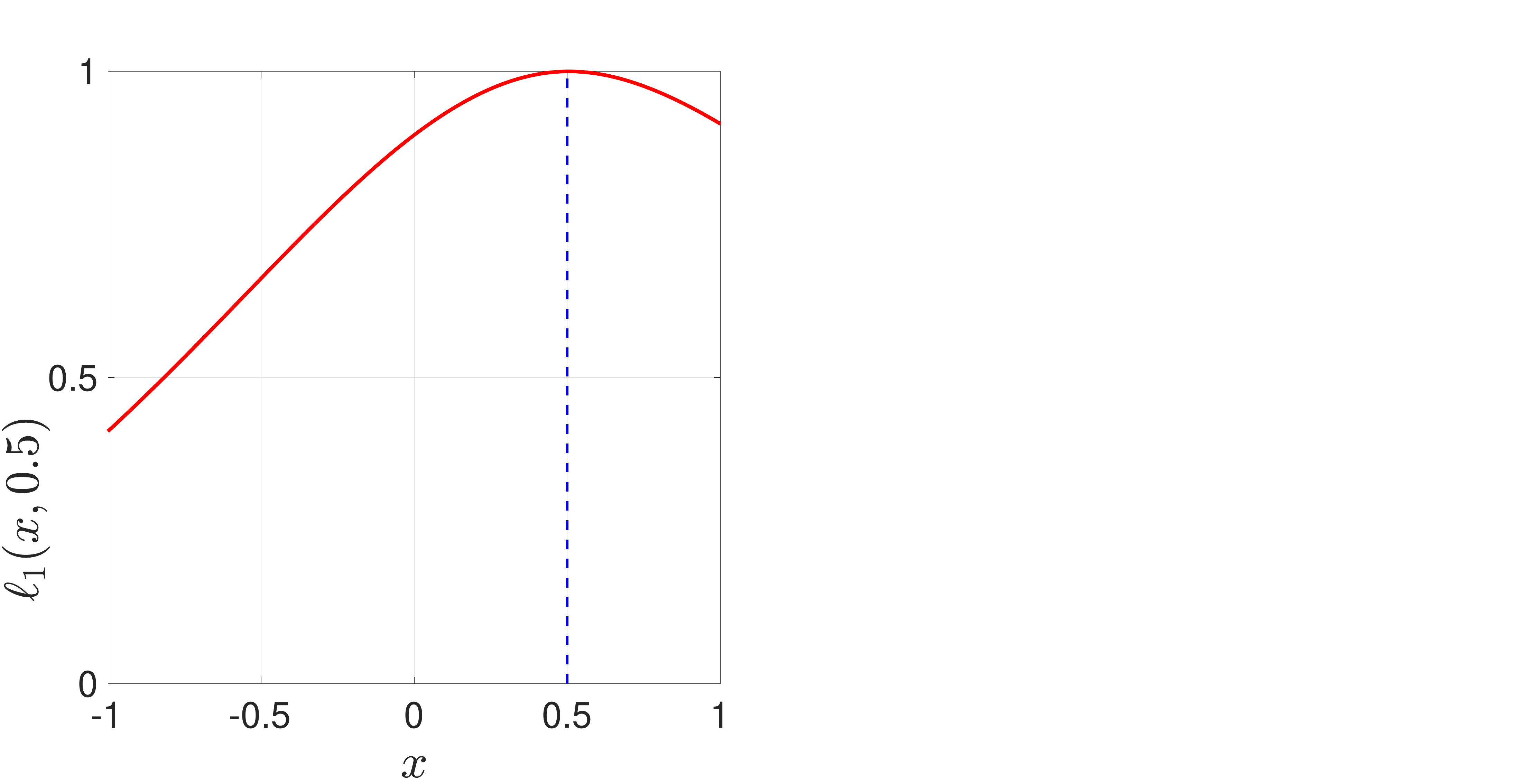}
    }
  \end{center}
  \caption{Correction functions used to enforce positivity in a univariate polynomial approximation. Our algorithm adds scaled/combined versions of these functions to enforce constraints. Shown are corrections targeted to enforce positivity at $x = 0.5$. Correction functions are shown for polynomials of degree 5 (top) and 30 (bottom). The columns correspond to corrections in different ambient Hilbert spaces. Left: $L^2([-1,1])$. Center: $H^1([-1,1])$. Right: $H^2([-1,1])$.}\label{fig:corrections}
\end{figure}

\subsection{Existing and alternative approaches}\label{ssec:lit}

There are several existing techniques for building special kinds of structure-preserving approximations. We will frequently use positivity as an explicit example below to make notions clear.

One simple technique in enforcing positivity in function approximation is to enforce positivity as a finite number of points in the domain. This technique makes the feasible set much easier to characterize and results in applicability of several off-the-shelf algorithms \cite{boyd_convex_2004}. However, these approaches do not guarantee positivity on the entire domain, which our structure-preserving model does enforce. Another class of techniques uses mapping methods. For example, if we approximate $\sqrt{f}$ and square the resulting approximation, then the squared approximation is guaranteed to be positive. There are also more complicated but successful approaches to construct positive approximations \cite{campos-pinto_algorithms_2019}. Although these approaches are attractive, such mapping functions are not easy to construct for more complicated constraints.

Another approach is to adapt the basis; for example, by expanding a function in Bernstein polynomials that are positive on some domain, we can ensure the positivity of the approximation on that domain if all the expansion coefficients are non-negative. Therefore, one forms an approximation subject to the positivity of the coefficients. However, this approach does not yield polynomial reproduction even in simple cases. Consider the following basis for quadratic polynomials in one dimension: $v_1(x) = 1-x^2$, $v_2(x) = (1-x)(x+3)$ and $v_3(x) = (x+1)(3-x)$. Note that on $[-1,1]$, these three functions are all non-negative. However, the (unique) representation of $f\equiv 1$ (that is also non-negative) in this basis is 
\begin{align*}
  f(x) = -\frac{1}{2} v_1(x) + \frac{1}{4} v_2(x) + \frac{1}{4} v_3(x),
\end{align*}
which clearly does not have positive expansion coefficients. Alternative approaches use an adaptive construction scheme for certain kinds of constraints \cite{berzins_adaptive_2007}; our framework allows much more general constraints and is not restricted by dimension, although in this paper we consider only univariate examples.

In general, each of the techniques above is different, and they must usually be nontrivially adapted when a new kind of structure is desired or if a different approximation space is used. The model we employ in this work is general-purpose, handling rather general types of constraints and very general approximation spaces. Finally, we note that there is prior theoretical investigation of error estimates for structure-preserving approximations \cite{devore_degree_1974,beatson_degree_1978,beatson_restricted_1982,nochetto_positivity_2002}.

\an{
The formulation we consider in this paper constructs an optimization problem of the form
\begin{align}\label{eq:sip}
  \min_{\widehat{\bs{v}} \in \R^N} \left\| \bs{A} \widehat{\bs{v}} - \bs{b} \right\|_2^2, \hskip 10pt \textrm{such that} \hskip 10pt g(\widehat{v}, y) \leq 0 \;\forall y \in \Omega,
\end{align}
where $\bs{A}$ and $\bs{b}$ are a given matrix and vector (of appropriate sizes), and $g(\cdot, y)$ is a scalar-valued function depending on a parameter $y$ that takes values in an infinite set $\Omega$. Hence, our problem is a \emph{semi-infinite programming} (SIP) problem \cite{hettich_semi-infinite_1993} since the feasible set is described by an infinite number of constraints. As is well-known in SIP methods, even assessing feasibility of a candidate $\widehat{\bs{v}}$ would require certifying satisfaction of the constraints, i.e., certifying that the maximum of $g(\widehat{\bs{v}},\cdot)$ over all $\Omega$ is non-positive. Globally solving this so-called lower-level problem is typically the main challenge in SIP algorithms, and is frequently circumvented by means of either discretization approaches (that replace $\Omega$ by a finite set) or by local reduction approaches (which partition $\Omega$ into subdomains and use specialized approaches on each subdomain). In both cases, there is a discrete approximation of $\Omega$ that is constructed (and perhaps refined). For generating positive approximations, this would correspond to requiring positivity at only a finite set of points on the domain. 
}

\an{
Our formulation, upon discretization/division of $\Omega$, can certainly leverage SIP algorithms. However, our aim in this paper is to discuss the solution of this problem \emph{without} discretization of $\Omega$, and hence we do not rely on existing SIP algorithms. In particular, we propose algorithms to solve the original SIP problem that presume the ability to compute global solutions to the SIP lower-level problem. Thus, our algorithms differ from many existing SIP algorithms \cite{goberna_semi-infinite_2001,stein_how_2012}, but also inherit the general challenge that global solutions to lower-level SIP problems must be provided.
}

\section{Setup}\label{sec:setup}

Let $\Omega \subset \R^d$ be a spatial domain. Whereas our setup and theoretical results are valid for general $\Omega$ and $d \geq 1$, our numerical examples in this paper will be restricted to $d = 1$ with $\Omega = [-1,1]$. The restriction affects only algorithms and not the model or mathematical properties of our discussion. Consider a Hilbert space formed from scalar-valued functions over $\Omega$:
\begin{align*}
  H &= H(\Omega) \coloneqq \left\{ f: \Omega \rightarrow \R \; \big| \; \|f\| < \infty \right\}, & \| f\|^2 &\coloneqq \left\langle f, f \right\rangle,
\end{align*}
with $\langle \cdot, \cdot \rangle = \langle \cdot, \cdot \rangle_H$ the inner product on $H$. We are mostly concerned with ``standard" function spaces such as $L^2(\Omega)$ or Sobolev spaces\footnote{We formally define $L^2$ and some Sobolev spaces in Section \ref{sec:results}.}.
Let $V$ be an $N$-dimensional subspace of $H$, with $\{v_n\}_{n=1}^N$ a collection of orthonormal basis functions,
\begin{align*}
  V &= \mathrm{span}\left\{ v_1, \ldots, v_N \right\}, & \left\langle v_j, v_k \right\rangle = \delta_{jk},
\end{align*}
for $j, k = 1, \ldots, N$ and with $\delta_{jk}$ the Kronecker delta function. For example, if $H$ is $L^2(\Omega)$ with $\Omega = [-1,1]$ and $V$ is spanned by polynomials up to degree $N-1$, then one choice for the $v_j$ basis functions are orthonormal Legendre polynomials. We will consider this particular case as an example several times in this paper.

\subsection{Riesz representors}\label{ssec:riesz-rep}
We consider the dual $V^\ast$ of $V$, i.e., the space of all bounded linear functionals mapping $V$ to $\R$. The Riesz representation theorem guarantees that a functional $L \in V^\ast$ can be associated with a unique $V$-representor $\ell \in V$ satisfying
\begin{align*}
  L(u) &= \left\langle u, \ell \right\rangle, & \forall \;\; u \in V.
\end{align*}
Furthermore, this $L \leftrightarrow \ell$ identification is an isometry. We will use these facts in what follows. Given $L$ that identifies $\ell$, we consider the coordinates $\widehat{\ell}_j$ of $\ell$ in a $V$-orthonormal basis,
\begin{align*}
  \ell(x) &= \sum_{j=1}^N \widehat{\ell}_j v_j(x), & \widehat{\ell}_j &= \left\langle \ell, v_j \right\rangle = L(v_j).
\end{align*}
Then we have the following relations:
\begin{align*}
  \left\| L \right\|_{V^\ast} = \left\| \ell\right\|_V &= \| \bs{\widehat{\ell}} \|_2, & \bs{\widehat{\ell}} &= \left( \widehat{\ell}_1, \; \widehat{\ell}_2, \; \ldots, \; \widehat{\ell_N} \right)^T,
\end{align*}
where $\|\cdot\|_2$ is the Euclidean norm on vectors in $\R^N$. 

\subsection{Least squares problems}
We are interested in a common least squares-type approximation problem. Suppose that $u \in H$ is an unknown function for which we have $M$ pieces of data. We wish to construct an approximation $p \in V$ to $u$ that best matches these data points. We now formulate this abstractly. Let $\phi_1, \ldots, \phi_M$ be $M$ linear functionals on $H$ that are bounded on $V$. We assume that the observations $\left\{ u_j \right\}_{j=1}^M = \left\{ \phi_j(u) \right\}_{j=1}^M \subset \R$ are available to us (and also bounded), and we seek to solve the optimization problem,
\begin{align*}
  p = \argmin_{v \in V} \sum_{j=1}^M \left( \phi_j(v) - u_j \right)^2.
\end{align*}
For example, if $\phi_j$ is a point-evaluation (the Dirac mass) at some location $x_j \in \Omega$ for each $j=1, \ldots, M$, then the problem above is equivalent to
\begin{align*}
  p = \argmin_{v \in V} \sum_{j=1}^M \left( v(x_j) - u(x_j)\right)^2.
\end{align*}
This problem has a unique solution if the matrix $\bs{A} \in \R^{M \times N}$ with entries
\begin{align*}
  (A)_{m,n} &= \phi_m(v_n), & 1 \leq n \leq N, & 1 \leq m \leq M,
\end{align*}
has the rank equal to $\dim V = N$; otherwise, infinitely many solutions exist. This least squares problem is well understood and computational algorithms to solve it given data $\phi_j(u)$ are ubiquitous \cite{boyd_introduction_2018}. For an overdetermined system, where $M>N$, the method of ordinary least squares can be used to find a solution. Some details for this are discussed in \ref{sec:disc}.

\subsection{Constraints}
The previous section explains how a function $p$ can be constructed from data. However, we are interested in a particular kind of \textit{constrained} approximation.
Our investigation can be motivated by the following examples of types of constraints:
\begin{itemize}
  \item Positivity: $p(x) \geq 0$ for all $x \in \Omega$
  \item Monotonicity: $p'(x) \geq 0$ for all $x \in \Omega \subset \R$
  \item Boundedness: $0 \leq p(x) \leq 1$ for all $x \in \Omega$.
\end{itemize}
Thus, the central focus of this paper is solving a \textit{linearly constrained} least squares problem, where constraints of the above type are imposed. We now give the abstract setup of our constraints, which specializes to the examples above.

Our abstraction defines $K$ \textit{families} of linear constraints; for fixed $k \in \{1, 2, \ldots, K\}$, family $k$ is prescribed by the tuple $(L_k, r_k, \omega_k)$: 
\begin{itemize}
  \item $\omega_k$: a subset of $\Omega$.
  \item $r_k$: an element of $V$
  \item $L_k$: for $y \in \omega_k$ fixed, $L_k(\cdot,y)$ is a $y$-parameterized \textit{unit norm} element of $V^\ast$.
\end{itemize}
Our $k$th family of constraints on $v$ is 
\begin{align}\label{eq:Lk-def}
  L_k(v,y) &\leq L_k(r_k,y), & y &\in \omega_k.
\end{align}
The subset of $V$ that satisfies constraint family $k$ is
\begin{equation}\label{eq:constraints}
  E_k \coloneqq \left\{ v \in V \; \big| \; L_k( v, y) \leq L_k( r, y) \textrm{ for all } y \in \omega_k \right\}.
\end{equation}
The elements in $V$ that satisfy all $K$ families of constraints simultaneously are
\begin{align}\label{eq:E-def}
  E \coloneqq \cap_{k=1}^K E_k.
\end{align}
We assume that $E$ is nonempty, i.e., that the constraints are consistent. Constraints can be inconsistent, e.g., simultaneously enforcing $f(x) \leq 0$ and $f(x) \geq 1$. However, one can create more subtle inconsistencies in more complicated settings. Our procedure does not provide a means to detect inconsistent constraints (and in this case the algorithm will simply not converge). Thus, we rely on the user to ensure consistent constraints. (Note that a corresponding constrained problem has no solution if inconsistent constraints are prescribed.)

Particularly important later will be the formula for the $\{v_j\}_{j=1}^N$-coordinates of the Riesz representor of $L_k$. As in Section \ref{ssec:riesz-rep}, $L_k(\cdot,y)$ for fixed $(k,y)$ can be identified with its Riesz representor $\ell_k(\cdot, y) \in V$ and its corresponding expansion coefficients $\bs{\widehat{\ell}}_k(y)$. The unit norm condition of $L_k$ then implies

\begin{align}\label{eq:Lk-normalized}
  \left\| L_k(\cdot,y) \right\|_{V^\ast}^2 = \left\|\bs{\widehat{\ell}}_k(y) \right\|^2_2 = \sum_{j=1}^N \left(L_k(v_n,y)\right)^2 = 1.
\end{align}
We consider some examples.
\begin{example}[Positivity]\label{ex:positivity}
  Consider $\Omega = [-1,1]$, and let $V$ be any $N$-dimensional subspace of $L^2(\Omega) \cap L^\infty(\Omega)$. We seek to impose $p(x) \geq 0$ for all $x \in \Omega$. Thus, we have $K=1$, and the linear operator $L_1$ should be point evaluation, appropriately normalized. Note that point evaluation is not a bounded functional in $L^2$, but it is on the finite-dimensional space $V$. Formally, this is
  \begin{align*}
    L_1(v,y) &\coloneqq -\lambda(y) v(y), & v &\in V,
  \end{align*}
  where $\lambda(y)$ is chosen so that $L_1$ has unit norm for every $y \in \omega_k$; the negative sign is chosen so that we can reverse the inequality in \eqref{eq:Lk-def}. We set $\omega_k = \Omega$, and choose $r_k \equiv 0 \in V$. Then, the constraint \eqref{eq:Lk-def} is equivalent to $v(y) \geq 0$ for every $y \in \Omega$.
  The constraint set $E_1$ defined in \eqref{eq:constraints} is
  \begin{align*}
    E_1 = \left\{ u \in V \;\big|\; -u(y) \leq 0 \textrm{ for all } y \in \omega_1 \right\}.
  \end{align*}
  It will be useful here to also demonstrate how $\bs{\widehat{\ell}}_1(y)$ can be computed.
  For fixed $y$, we can identify $L_1(\cdot, y)$ via its Riesz representor $\ell_1(\cdot, y)$:
  \begin{align}\label{eq:Riesz-example}
    \ell_1(\cdot, y) &\coloneqq -\lambda(y) \sum_{j=1}^N v_j(y) v_j(\cdot) \in V, & 
    \lambda(y) &= \left[ \sum_{j=1}^N v^2_j(y)\right]^{-1/2}, 
  \end{align}
  so that $\left\{ -\lambda(y) v_j(y) \right\}_{j=1}^N$ are the entries of \an{$\bs{\widehat{\ell}}_1(y)$}. The formula for $\lambda$ results from the normalization condition \eqref{eq:Lk-normalized}. Thus, the coefficient vector $\bs{\widehat{\ell}}_1(y) \in \R^N$ has explicit entries in terms of $y$ and the orthonormal basis $\{v_j\}_{j=1}^N$.
\end{example}

\begin{example}[Monotonicity]
  With the same setup as the previous example, we take $V$ as any $N$-dimensional subspace of $L^2(\Omega) \cap W^{1,\infty}(\Omega)$, where $W^{1,\infty}(\Omega)$ is the Sobolev space of functions that are in $L^\infty(\Omega)$ and whose derivatives are also in $L^\infty(\Omega)$. Again with $K=1$, we define $L_1$ and its corresponding Riesz representor as 
  \begin{align*}
    L_1(v,y) &\coloneqq -\tau(y) v'(y), & v &\in V, \\
    \ell_1(\cdot, y) &\coloneqq -\sum_{n=1}^N \tau(y) v_n'(y) v_n \in V, & \tau(y) &= \left[ \sum_{j=1}^N \left(v_j'\right)^2(y) \right]^{-1/2},
  \end{align*}
  where again $\tau$ is determined using the normalization condition \eqref{eq:Lk-normalized}. With $r_1 \equiv 0$, then \eqref{eq:constraints} enforces $v'(y) \geq 0$ for all $y \in \Omega$.
\end{example}

\begin{example}[Boundedness]
  With the same setup as Example \ref{ex:positivity}, we take $V$ as any $N$-dimensional subspace of $L^2 \cap L^\infty$, and we further assume that $V$ contains constant functions. Let $K = 2$, and define the operators $L_1$ and $L_2$ as
  \begin{align*}
    L_1(v,y) &\coloneqq -v(y), & v &\in V, \\
    L_2(v,y) &\coloneqq v(y), & v &\in V,
  \end{align*}
  for each $y \in \omega_1 = \omega_2 = [-1,1]$. Then, with constraint functions $r_1 \equiv 0$ and $r_2 \equiv 1 \in V$, we have that $E_k$, $k = 1, 2$ are the sets
  \begin{align*}
    E_1 &= \left\{ u \in V \; \big|\; -u(y) \leq 0 \; \forall \; y \in [-1,1]\right\}, & 
    E_2 &= \left\{ u \in V \; \big|\; u(y) \leq 1\; \forall\; y \in [-1,1] \right\},
  \end{align*}
  so that their intersection $E$ in \eqref{eq:E-def} is the set of elements $u$ in $V$ such that $0 \leq u(x) \leq 1$ for each $x \in \Omega$.
\end{example}

\begin{example}
  We can also form constraints on different subsets of $\Omega$. With all the notation in the previous example, we change only:
  \begin{align*}
    \omega_1 &= [-1, 0), & \omega_2 &= (0, 1],
  \end{align*}
  so that $E$ contains functions $u$ satisfying $u(x) \geq 0$ for $x \in [-1, 0)$ and $u(x)\leq 1$ for $x \in (0, 1]$.
\end{example}

The above examples illustrate the generality of our notation and the intuitive simplicity of the types of constraints that we consider. A constrained version of a least squares problem thus is formulated as
\begin{align}\label{eq:constopt-continuous}
  p = \argmin_{v \in E} \sum_{j=1}^M \left( \phi_j(v) - u_j \right)^2.
\end{align}

\subsection{Problem discretization}\label{sec:disc}
We now formulate the constrained problem \eqref{eq:constopt-continuous} via coordinates in the basis $\{v_j\}_{j=1}^N$, which results in a discrete form amenable to numerical computation. Any $v \in V$ has the expansion
\begin{align*}
  v(x) = \sum_{j=1}^N \widehat{v}_j v_j(x),
\end{align*}
and the expansion coefficient vector $\bs{\hat{v}} \coloneqq \left( \hat{v}_1, \ldots, \hat{v}_N \right)^T \in \R^N$ uniquely identifies the element $v \in V$. 
This identification defines subsets of $\R^N$ corresponding to the sets $E_k$:
\begin{align}\label{eq:C-def}
  C_k &\coloneqq \left\{ \bs{c} \in \R^N \; \big| \; \sum_{j=1}^N c_j v_j \in E_k \right\} \subset \R^N, & C &\coloneqq \bigcap_{k=1}^K C_k.
\end{align}
Then, the optimization problem \eqref{eq:constopt-continuous} is equivalent to 
\begin{align}\label{eq:constopt-discrete}
  \bs{c} &= \argmin_{\bs{\widehat{v}} \in C} \left\| \bs{A} \bs{\widehat{v}} - \bs{b} \right\|^2_2, & b_j &= \phi_j(u).
\end{align}
This problem is again a least squares problem and so is easily solved in principle, but unfortunately in practice the set $C$ is a quite complicated subset of $\R^N$. Nevertheless, $C$ is convex, which is a fact we exploit.

If $\bs{A}$ has full column rank, then the \textit{unconstrained} solution to \eqref{eq:constopt-discrete} (i.e., setting $C = \R^N$) is given by the solution to the normal equations \cite{anton2013elementary},
\begin{align*}
\bs{\widehat{v}} = (\bs{A}^T \bs{A})^{-1}\bs{A}^T \bs{b}
\end{align*}

\subsection{Geometry of sets}
We recall some basic properties of cones and convex sets and functions that we utilize. In all the discussion below, the ambient space is $\R^N$. A set $C$ is convex if, for every $x, y \in \partial C$, 
\begin{equation*}
  \lambda x + (1-\lambda) y \in C \hskip 10pt \forall \;\; \lambda \in (0,1),
\end{equation*}
A set $C$ is a convex cone if, for every $x, y \in C$,
\begin{equation*}
  a x + b y \in C \hskip 10pt \forall \;\; a, b \geq 0.
\end{equation*}
The set $C$ is an affine convex cone if it is the rigid translate of a convex cone, i.e., if $C = D + z$, where $z \in \R^N$ and $D$ is a convex cone. In this case, we call $z$ the vertex of the cone.

The convex sets we consider are generated by an uncountably infinite number of supporting hyperplanes. Given $y \in \R^N$ and $a \in \R$, a hyperplane $H_0$ is a set given by $H_0 = \left\{ x \; \big|\; \left\langle x, y \right\rangle = a \right\}$. The hyperplane $H_0$ separates $\R^N$ into two halfspaces, one of which is 
\begin{equation*}
  H(y, a) \coloneqq \left\{ x \in \R^N \; \big|\; \left\langle x, y \right\rangle \leq a \right\}
\end{equation*}
Note that $H(y,a)$ is a closed set in $\R^N$. A hyperplane $H_0(y,a)$ with an associated halfspace $H(y,a)$ is a supporting hyperplane for a closed convex set $C$ if $C \subset H(y,a)$ and if $H_0(y,a) \cap \partial C \neq \emptyset$.

\section{Constrained optimization}\label{sec:method}
The main task in this paper is solving the optimization problem \eqref{eq:constopt-discrete}. This optimization problem appears simple since it features a quadratic objective, and the feasible set $C$ is convex (which we show in the next section). The main difficulty here is that $C$ is not a computationally simple convex set in $\R^N$, and hence computing, e.g., projections onto this set, is difficult. To begin, we establish that $C$ is convex.

\subsection{Constraint set properties}
This section is devoted to establishing that the sets $C_k$ and $C$ are convex cones in $\R^N$. These properties will be used in the construction of algorithms for solving \eqref{eq:constopt-discrete}. 

Before proceeding, we note that each inequality function $r_k \in V$ for $k = 1, \ldots, K$, can be translated into its vector of expansion coefficients:
\begin{align}\label{eq:rla-def}
  r_k(x) &= \sum_{j=1}^N \hat{r}_{k,j} v_j(x), & \bs{\hat{r}}_k = \left( \hat{r}_{k,1}, \ldots, \hat{r}_{k,N} \right)^T.
\end{align}
Now the definitions of $C$ and $C_k$ immediately yield convexity and conic properties of these sets.
\begin{theorem}\label{thm:C-convex}
  The set $C$ is a closed convex set in $\R^N$, and each for $k = 1, \ldots, K$, $C_k$ is a closed, affine convex cone in $\R^N$ with vertex located at $\bs{\hat{r}}_k$. 
\end{theorem}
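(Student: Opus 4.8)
The plan is to pass to coordinates and exhibit each $C_k$ as an (uncountable) intersection of closed halfspaces sharing a common vertex. The single ingredient needed is the coordinate expression of the functional $L_k$. Writing $v = \sum_{j=1}^N \widehat{v}_j v_j$ and using the Riesz representor $\ell_k(\cdot,y)$ with coordinate vector $\bs{\widehat{\ell}}_k(y)$ introduced in Section~\ref{ssec:riesz-rep}, orthonormality of $\{v_j\}_{j=1}^N$ yields
\[
  L_k(v,y) = \langle v, \ell_k(\cdot,y)\rangle = \sum_{j=1}^N \widehat{v}_j\, \widehat{\ell}_{k,j}(y) = \langle \bs{\widehat{v}}, \bs{\widehat{\ell}}_k(y)\rangle,
\]
and likewise $L_k(r_k,y) = \langle \bs{\widehat{r}}_k, \bs{\widehat{\ell}}_k(y)\rangle$. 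Consequently, combining \eqref{eq:Lk-def} with \eqref{eq:C-def}, membership $\bs{c} \in C_k$ is equivalent to
\[
  \langle \bs{c} - \bs{\widehat{r}}_k,\; \bs{\widehat{\ell}}_k(y)\rangle \leq 0 \qquad \text{for all } y \in \omega_k.
\]
(The unit-norm normalization \eqref{eq:Lk-normalized} plays no role in this theorem and may be ignored.)

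For each fixed $y \in \omega_k$, the set $\{\bs{c} : \langle \bs{c} - \bs{\widehat{r}}_k, \bs{\widehat{\ell}}_k(y)\rangle \leq 0\}$ is precisely the closed halfspace $H\bigl(\bs{\widehat{\ell}}_k(y),\, \langle \bs{\widehat{r}}_k, \bs{\widehat{\ell}}_k(y)\rangle\bigr)$ in the notation of Section~\ref{sec:setup}, and is therefore closed and convex. Since $C_k$ is the intersection of these halfspaces over all $y \in \omega_k$, and an arbitrary intersection of closed convex sets is again closed and convex, $C_k$ is closed and convex. I would emphasize that this conclusion requires no assumption on the cardinality of $\omega_k$ nor on the regularity of the map $y \mapsto \bs{\widehat{\ell}}_k(y)$.

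To obtain the affine-cone structure I would translate by the vertex. Set $D_k \coloneqq C_k - \bs{\widehat{r}}_k = \{\bs{d} : \langle \bs{d}, \bs{\widehat{\ell}}_k(y)\rangle \leq 0 \text{ for all } y \in \omega_k\}$. Given $\bs{d}_1, \bs{d}_2 \in D_k$ and scalars $a, b \geq 0$, bilinearity of the inner product gives
\[
  \langle a\bs{d}_1 + b\bs{d}_2,\; \bs{\widehat{\ell}}_k(y)\rangle = a\,\langle \bs{d}_1, \bs{\widehat{\ell}}_k(y)\rangle + b\,\langle \bs{d}_2, \bs{\widehat{\ell}}_k(y)\rangle \leq 0,
\]
each summand being a nonnegative multiple of a nonpositive number. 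Hence $a\bs{d}_1 + b\bs{d}_2 \in D_k$, so $D_k$ is a convex cone and $C_k = D_k + \bs{\widehat{r}}_k$ is an affine convex cone with vertex $\bs{\widehat{r}}_k$. Finally, $C = \bigcap_{k=1}^K C_k$ is a finite intersection of closed convex sets and is therefore closed and convex, completing the argument.

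I do not expect a genuine obstacle; the proof is elementary once the coordinate identity for $L_k$ is in hand. The one point I would flag as essential (rather than hard) is that the cone property hinges on the right-hand side of \eqref{eq:Lk-def} being $L_k(r_k,y) = \langle \bs{\widehat{r}}_k, \bs{\widehat{\ell}}_k(y)\rangle$, generated by the \emph{same} functional $L_k$: this is exactly what makes the translate $D_k = C_k - \bs{\widehat{r}}_k$ homogeneous, so that $\bs{0} \in D_k$ and $D_k$ is scale-invariant. I would also note that $C$ is in general only convex, not conic, since the $C_k$ carry distinct vertices $\bs{\widehat{r}}_k$; this is consistent with the statement, which claims a cone structure only for the individual $C_k$.
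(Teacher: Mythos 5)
Your proof is correct and takes essentially the same route as the paper's: both express $C_k$ as the intersection over $y \in \omega_k$ of the closed halfspaces $H\bigl(\bs{\widehat{\ell}}_k(y), L_k(r_k,y)\bigr)$ and derive closedness, convexity, and the conic structure from linearity of $L_k$. The only minor differences are that the paper carries out the convexity/cone verification in the function space $V$ (invoking the isometry with $\R^N$) while you work entirely in coordinates, and that you verify the cone property directly via nonnegative combinations $a\bs{d}_1 + b\bs{d}_2$ of elements of the translate $C_k - \bs{\widehat{r}}_k$, whereas the paper checks invariance of $E_k$ under the rays $r_k + \tau(v - r_k)$, $\tau \geq 0$ (your version in fact sidesteps a small slip in the paper, which writes ``$v \in V$'' where $v \in E_k$ is meant).
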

\begin{proof}
  Convexity, closure, and conic structure are preserved under isometries. Due to the isometric relation between $V$ and $\R^N$, we can thus prove properties in one space, which extends to the other space. We first show that $C_k$ is closed directly in $\R^N$: 
  Rewriting \eqref{eq:C-def} using the definition of $E_k$, we have
  \begin{align*}
    C_k 
        &= \bigcap_{y \in \omega_k} \left\{ \bs{c} \in \R^N \; \big| \; L_k\left(\sum_{j=1}^N c_j v_j, y\right) \leq L_k(r_k, y) \right\} \eqqcolon \bigcap_{y \in \omega_k} c_k(y).
  \end{align*}
  By definition, $c_k(y)$ is actually a halfspace in $\R^N$,
  \begin{align*}
    c_k(y) = H\left(\bs{\widehat{\ell}}_k(y), L_k(r_k,y) \right)
  \end{align*}
  and hence $c_k(y)$ is a closed set. Therefore, $C_k = \cap_y c_k(y)$ is also a closed set, and thus $C = \cap_k C_k$ is a closed set.\\
  We will now show the convexity and conic properties in $V$: fix $k \in \{1, \ldots, K\}$ and $y \in \omega_k$. Let $v, w \in V$ be two elements in $E_k$. 
  For any $\lambda \in [0, 1]$, 
  \begin{align*}
    L_k(\lambda v + (1-\lambda) w, y) = \lambda L_k(v, y) + (1-\lambda) L_k(w, y) \leq L_k(r_k,y),
  \end{align*}
  where the inequality is true since $v, w \in E_k$. Therefore $E_k$, and hence $C_k$, is convex. Thus we also have that $C$ is convex since it's an intersection of convex sets.
  
  We next show that $E_k$ is a cone with the vertex at $r_k$, i.e., we must show that for any $\tau \geq 0$ and $v \in V$, we have $L_k(r_k + \tau( v - r_k)) \leq L_k(r_k)$. This is true since
  \begin{align*}
    L_k(r_k + \tau( v - r_k)) = L_k(r_k) + \tau \left[ L_k(v) - L_k(r_k)\right] \leq L_k(r_k),
  \end{align*}
  so indeed, $E_k$ is a convex cone with the vertex at $r_k$, and hence $C_k$ is a convex cone with the vertex at $\bs{\widehat{r}}_k$.
\end{proof}

Despite their conic convexity, the sets $C_k$ are not polyhedral in general, and are hence ``complicated" to computationally encode.
Consider the setup of Example \ref{ex:positivity}. If we change the definition of $\omega_1$ to 
\begin{align*}
  \widetilde{\omega}_1 = \{x_1, \ldots, x_P \} \subset \Omega = [-1,1].
\end{align*}
for any arbitrary $P < \infty$, the new constraint set $\widetilde{C}_1 = C(L_1, r_1, \widetilde{\omega}_1)$ is strictly larger than the constraint set $C_1$ in Example \ref{ex:positivity}. In particular, $p \in V$ satisfying $p(x_j) \geq 0$ for $j = 1, \ldots, P$ does not imply that $p(x) \geq 0$ for all $x \in [-1,1]$ unless $V$ has very special properties (for example, if $V$ contains only certain piecewise constant functions). Note that the supporting hyperplanes of the constraint set $\widetilde{C}$ are $P < \infty$ halfspaces in $\R^N$ and hence $\widetilde{C}$ is polyhedral (if nonempty). However, if $V$ contains polynomials, it is easy to construct a polynomial that is non-negative on $\widetilde{\omega}_1$ but \textit{not} non-negative on $\Omega$. Hence, the constraint set $C_1$ defined by $(L_1, r_1, \omega)$ in example \ref{ex:positivity} is strictly smaller than $\widetilde{C}_1$, here defined by $(L_1, r_1, \widetilde{\omega})$. 

\an{Nevertheless, such discretization approaches, i.e. approaches that use a finite set $\widetilde{\omega}_1$ as a surrogate for an infinite set $\Omega$, are common and frequently effective algorithms for solving \eqref{eq:constopt-discrete}, as is commonly done in semi-infinite programming problems. However, in this manuscript we present algorithms that insist on global satisfaction of the constraints, and hence adopt alternative approaches.}
Thus, the main computational difficulty of our optimization problem is that the set $C$ cannot be exactly represented as a polyhedron in general, and in particular that projections onto $C$ are in general difficult to compute.

\subsection{Solutions to \eqref{eq:constopt-discrete}}
Our main goal in this section is to demonstrate the unique solution to our constrained optimization problem. The result is straightforward from the closed convexity of the constraint set and strict convexity of the objective function.
\begin{theorem}\label{thm:solution}
  Assume that the design matrix $\bs{A}$ has rank $N$ and the feasible set $C$ is nonempty. Then, the constrained optimization problem \eqref{eq:constopt-discrete} has a unique solution.
\end{theorem}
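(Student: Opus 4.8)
The plan is to combine the two facts that the hypotheses hand us: by Theorem~\ref{thm:C-convex} the feasible set $C$ is closed and convex, and it is nonempty by assumption; meanwhile the full column rank of $\bs{A}$ will force the objective $f(\bs{\widehat{v}}) \coloneqq \|\bs{A}\bs{\widehat{v}} - \bs{b}\|_2^2$ to be both strictly convex and coercive. Existence of a minimizer then follows from a Weierstrass (compactness) argument, and uniqueness is immediate from strict convexity together with the convexity of $C$. The bulk of the work is verifying the two analytic properties of $f$, and both are elementary consequences of the rank hypothesis.

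First I would analyze the objective. Since $\bs{A}$ has full column rank $N$, the Gram matrix $\bs{A}^T \bs{A}$ is symmetric positive definite; let $\mu > 0$ denote its smallest eigenvalue, so that $\|\bs{A}\bs{x}\|_2^2 \geq \mu \|\bs{x}\|_2^2$ for all $\bs{x} \in \R^N$. Expanding the square yields the identity $f(\lambda \bs{x} + (1-\lambda)\bs{y}) = \lambda f(\bs{x}) + (1-\lambda) f(\bs{y}) - \lambda(1-\lambda)\|\bs{A}(\bs{x}-\bs{y})\|_2^2$ for $\lambda \in (0,1)$; because $\bs{A}(\bs{x}-\bs{y}) \neq \bs{0}$ whenever $\bs{x} \neq \bs{y}$, the subtracted term is strictly positive, which is exactly strict convexity of $f$. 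The same estimate also gives coercivity: by the reverse triangle inequality, $\|\bs{A}\bs{\widehat{v}} - \bs{b}\|_2 \geq \sqrt{\mu}\,\|\bs{\widehat{v}}\|_2 - \|\bs{b}\|_2 \to \infty$ as $\|\bs{\widehat{v}}\|_2 \to \infty$, so every sublevel set of $f$ is bounded.

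With these in hand, existence follows by restricting to a compact set. Choosing any $\bs{c}_0 \in C$ and setting $\alpha = f(\bs{c}_0)$, the set $S \coloneqq C \cap \{\bs{\widehat{v}} \in \R^N : f(\bs{\widehat{v}}) \leq \alpha\}$ is closed (an intersection of closed sets, using Theorem~\ref{thm:C-convex} and continuity of $f$), bounded (by coercivity), and nonempty (it contains $\bs{c}_0$), hence compact; since any minimizer of $f$ over $C$ must lie in $S$, the continuous function $f$ attains its infimum over $S$, and therefore over $C$. For uniqueness, suppose $\bs{c}_1 \neq \bs{c}_2$ are both minimizers with common optimal value $m$. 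Their midpoint lies in $C$ by convexity, and strict convexity gives $f\big(\tfrac{1}{2}(\bs{c}_1 + \bs{c}_2)\big) < \tfrac{1}{2}(f(\bs{c}_1) + f(\bs{c}_2)) = m$, contradicting optimality; hence the minimizer is unique.

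The only genuine subtlety, and the step I would be most careful about, is existence: because $C$ need not be bounded, compactness is not automatic and must be recovered from coercivity of the objective, which is precisely where the rank-$N$ assumption on $\bs{A}$ enters. Everything else (strict convexity, the convexity/closedness of $C$ from the previous theorem, and the uniqueness argument) is routine.
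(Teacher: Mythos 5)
Your proof is correct, but it follows a genuinely different route from the paper's. The paper factors $\bs{A} = \bs{U}\bs{\Sigma}\bs{V}^\ast$ (reduced SVD, with $\bs{\Sigma}$ invertible thanks to the rank-$N$ hypothesis) and rewrites \eqref{eq:constopt-discrete} as the problem of projecting the point $\bs{U}^\ast \bs{b}$ onto the transformed set $\bs{\Sigma}\bs{V}^\ast C$, which is closed and convex because it is an invertible linear image of the closed convex set $C$ from Theorem~\ref{thm:C-convex}; existence and uniqueness then follow in one stroke from the Hilbert Projection Theorem. You instead work directly in the original coordinates: full column rank gives strict convexity of the objective via the identity $f(\lambda \bs{x} + (1-\lambda)\bs{y}) = \lambda f(\bs{x}) + (1-\lambda)f(\bs{y}) - \lambda(1-\lambda)\|\bs{A}(\bs{x}-\bs{y})\|_2^2$ and coercivity via $\|\bs{A}\bs{x}\|_2^2 \geq \mu\|\bs{x}\|_2^2$, and then a Weierstrass argument on the compact set $C \cap \{f \leq f(\bs{c}_0)\}$ yields existence, with uniqueness from the midpoint argument. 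Both arguments are sound; your correct observation that coercivity is the crucial step (since $C$ is an unbounded cone) is precisely what the paper delegates to the projection theorem. What your approach buys is self-containedness and elementarity: no SVD, no appeal to an external theorem, and no need to check that a linear image of a closed set stays closed (a point the paper passes over quickly, and which genuinely requires invertibility of $\bs{\Sigma}\bs{V}^\ast$, as linear images of closed sets need not be closed in general). What the paper's approach buys is reuse: the transformed problem \eqref{eq:transformed-opt} is not merely a proof device but is exactly the formulation the algorithms of Section~\ref{ssec:AnotI} operate on when $\bs{A} \neq \bs{I}$, so the proof and the computational pipeline share the same change of coordinates; the projection-theorem argument would also carry over verbatim to infinite-dimensional Hilbert settings where your compactness argument would fail.
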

\begin{proof}
  The first step is to observe that since $\bs{A}$ has full column rank, we can write the problem in transformed coordinates as a convex feasibility problem (specifically as a projection problem).
  Let $\bs{A} = \bs{U} \bs{\Sigma} \bs{V}^\ast$ be the \textit{reduced} singular value decomposition of $\bs{A}$. Since $\mathrm{rank}(\bs{A}) = N \leq M$, $\Sigma$ is $N \times N$, diagonal, and invertible; $\bs{V}$ is $N \times N$ and orthogonal; and $\bs{U}$ is $M \times N$ with orthonormal columns.

  With $\mathcal{P}_{\mathcal{W}}$ the $\R^N$-orthogonal projector onto a subspace $\mathcal{W}$, and $\mathcal{R}(\bs{A})$ the range of $\bs{A}$, then \eqref{eq:constopt-discrete} can be written as
  \begin{align}\nonumber
    \argmin_{\bs{\widehat{v}} \in C} \left\| \bs{A} \bs{\widehat{v}} - \bs{b} \right\|^2_2 &=
    \argmin_{\bs{\widehat{v}} \in C} \left\| \mathcal{P}_{\mathcal{R}(\bs{A})^\perp} \bs{b} \right\|_2^2 + \left\| \bs{A} \bs{\widehat{v}} - \mathcal{P}_{\mathcal{R}(\bs{A})} \bs{b} \right\|^2_2 \\\nonumber
    &= \argmin_{\bs{\widehat{v}} \in C} \left\| \bs{\Sigma} \bs{V}^\ast \bs{\widehat{v}} - \bs{U}^\ast \bs{b} \right\|^2_2 \\\label{eq:transformed-opt-full}
    &= \bs{V} \bs{\Sigma}^{-1} \argmin_{\bs{z} \in \bs{\Sigma} \bs{V}^\ast C} \left\| \bs{z} - \bs{U}^\ast \bs{b} \right\|^2_2,
  \end{align}
  where $\bs{\Sigma} \bs{V}^\ast C \coloneqq \left\{ \bs{\Sigma} \bs{V}^\ast \bs{y} \in \R^N \; \big| \; \bs{y} \in C \right\}$.
  Thus, \eqref{eq:constopt-discrete} has a unique solution if and only if 
  \begin{align}\label{eq:transformed-opt}
    \argmin_{\bs{z} \in \bs{\Sigma} \bs{V}^\ast C} \left\| \bs{z} - \bs{U}^\ast \bs{b} \right\|^2_2
  \end{align}
  has a unique solution. 
  Theorem \ref{thm:C-convex} establishes that $C$ is closed and convex; thus, $\bs{\Sigma} \bs{V}^\ast C$ is a linear transformation of a closed convex set, so it is also closed and convex. Therefore, \eqref{eq:transformed-opt} seeks the $\ell^2(\R^N)$-closest point to $\bs{U}^\ast \bs{b}$ from a nonempty, closed, convex set. The Hilbert Projection Theorem guarantees the existence and uniqueness of such a point.
\end{proof}
The study of existence and uniqueness of approximations under convex constraints is not new \cite{rice_approximation_1963,lewis_approximation_1973}. Indeed, our result is a corollary of these earlier results, but we have presented a brief proof above in order to be self-contained.

\section{Algorithms: Convex Feasibility}\label{sec:algorithms}
We now concentrate on solving the problem defined by \eqref{eq:constopt-discrete}, equivalently \eqref{eq:transformed-opt}. To simplify the presentation, we will assume first that $\bs{A} = \bs{I}$ so that both \eqref{eq:transformed-opt} and \eqref{eq:constopt-discrete} reduce to
\begin{align}\label{eq:reduced-discrete}
  \argmin_{\bs{c} \in C} \left\| \bs{c} - \bs{b} \right\|^2_2,
\end{align}
i.e., a standard problem of projecting $\bs{b}$ onto a convex set $C$. The main bottleneck to applying standard optimization tools is that the feasible set $C$ is not easily defined in terms of a finite number of conditions on $\bs{c}$. The difficulty in our problem is not in minimizing the objective function, but instead the convex feasibility problem, i.e., to identify points in the convex feasible set. 

Some of the most successful algorithms for solving the convex feasibility problem are alternating- or splitting-type algorithms. If $C_1, \ldots, C_r$ are convex sets with non-empty intersection $C$, these algorithms assume that projection onto any one of these sets is computationally feasible. A solution to \eqref{eq:reduced-discrete} can be computed by alternating these individual projections. The original projection onto convex sets algorithm via iteration is due to Von Neumann \cite{von_neumann_functional_1951}, and much work has proceeded from this \cite{bregman_method_1965,gubin_method_1967,bauschke_projection_1996,deutsch_rate_2006,lewis_local_2009,deutsch_best_2012}. When $r > 2$, the alternating algorithm becomes a cyclic one, and these cyclic projection algorithms have substantial theoretical underpinning, including convergence guarantees.

The difficulty in applying these algorithms to our situation is that they characterize the feasible region with a \textit{finite} number of convex sets. Although our collection of sets $\{C_j\}_{j=1}^K$ is finite, we do not know how to project onto any of them individually. However, we have 
\begin{align}\label{eq:C-decomposition}
  C &= \bigcap_{k=1}^K C_k = \bigcap_{k=1}^K \bigcap_{y \in \omega_k} H_k(y), \\\nonumber
  H_k(y) &\coloneqq H\left(\bs{\ell_k}(y), L_k(r_k, y) \right),
\end{align}
so that $C$ is comprised of an (in general uncountably) \textit{infinite} intersection of half-spaces, each of which is straightforward to project onto, see \cref{fig:intersectingplanes} for a geometric visual. Our strategy here is to generalize certain types of cyclic/alternating algorithms to the case of an infinite number of convex sets (halfspaces). We broadly employ two strategies: greedy projection and averaged projection.

\begin{figure}[htbp]
  \begin{center}
    \resizebox{\textwidth}{!}{
      \includegraphics[width=0.33\textwidth]{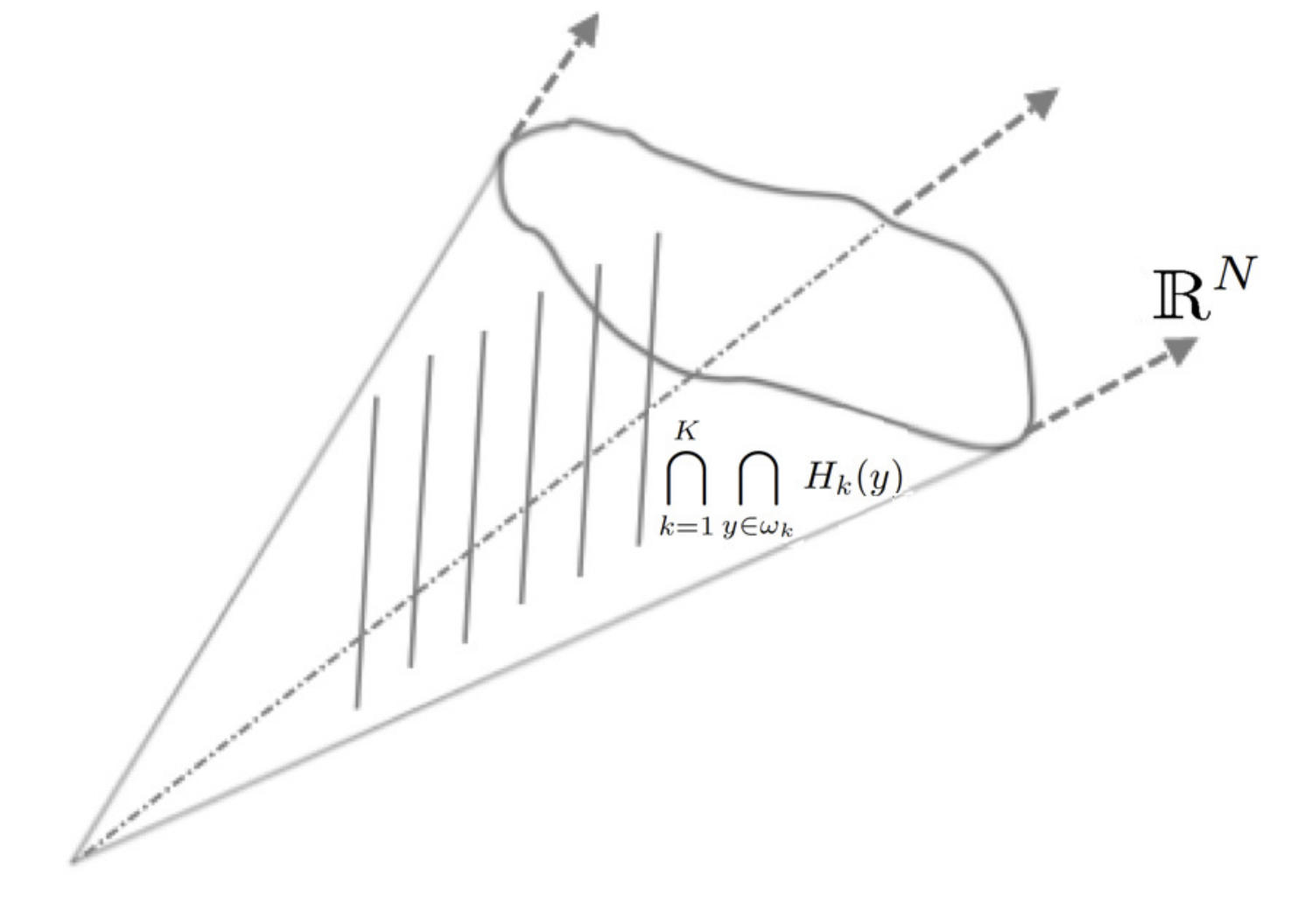}
      \includegraphics[width=0.33\textwidth]{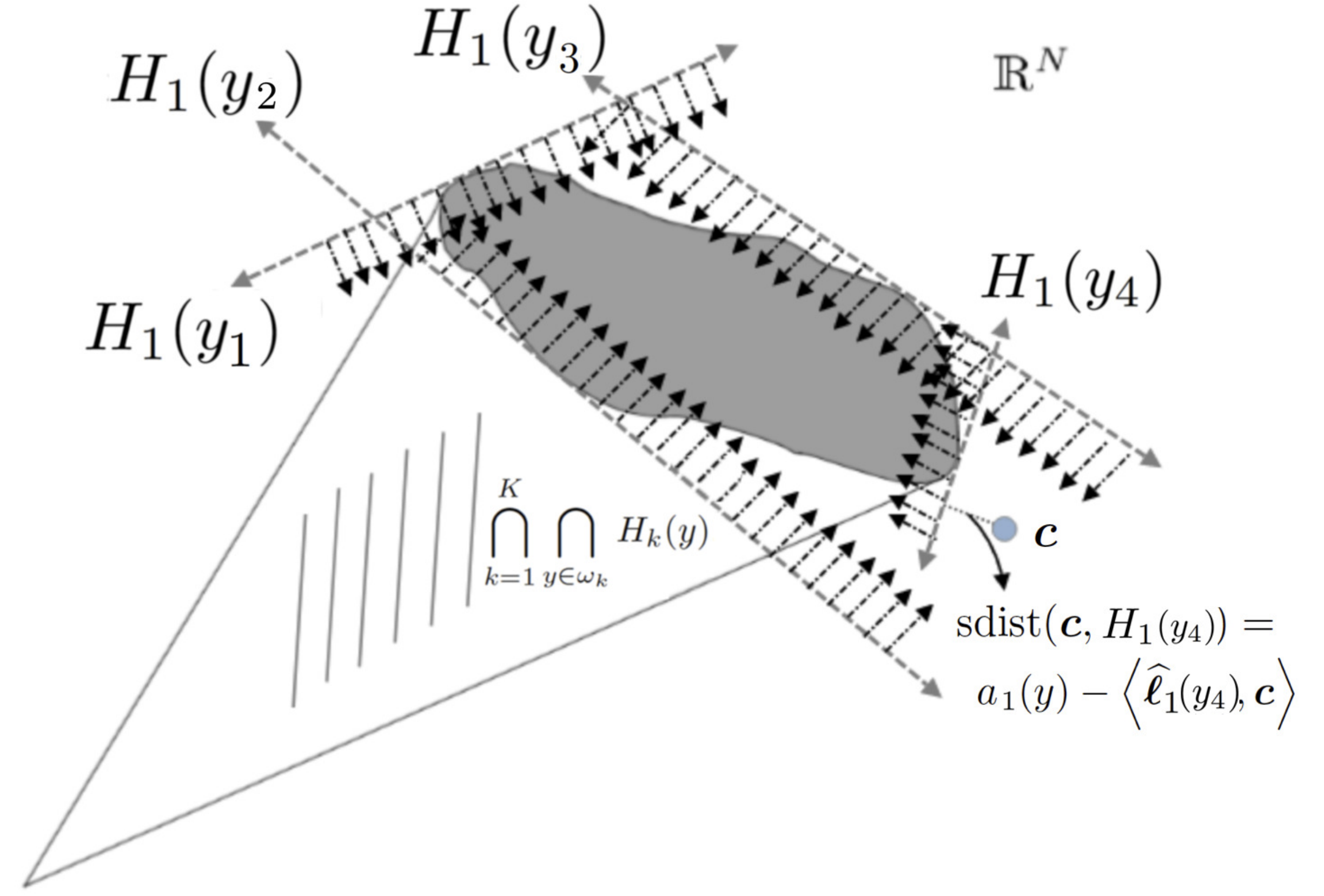}
      \includegraphics[width=0.33\textwidth]{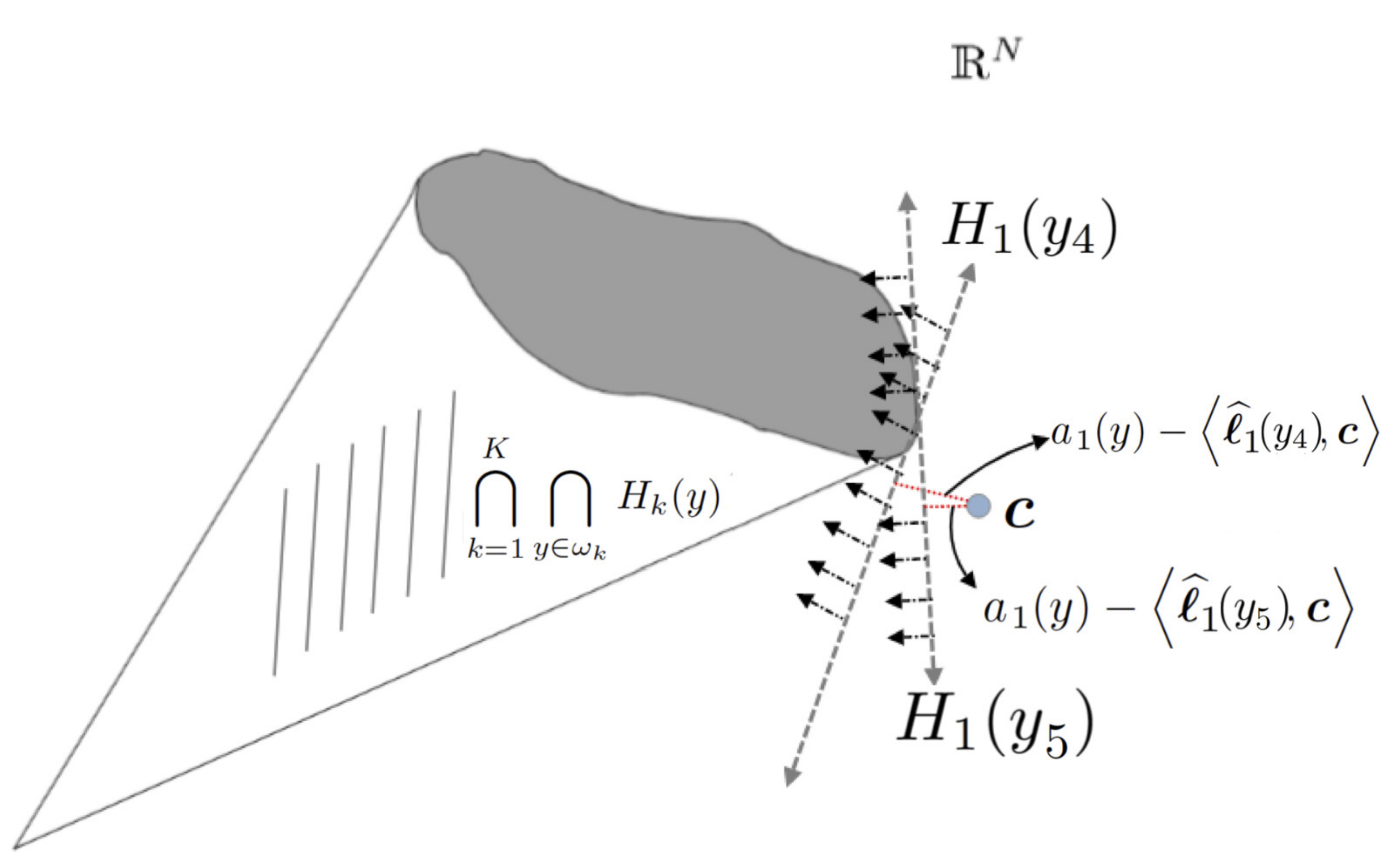}
    }
  \end{center}
 \caption{
   Left: The hatched volume represents the closed convex cone $C_1$
   Middle: Geometric depiction of intersecting hyperspaces $H_1(y)$ and their respective boundaries defined by hyperplanes parameterized by $y \in \Omega$. Also shown is the distance calculation corresponding to \eqref{eq:sdist}. 
   Right: A scenario that demonstrates the greedy strategy to select the direction in which $y$ moves in the next step of the algorithm: $H_1(y_4)$ is farther away from $\bs{c}$ than $H_1(y_5)$. The optimization \eqref{eq:global-minimization} seeks the hyperplane that is farthest away from $\bs{c}$.} \label{fig:intersectingplanes}
\end{figure}


The major ingredient in our approaches is the ability to project onto any halfspace $H_k(y)$. Since the functionals $L_k(\cdot,y)$ are unit norm, a computation shows that the signed distance between some point $\bs{c} \in \R^N$ and $H_k(y)$ is 
\begin{align}\label{eq:sdist}
  \mathrm{sdist}(\bs{c}, H_k(y)) = L_k(r_k, y) - \left\langle \bs{\widehat{\ell}}_k(y), \bs{c} \right\rangle,
\end{align}
which is positive if $\bs{c} \in H_k(y)$ and negative otherwise. Thus, the nearest-distance projection of $\bs{c}$ onto $H_k(y)$ is 
\begin{align*}
  P_{H_k(y)} \bs{c} = \bs{c} + \bs{\ell}_k(y) \min\left\{0, \mathrm{sdist}(\bs{c}, H_k(y)) \right\}.
\end{align*}
We consider an example to illustrate that these projections are easily computable.

\begin{example}\label{eq:positivity-lambda}
  Consider the positivity constraint setup of Example \ref{ex:positivity}. The constraint functional $L_1(\cdot,y)$ is a (normalized, negative) point evaluation at $y$, and $\{v_n\}_{n=1}^N$ are the first $N$ \textit{orthonormal} Legendre polynomials on $[-1,1]$. Then, the Riesz representor $\ell_1(y) \in V$ and its coordinates $\{\widehat{\ell}_{1,j}(y) \}_{j=1}^N$ are explicit in terms of the Legendre polynomials via \eqref{eq:Riesz-example}.
  In the context of harmonic analysis, $\ell_1(y)$ is the $y$-centered, negative, normalized Dirichlet kernel for $V$.
  The function $r_1$ describing the constraint is $r_1 \equiv 0$, so that $\widehat{\bs{r}}_1 = \bs{0}$ and $L_1(r_1,y) = 0$.
  Now let $v \in V$ be any element with coordinates $\bs{c} \in \R^N$ in the orthonormal Legendre polynomials. Then, 
  \begin{align}\label{eq:sdist-1d}
    \mathrm{sdist}(\bs{c}, H_1(y)) = -\left\langle \bs{\widehat{\ell}}_1(y), \bs{c} \right\rangle = \lambda(y) v(y).
  \end{align}
  Thus, the signed distance at $y \in \Omega$ is simply scaled evaluation of the original function $v$. The projection of $\bs{c}$ onto the halfspace defined by $H_k(y)$ is therefore
  \begin{align*}
    P_{H_k(y)} \bs{c} = \bs{c} + \bs{\widehat{\ell}}_1(y) \min\left\{0, v(y) \lambda(y) \right\}.
  \end{align*}
  Note that since $\lambda(y) > 0$, this projection equals $\bs{c}$ if $v(y) \geq 0$, as expected.
\end{example}

\subsection{Greedy projections}\label{ssec:alg-greedy}
Since projections onto individual halfspaces defined by $H_k(y)$ are relatively simple to compute, we can devise one algorithm for computing the solution to \eqref{eq:reduced-discrete} as a modification of cyclic projections. Although cyclic projection-type algorithms proceed by cycling through the enumerable constraint sets, our (uncountably) infinite collection of sets prevents such a simple cycling. Instead, we can project onto the \textit{farthest} or most violated constraint, i.e., with 
\begin{align}\label{eq:global-minimization}
  (y^\ast, k^\ast) \coloneqq \argmin_{y\in \omega_k, k \in [K]} \mathrm{sdist}(\bs{c}, H_k(y)),
\end{align}
We can update $\bs{c}$ via
\begin{align}\label{eq:greedy-update}
  \bs{c} \gets \bs{c} + \bs{\ell}_{k^\ast}(y^\ast) \min\left\{0, \mathrm{sdist}(\bs{c}, H_{k^\ast}(y^\ast)) \right\}.
\end{align}
The geometric picture associated to \eqref{eq:global-minimization} is shown in the right panel of Figure \ref{fig:intersectingplanes}.
The update process \eqref{eq:greedy-update} can be repeated, resulting in an iterative algorithm. We summarize this procedure in Algorithm \ref{alg:greedy}. This algorithm proceeds by iteratively ``correcting" the vector $\bs{c}$ in \eqref{eq:greedy-update}. The associated operation in the function space $V$ is that an unconstrained function is additively augmented by the Riesz representor correction function $\ell_{k^\ast}(y^\ast) \in V$. These corrections are visualized in Figure \ref{fig:corrections} for polynomials. A more detailed understanding of these function is provided in Figures \ref{fig:surf5} and \ref{fig:surf30} where we show $\ell_k(y)(x)$ as a function of $(x,y)$ for polynomials.

\begin{algorithm}[H]
  \caption{Iterative greedy projection algorithm to compute the solution to \eqref{eq:reduced-discrete}. The unspecified ``extra termination criteria" can be standard metrics, such as number of iterations, improvement in objective function, etc.}
\label{alg:greedy}
\begin{algorithmic}[1]
\STATE{Input: constraints $(L_k, r_k, \omega_k)_{k=1}^K$}
\STATE{Input: coordinates $\bs{c} \in \R^N$ of a function $v \in V$}
\WHILE{True}
  \STATE{Compute $(y^\ast, k^\ast)$ via \eqref{eq:global-minimization}.}\label{alg:greedy:minimization}
  \IF{$\mathrm{sdist}(\bs{c}, H_{k^\ast}(y^\ast)) \geq 0$ or extra termination criteria triggered}\label{alg:greedy:termination}
    \STATE{Break}
  \ENDIF
  \STATE\label{alg:greedy:update}{Update $\bs{c}$ via \eqref{eq:greedy-update}.}
\ENDWHILE
\RETURN $\bs{c}$
\end{algorithmic}
\end{algorithm}

Note that the bulk of the computational effort in Algorithm \ref{alg:greedy} corresponds to line \ref{alg:greedy:minimization} where the $\Omega$-global optimization problem \eqref{eq:global-minimization} must be solved, which can be of considerable expense at each iteration. We explain in Appendix \ref{app:poly-methods} how we accomplish this optimization for univariate polynomial spaces $V$. 

It is straightforward to establish that under a special kind of termination in Algorithm \ref{alg:greedy:update}, we obtain the solution to \eqref{eq:reduced-discrete}.
\begin{proposition}
  If Algorithm \ref{alg:greedy}, without any extra termination criteria, terminates after one only iteration of line \ref{alg:greedy:update}, then the output $\bs{c}$ is the solution to \eqref{eq:reduced-discrete}.
\end{proposition}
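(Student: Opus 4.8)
The plan is to unwind the single update and then invoke the elementary geometry of projecting onto a superset. In the setting of \eqref{eq:reduced-discrete} the input coordinates are $\bs{b}$, the point we wish to project. Let $(y^\ast, k^\ast)$ be the minimizer returned by \eqref{eq:global-minimization} on the first pass through the loop. Since the run executes line \ref{alg:greedy:update} exactly once, the termination test on line \ref{alg:greedy:termination} must have failed on that first pass, so $\mathrm{sdist}(\bs{b}, H_{k^\ast}(y^\ast)) < 0$ and the update \eqref{eq:greedy-update} yields
\begin{align*}
  \bs{c}_1 = \bs{b} + \bs{\ell}_{k^\ast}(y^\ast)\,\mathrm{sdist}\!\left(\bs{b}, H_{k^\ast}(y^\ast)\right).
\end{align*}
By the nearest-distance projection formula onto a single halfspace stated earlier in this section, this is exactly $\bs{c}_1 = P_{H_{k^\ast}(y^\ast)}(\bs{b})$, the closest point of the halfspace $H_{k^\ast}(y^\ast)$ to $\bs{b}$.

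Next I would read off feasibility from the termination. On the second pass the loop halts, and because no extra criteria are active, the only available exit is $\mathrm{sdist}(\bs{c}_1, H_{k^{\ast\ast}}(y^{\ast\ast})) \geq 0$, where $(y^{\ast\ast}, k^{\ast\ast})$ is the minimizer of \eqref{eq:global-minimization} recomputed at $\bs{c}_1$. Since \eqref{eq:global-minimization} minimizes the signed distance globally over all $k \in [K]$ and all $y \in \omega_k$, a non-negative minimum forces $\mathrm{sdist}(\bs{c}_1, H_k(y)) \geq 0$, i.e. $\bs{c}_1 \in H_k(y)$, for every constraint $(k,y)$. By the decomposition \eqref{eq:C-decomposition} this is precisely the statement $\bs{c}_1 \in C$.

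Finally I would combine the two facts. Because $H_{k^\ast}(y^\ast)$ is one of the halfspaces whose intersection forms $C$ in \eqref{eq:C-decomposition}, we have $C \subseteq H_{k^\ast}(y^\ast)$. As $\bs{c}_1$ is the point of $H_{k^\ast}(y^\ast)$ nearest to $\bs{b}$, every $\bs{x} \in C \subseteq H_{k^\ast}(y^\ast)$ obeys $\|\bs{b} - \bs{c}_1\|_2 \leq \|\bs{b} - \bs{x}\|_2$; together with $\bs{c}_1 \in C$, this shows $\bs{c}_1$ minimizes $\|\bs{b} - \cdot\|_2$ over $C$. Hence $\bs{c}_1$ solves \eqref{eq:reduced-discrete}, and uniqueness follows from Theorem \ref{thm:solution}.

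The argument is short, and the only step needing genuine care is the second one: the termination condition must be read as a \emph{global} feasibility certificate. This is legitimate precisely because \eqref{eq:global-minimization} searches the entire (uncountable) constraint family rather than a finite surrogate, so a non-negative global minimum of the signed distance certifies membership in the full intersection $C$. Granting that, the conclusion reduces to the classical observation that projecting onto a convex superset and landing inside the subset produces the projection onto the subset, requiring no further estimates.
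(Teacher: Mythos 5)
Your proof is correct and follows essentially the same route as the paper's: identify the single update as the nearest-point projection onto the most-violated halfspace $H_{k^\ast}(y^\ast)$, read the termination test as a global feasibility certificate giving $\bs{c}_1 \in C$, and conclude via $C \subseteq H_{k^\ast}(y^\ast)$ that the distance lower bound is attained. The only cosmetic difference is that the paper phrases the last step as a chain of distance inequalities collapsing to equality, while you invoke the equivalent superset-projection observation directly; your write-up is, if anything, slightly more explicit about why termination certifies membership in all of $C$.
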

\begin{proof}
  Assume without loss that the input to algorithm \ref{alg:greedy} \an{$\bs{c}$ is not in $ C$}. By \eqref{eq:C-decomposition}, we have
  \begin{align*}
    \mathrm{dist} \left( \bs{c}, C \right) \geq \mathrm{dist} \left( \bs{c}, H_k(y) \right),
  \end{align*}
  for any $(y,k)$. Let $(y^\ast, k^\ast)$ be the solution to \eqref{eq:global-minimization}, and note that since $\bs{c} \not\in C$, 
  \begin{align*}
    \mathrm{dist} \left( \bs{c}, H_k(y) \right) = -\mathrm{sdist}(\bs{c}, H_{k^\ast}(y^\ast)) > 0.
  \end{align*}
  The assumption that Algorithm \ref{alg:greedy} terminates after one iteration implies that 
  \begin{align*}
    \bs{d} \coloneqq \bs{c} + \bs{\widehat{\ell}}_{k^\ast}(y^\ast) \mathrm{sdist}(\bs{c}, H_{k^\ast}(y^\ast)) \in C.
  \end{align*}
  Note $\bs{d}$ is returned by the algorithm. $\bs{c} \not\in C$, $\bs{d} \in C$, $\left\| \bs{\widehat{\ell}}_{k^\ast}(y^\ast) \right\|_2 = 1$, and that 
  \begin{align*}
    \mathrm{dist}\left(\bs{c}, C\right) \geq -\mathrm{sdist}(\bs{c}, H_{k^\ast}(y^\ast)),
  \end{align*}
  all imply that the above inequality is actually an equality, and thus $\bs{d}$ solves \eqref{eq:reduced-discrete}.
\end{proof}

In standard cyclic projection algorithms, it is well known that directly projecting onto each set in each iteration produces a suboptimal trajectory for the iterates. The greedy algorithm described in this section suffers from this as well, which we show in the numerical results section. An improvement that somewhat ameliorates this deficiency is accomplished by averaging these projections.

\subsection{Averaged projections}\label{ssec:alg-averaged}
A simple strategy to mitigate the oscillatory iteration trajectory produced by iterative greedy projections is via averaging. Precisely, given a current iterate $\bs{c}$, we identify the subset of $\Omega$ where our constraints are violated:
\begin{align}\label{eq:averaging-set}
  \omega_k^- &\coloneqq \left\{ y \in \omega_k \; \big|\; \mathrm{sdist}(\bs{c}, H_k(y)) < 0 \right\}.
\end{align}
Under mild assumptions on $V$, e.g., that it contains only piecewise continuous functions, $\omega_k^-$ is either the trivial (empty) set, or of positive Lebesgue measure. (In other words, it cannot be a discrete or nontrivial measure-0 set.) Assume for simplicity that $\omega_k^-$ has a positive Lebesgue measure for each $k$. We then produce an update by a normalized average of corrections corresponding to values of $y$ in $\omega_k^-$:
\begin{align}\label{eq:c-update-averaged}
  \bs{c} \gets \bs{c} + \sum_{k=1}^K \frac{1}{K |\omega_k^-|} \int_{\omega_k^-} \bs{\widehat{\ell}}_k(y) \mathrm{sdist}\left( \bs{c}, H_k(y) \right) \dx{y}.
\end{align}
Above, $|\omega_k^-|$ is the measure of $\omega_k^- \subset \Omega$. We again illustrate with an example that these quantities are computable.
\begin{example}
  Consider the positivity constraint setup of Example \ref{ex:positivity}. As we saw in Example \ref{eq:positivity-lambda}, the signed distance for our single constraint is given by \eqref{eq:sdist-1d}.
  Note that in this one-dimensional setup with finite-degree polynomials, the set $\omega_k^-$ is a finite union of subintervals of $[-1,1]$, and hence the measure $|\omega_k^-|$ is just the sum of the lengths of these subintervals. Then, the correction term on right-hand side of the update scheme \eqref{eq:c-update-averaged} is 
  \begin{align*}
    - \frac{1}{|\omega_k^-|} \int_{\omega_k^-} \bs{\widehat{\ell}}_{1}(y) \lambda(y) v(y) \dx{y} = - \frac{1}{|\omega_k^-|} \sum_{j=1}^N \bs{e}_j \int_{\omega_k^-} \lambda^2(y) v(y) v_j(y) \dx{y},
  \end{align*}
  where $\bs{e}_j$, $j \in [N]$ are the cardinal unit vectors in $\R^N$. Thus, the integrals that must be computed have smooth integrands and can be efficiently approximated by standard quadrature rules, assuming the endpoints of the subintervals defining $\omega_k^-$ can be identified.
\end{example}
A variation of Algorithm \ref{alg:greedy} that uses this averaging approach is nearly identical: the only change required is that the update of the coefficient vector $\bs{c}$ in line \ref{alg:greedy:update} should be replaced by the update in \eqref{eq:c-update-averaged}.

\Cref{fig:new4} visually depicts both the greedy and averaged projections idea where $V$ is a univariate space of polynomials and the constraint is positivity (i.e., Example \ref{ex:positivity}). In particular, the value $y^*$ that solves the greedy optimization problem \eqref{eq:global-minimization} is shown, along with the averaging set $\omega_1^-$ identified in \eqref{eq:averaging-set}. 

\begin{figure}[htbp]
  \centering
  \includegraphics[width=0.7\textwidth]{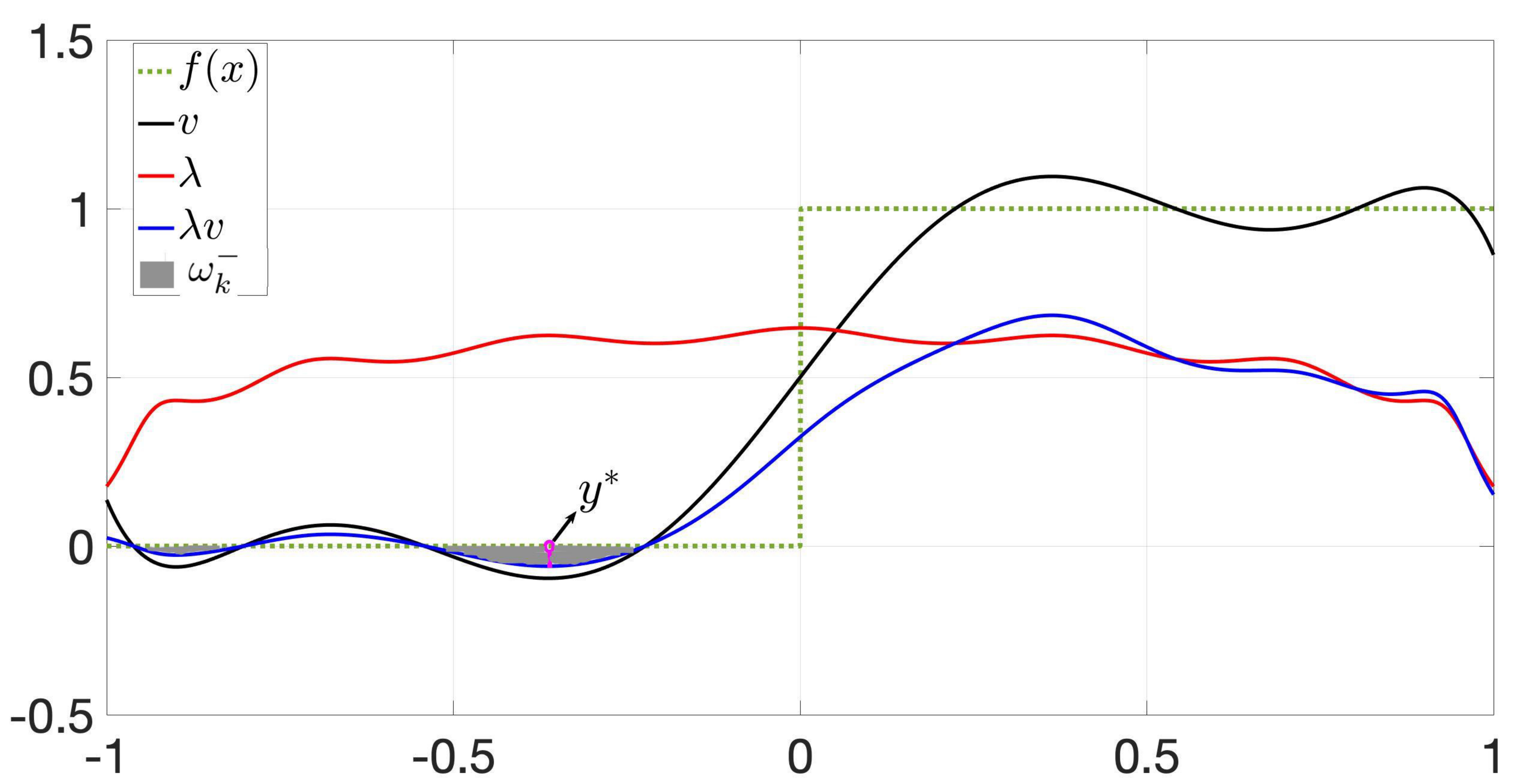}
  \caption{$v$ is the unconstrained $L^2([-1,1])$ projection of the step function $f(x)$ onto the space of degree-$7$ polynomials. For the positivity setup of Example \ref{ex:positivity}, the greedy point $y^\ast$ defined in \eqref{eq:global-minimization} is shown, and the averaging set $\omega_1^- \subset [-1,1]$ defined in \eqref{eq:averaging-set}. Also plotted is the signed distance $\lambda(y) v(y)$ of $v$ to $H_1(y)$.}\label{fig:new4}
\end{figure}

\subsection{Hybrid algorithms}
In experimentation, we have found that hybrid combinations of the greedy approach of Section \ref{ssec:alg-greedy} and the averaged approach of Section \ref{ssec:alg-averaged} work better than any algorithm alone. In particular, the greedy algorithm works well when $\bs{c}$ is ``close" to the solution, but the averaged algorithm works better for an iterate that is ``far" away. Thus, we utilize a standard switching procedure in optimization depending on the proximity to a basin of attraction. 

Through experimentation, we have found that the following switching mechanism works well:
We perform averaged projections until the norm of the correction \eqref{eq:c-update-averaged} reaches a certain tolerance. After a condition is met, we switch to greedy projections. The switching condition is the following: if $i$ is the iteration index, consider the ratio,
\begin{align*}
\alpha_i = \frac{\mathrm{sdist}(\bs{c}_i, H_{k^\ast_i}(y^\ast_i)}{\mathrm{sdist}(\bs{c}_{i-1}, H_{k^\ast_{i-1}}(y^\ast_{i-1})}.
\end{align*}
Our switching condition is triggered when $|\alpha_i - \alpha_{i-1}| \leq \epsilon$, for a user-specified $\epsilon$. At this point, we perform one more averaged update of the form \eqref{eq:c-update-averaged}, but multiply the right-hand side correction by $1/\alpha_i$. Subsequently, greedy projections as in \eqref{eq:greedy-update} are performed. While this procedure is quite \textit{ad hoc}, we have observed that it consistently performs better than other hybrid variants we have tried. 

\subsection{Algorithms for polynomial subspaces}

As described in previous sections, the main computational expense in our convex optimization algorithm is the minimization of the signed distance function in \eqref{eq:global-minimization} (for the greedy and hybrid algorithms) and identification and integration over the set $\omega_k^-$ in \eqref{eq:averaging-set} (for the averaged and hybrid algorithms). Such problems for \textit{general} function spaces are difficult to solve, and efficient algorithms will likely depend on what kinds of functions the subspace $V$ contains. 

When $V$ contains univariate polynomials, all the tasks in the algorithm can be reduced to the problem of computing roots of polynomials, and hence are feasible in principle. We accomplish this computationally by computing the spectrum of a confederate matrix, \an{although more sophisticated and practically effective methods are known}. We describe this formulation and details of the approach in Appendix \ref{app:poly-methods}.

\begin{figure}[htbp]
  \resizebox{\textwidth}{!}{
    \includegraphics[trim=11cm 1cm 7cm 1cm,clip=true,width=0.33\textwidth]{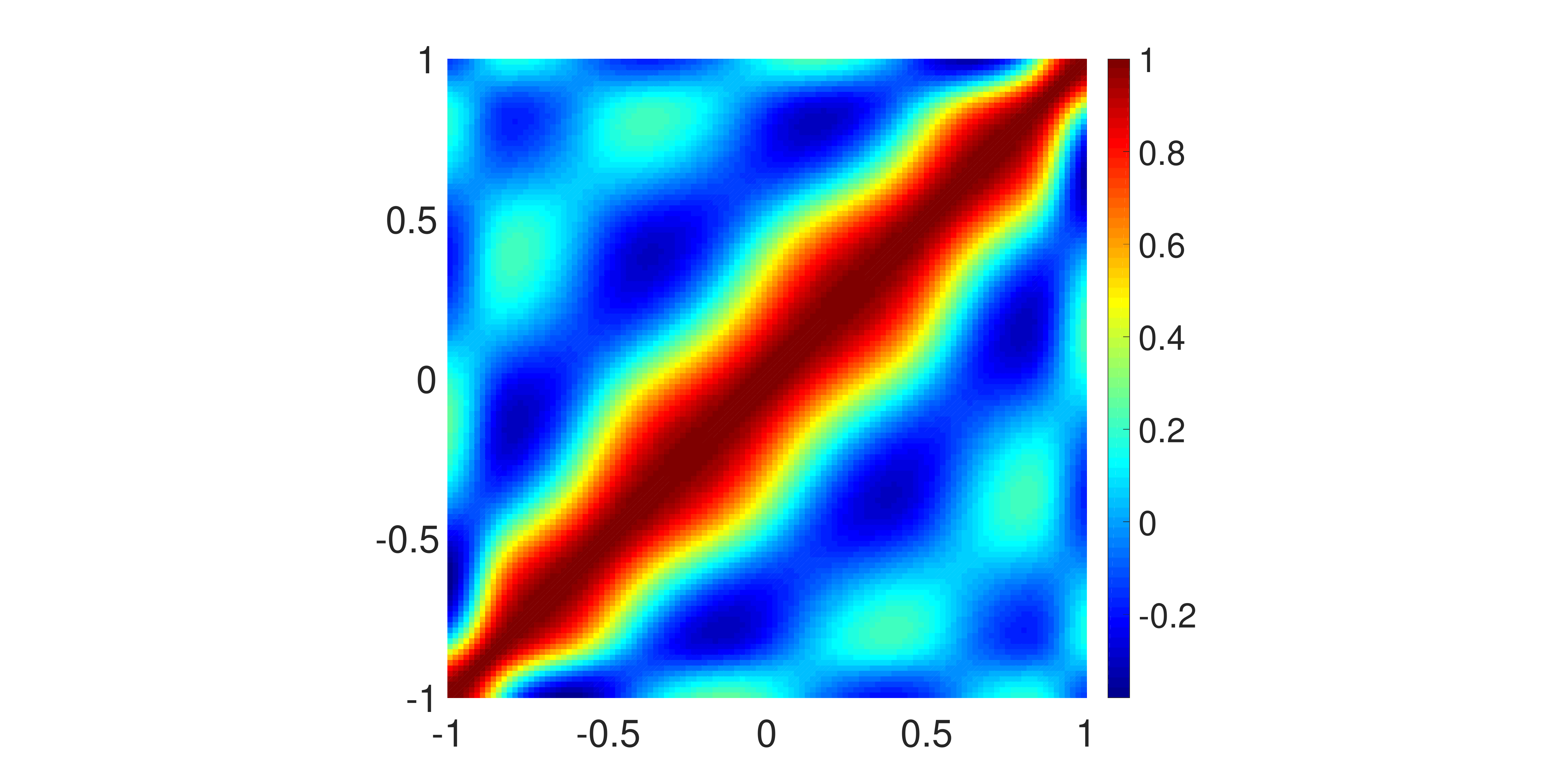}
    \includegraphics[trim=11cm 1cm 7cm 1cm,clip=true,width=0.33\textwidth]{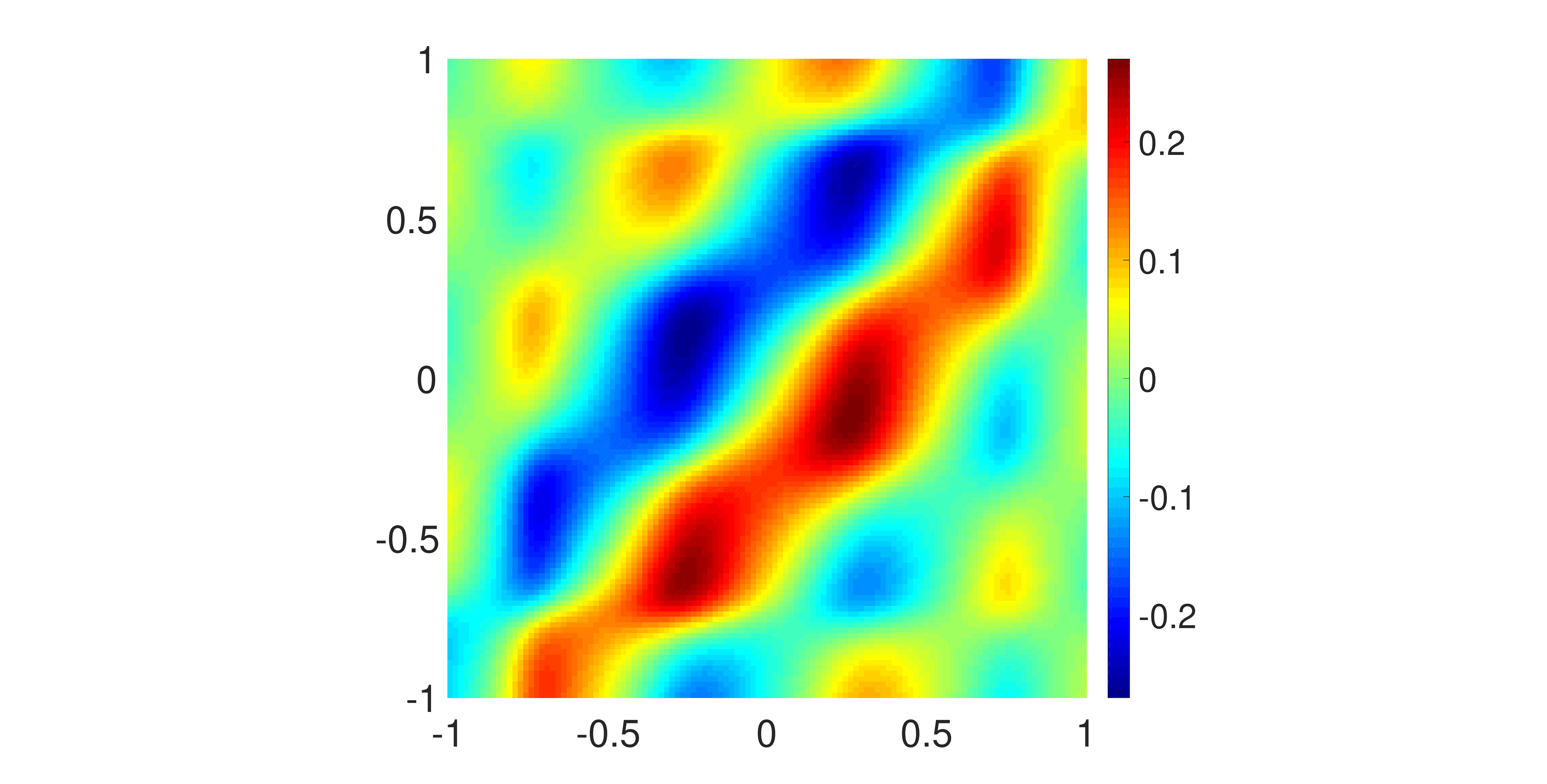}
    \includegraphics[trim=11cm 1cm 7cm 1cm,clip=true,width=0.33\textwidth]{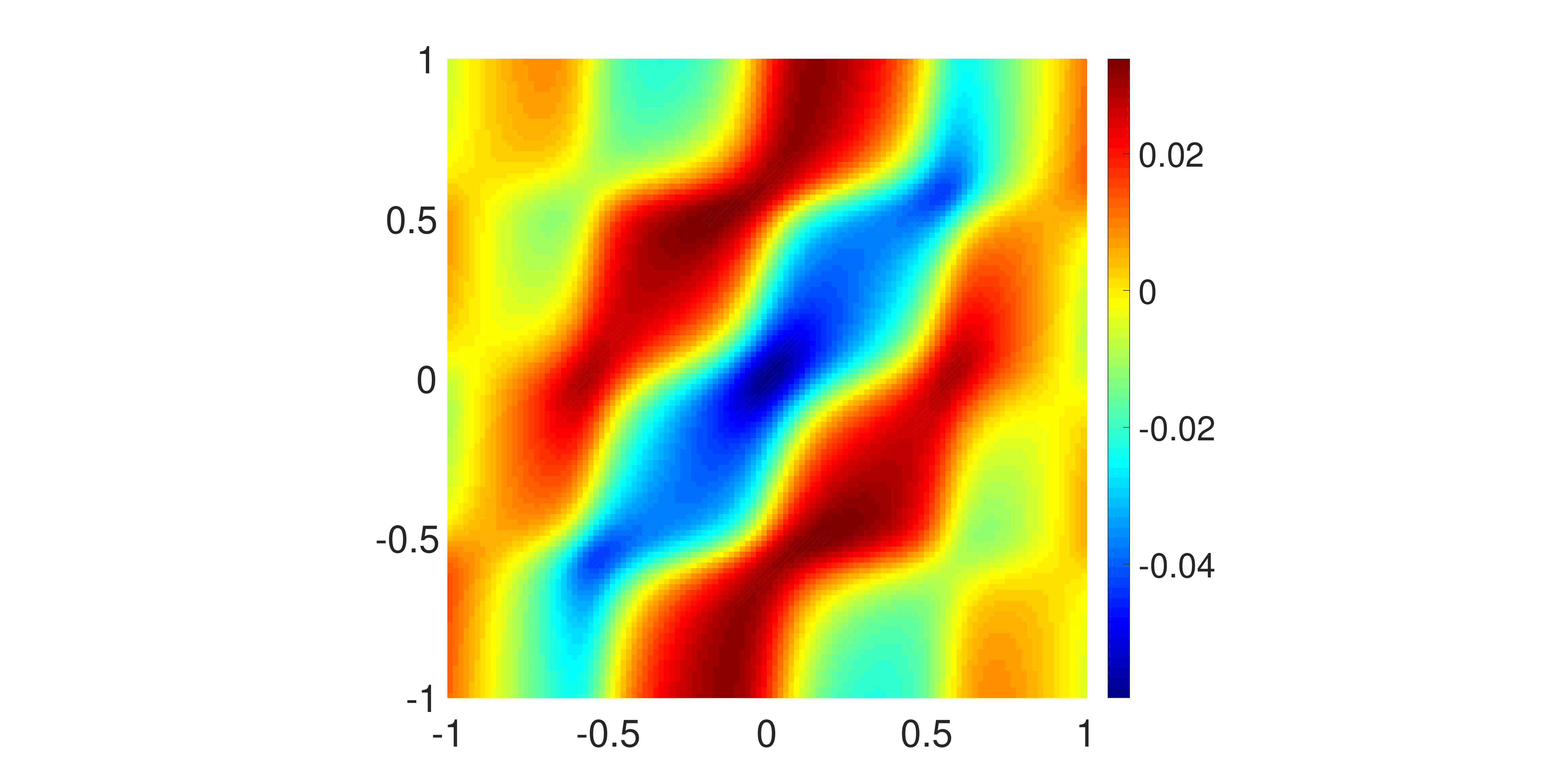}
  }\\
  \resizebox{\textwidth}{!}{
    \includegraphics[trim=11cm 1cm 7cm 1cm,clip=true,width=0.33\textwidth]{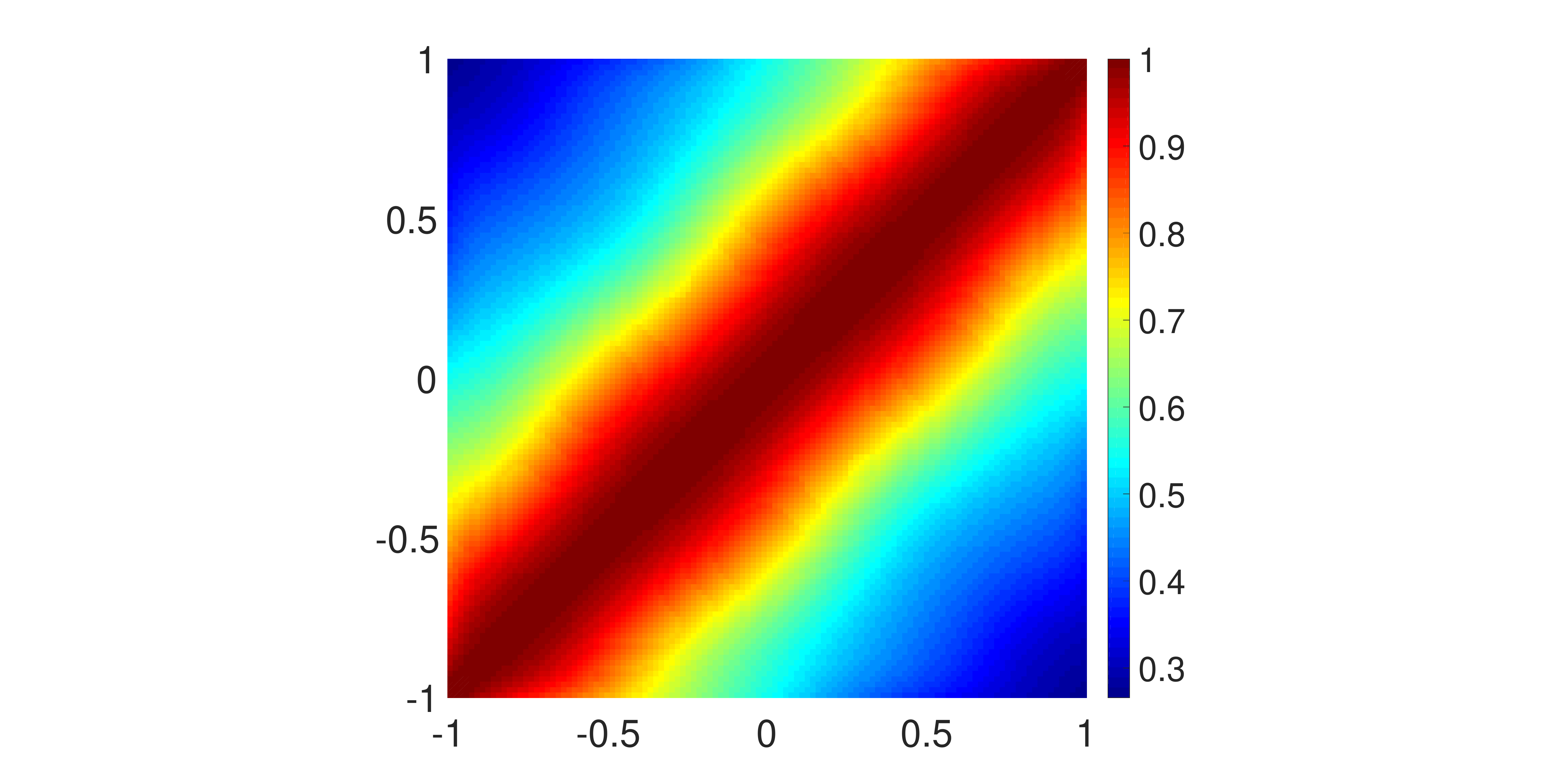}
    \includegraphics[trim=11cm 1cm 7cm 1cm,clip=true,width=0.33\textwidth]{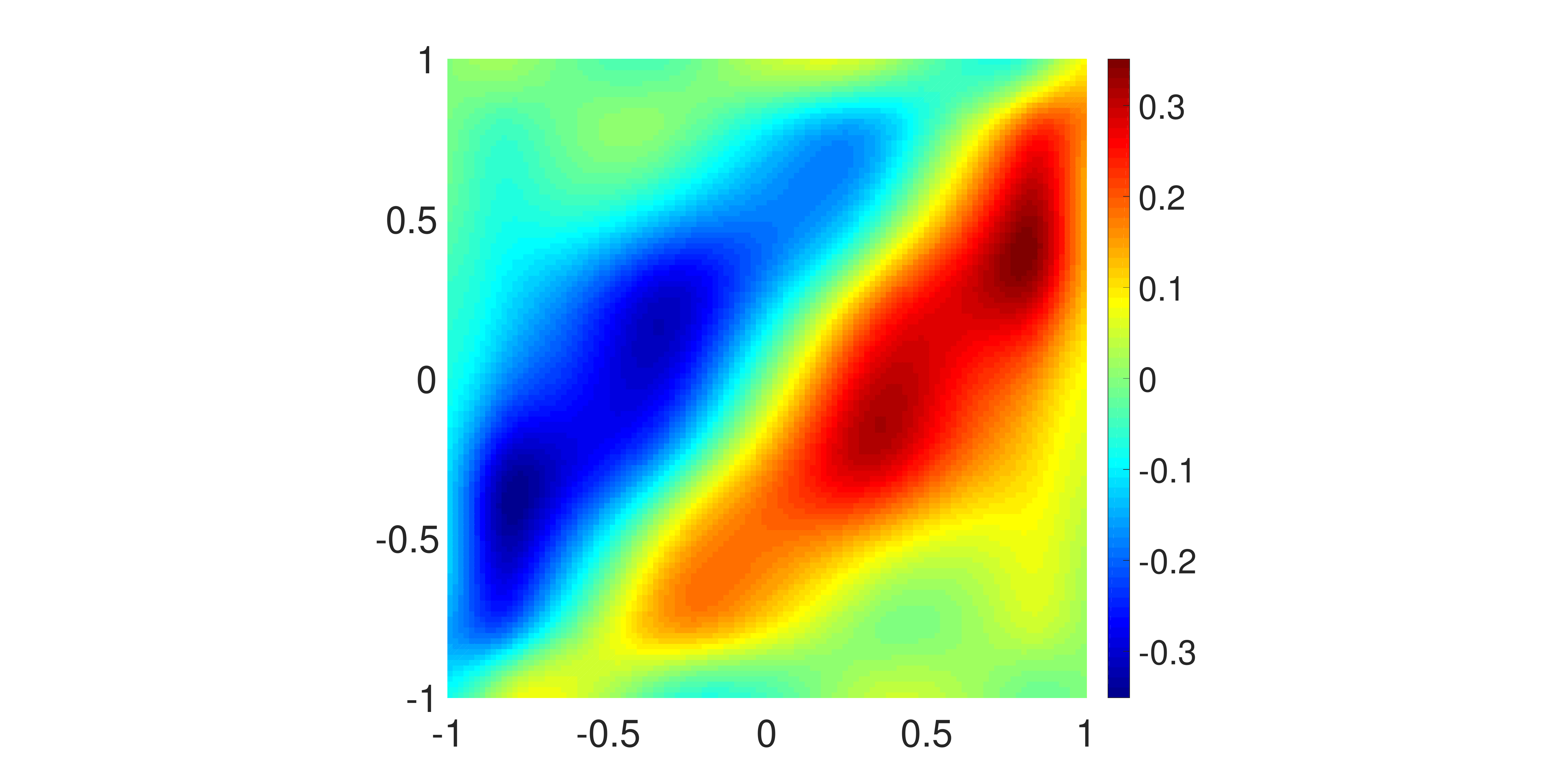}
    \includegraphics[trim=11cm 1cm 7cm 1cm,clip=true,width=0.33\textwidth]{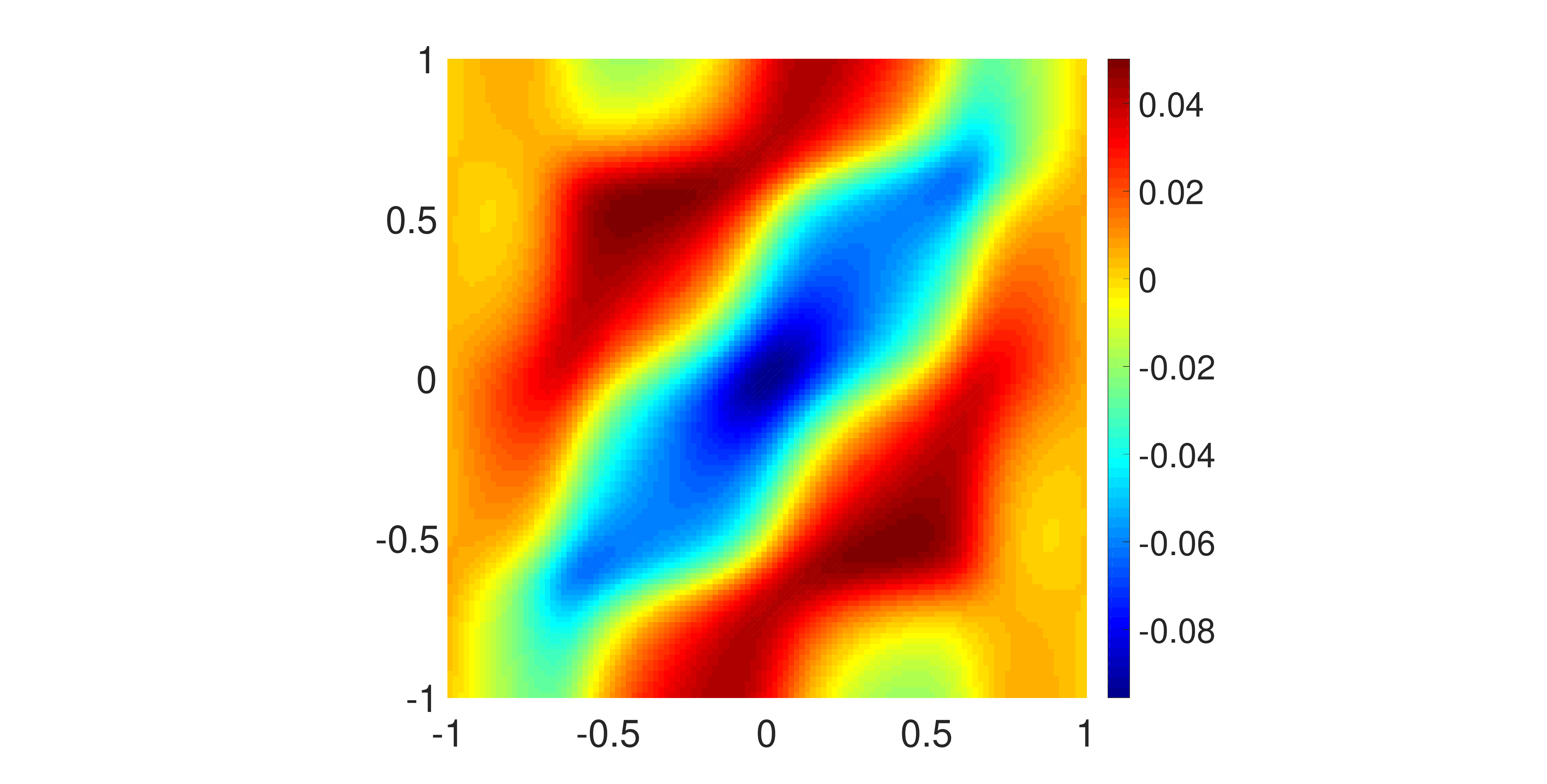}
  }\\
  \resizebox{\textwidth}{!}{
    \includegraphics[trim=11cm 1cm 7cm 1cm,clip=true,width=0.33\textwidth]{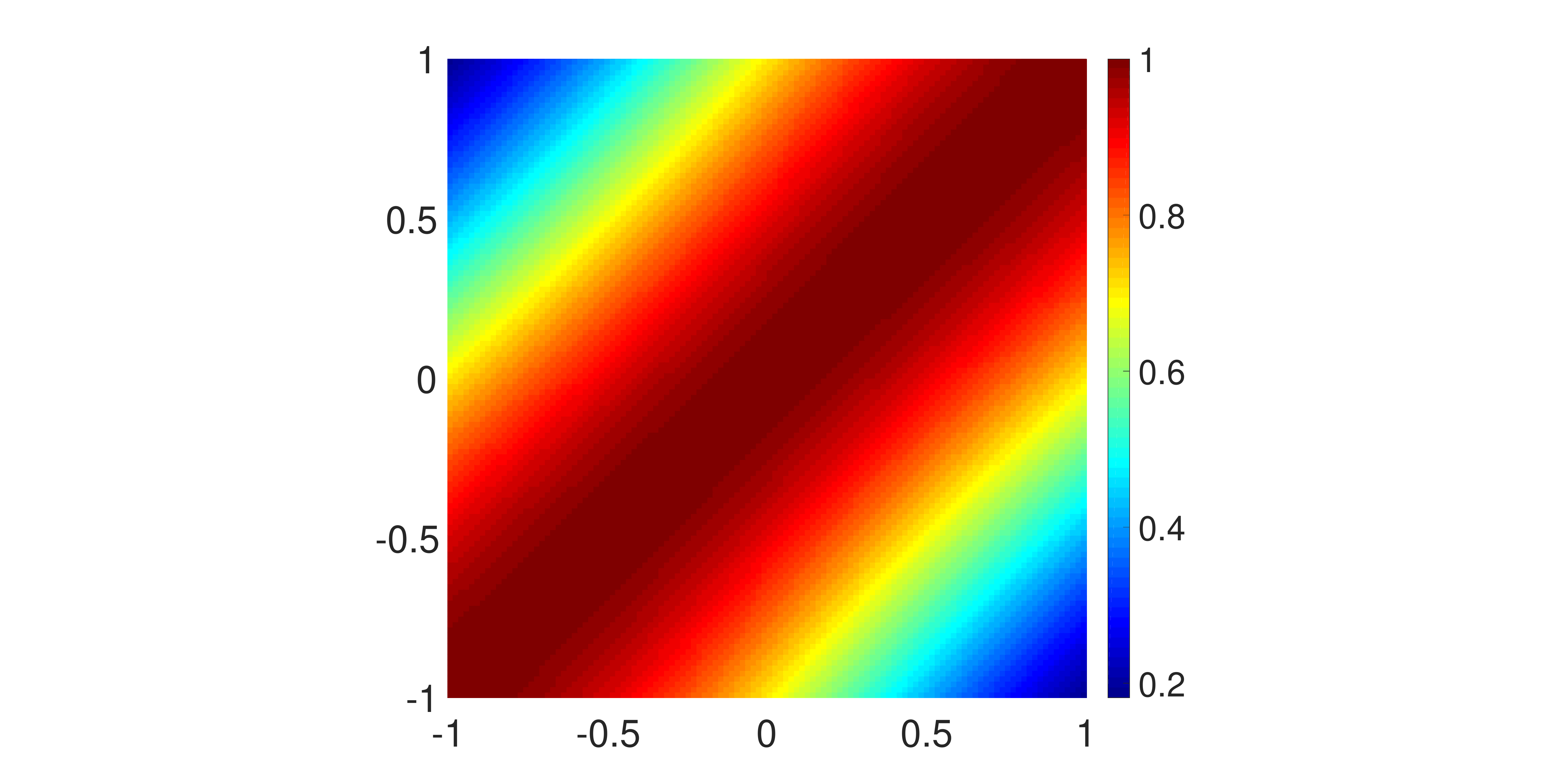}
    \includegraphics[trim=11cm 1cm 7cm 1cm,clip=true,width=0.33\textwidth]{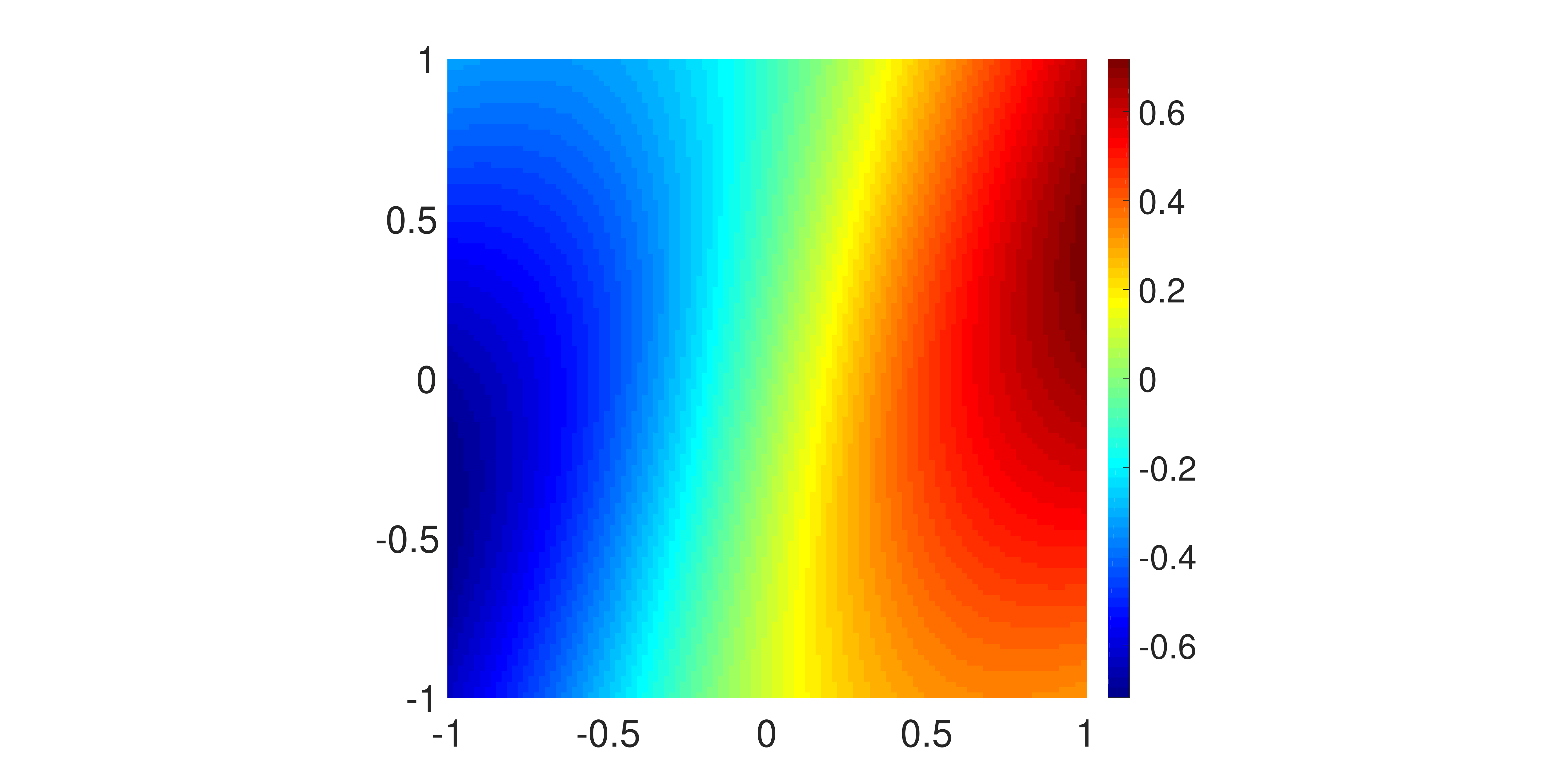}
    \includegraphics[trim=11cm 1cm 7cm 1cm,clip=true,width=0.33\textwidth]{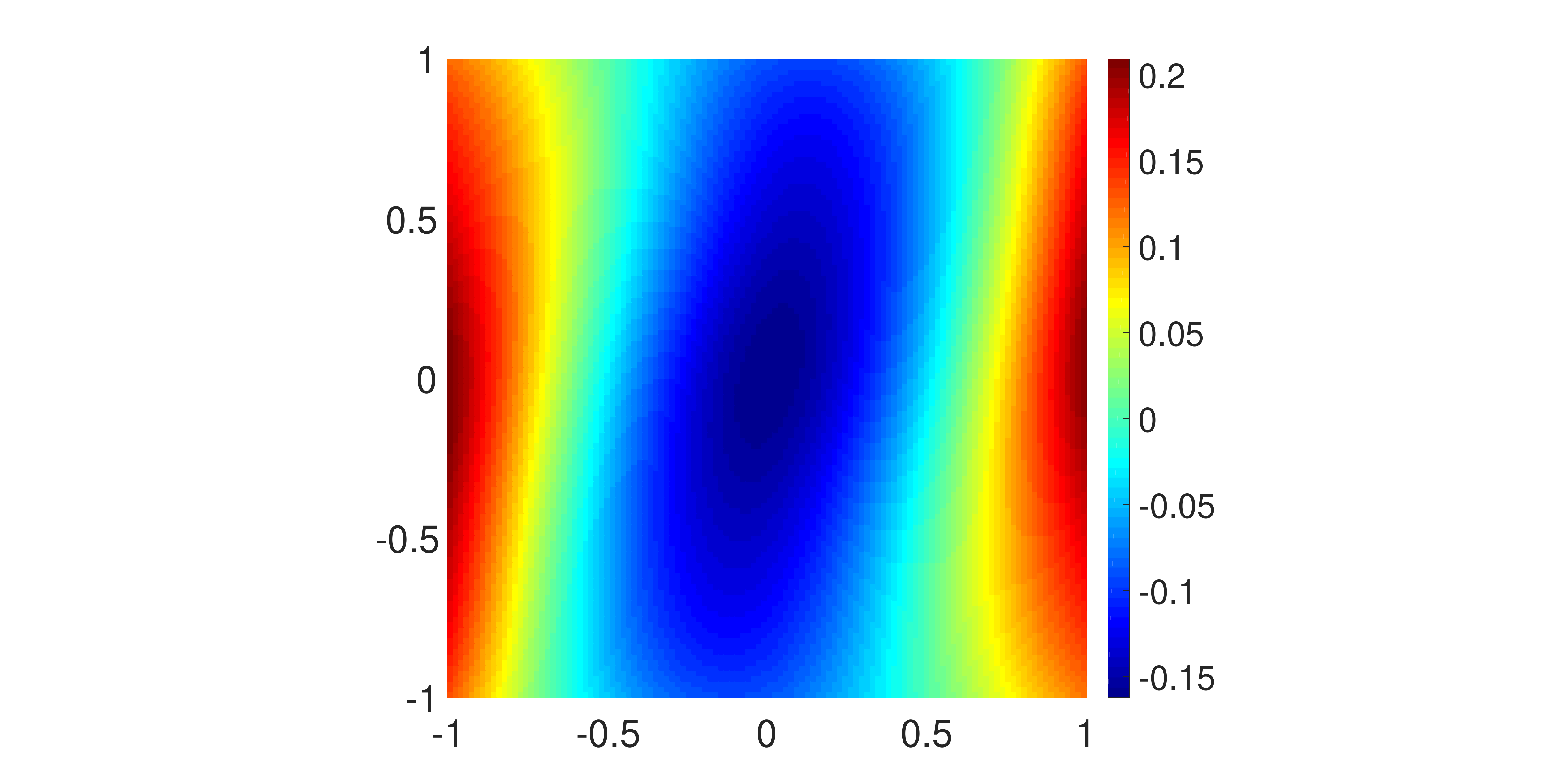}
  }
  \caption{Correction functions for degree-$5$ polynomial approximation. Plots of $\ell_k(y)(x)$ are shown as functions of $(x,y)$ for various constraints enforcing positivity of the $k$th derivative (rows) and ambient Hilbert spaces (columns). Top: $k=0$ positivity; middle: $k=1$ monotonicity; bottom: $k=2$ convexity. Left: $L^2([-1,1])$; middle; $H^1([-1,1])$; bottom: $H^2([-1,1])$.}\label{fig:surf5}
\end{figure}

\begin{figure}[htbp]
  \resizebox{\textwidth}{!}{
    \includegraphics[trim=11cm 1cm 7cm 1cm,clip=true,width=0.33\textwidth]{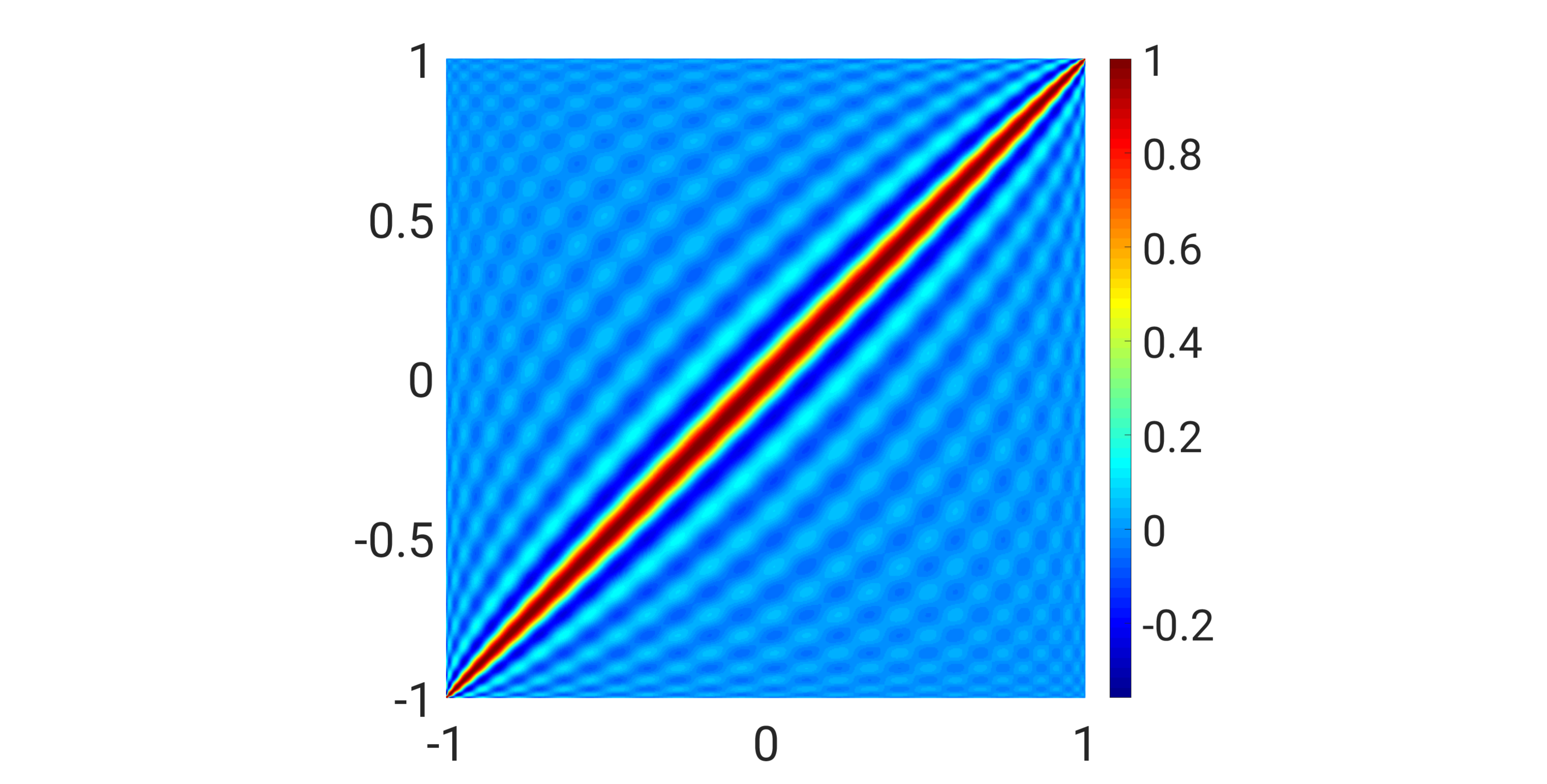}
    \includegraphics[trim=11cm 1cm 7cm 1cm,clip=true,width=0.33\textwidth]{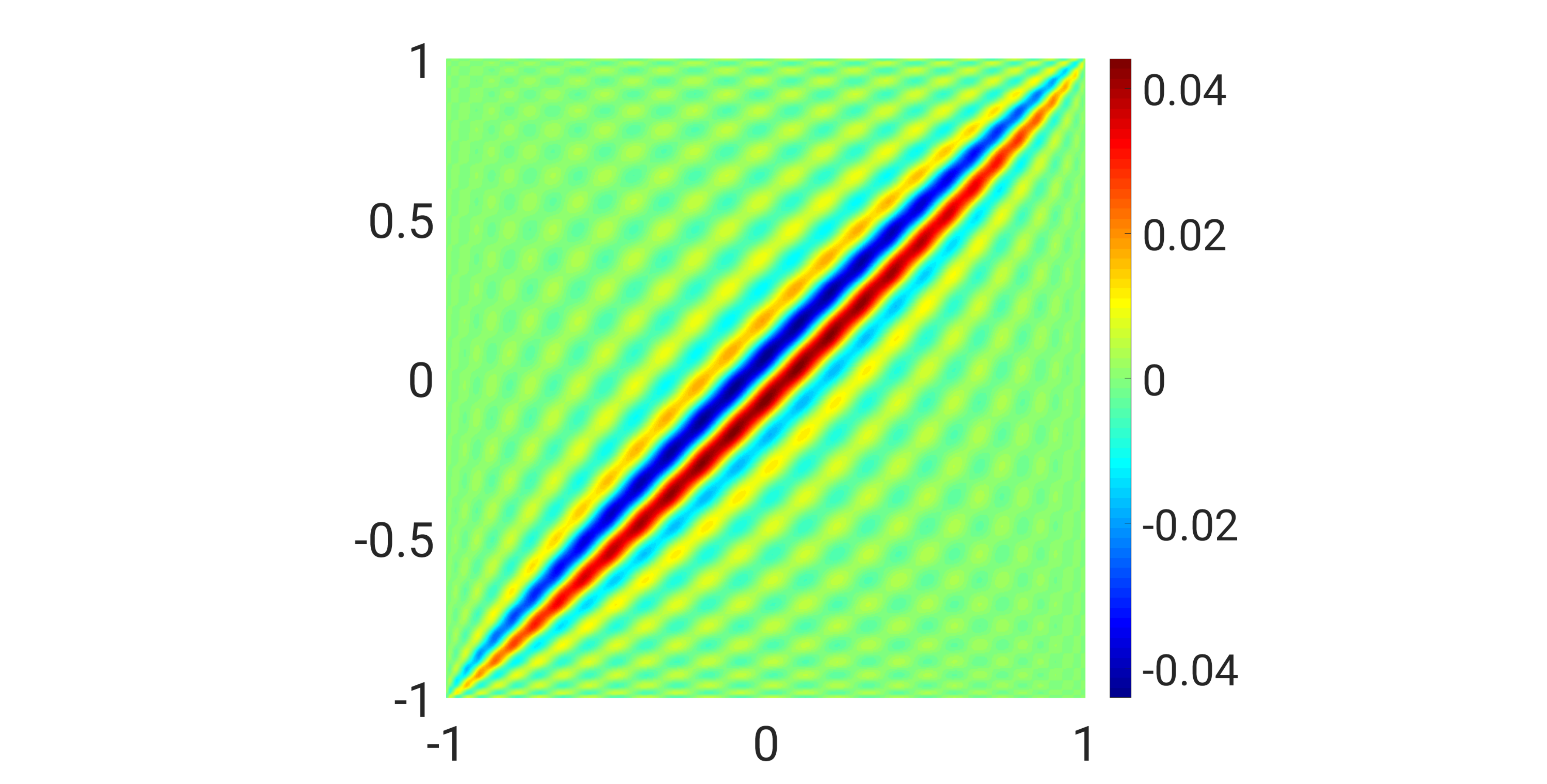}
    \includegraphics[trim=11cm 1cm 7cm 1cm,clip=true,width=0.33\textwidth]{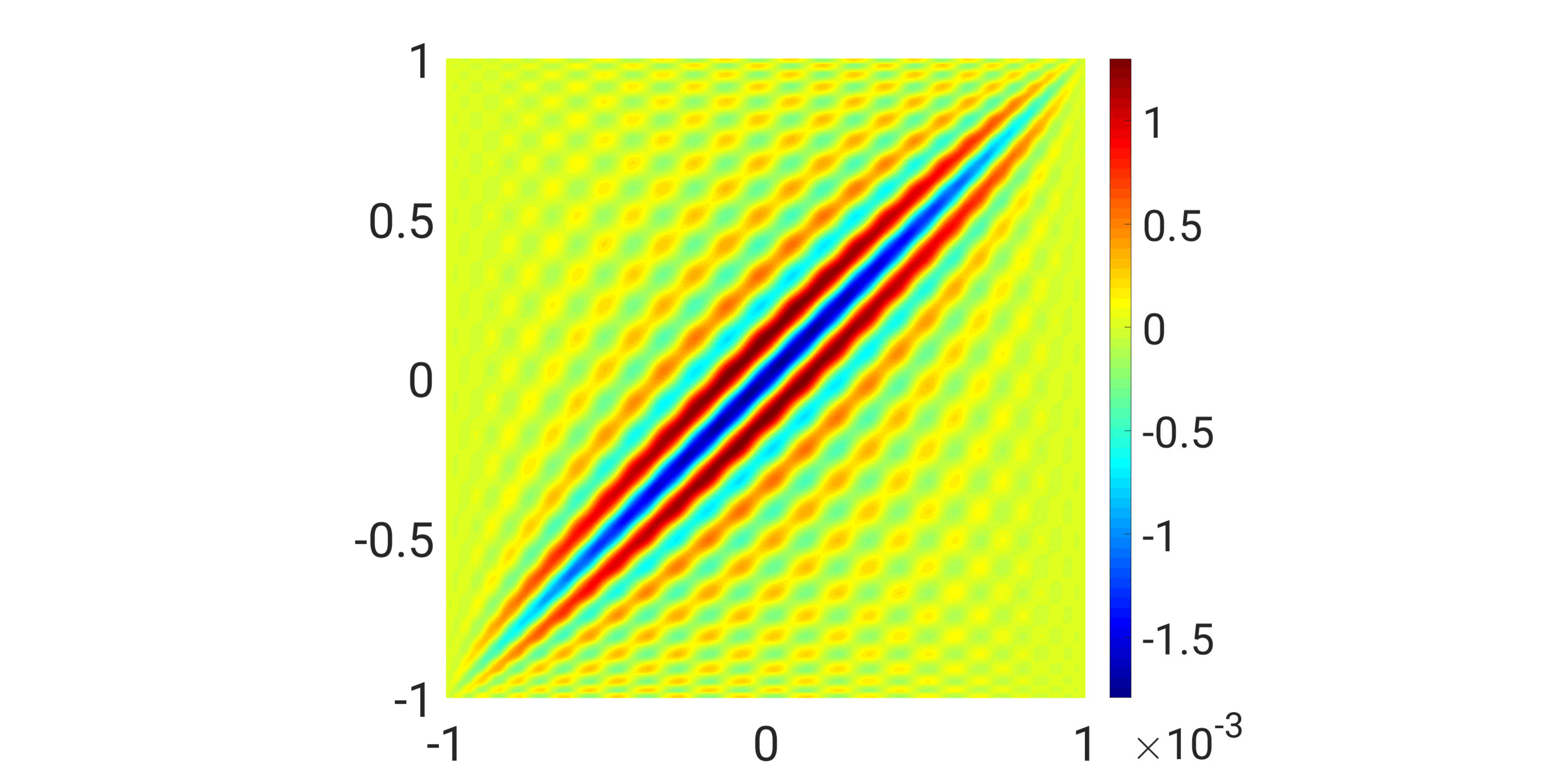}
  }\\
  \resizebox{\textwidth}{!}{
    \includegraphics[trim=11cm 1cm 7cm 1cm,clip=true,width=0.33\textwidth]{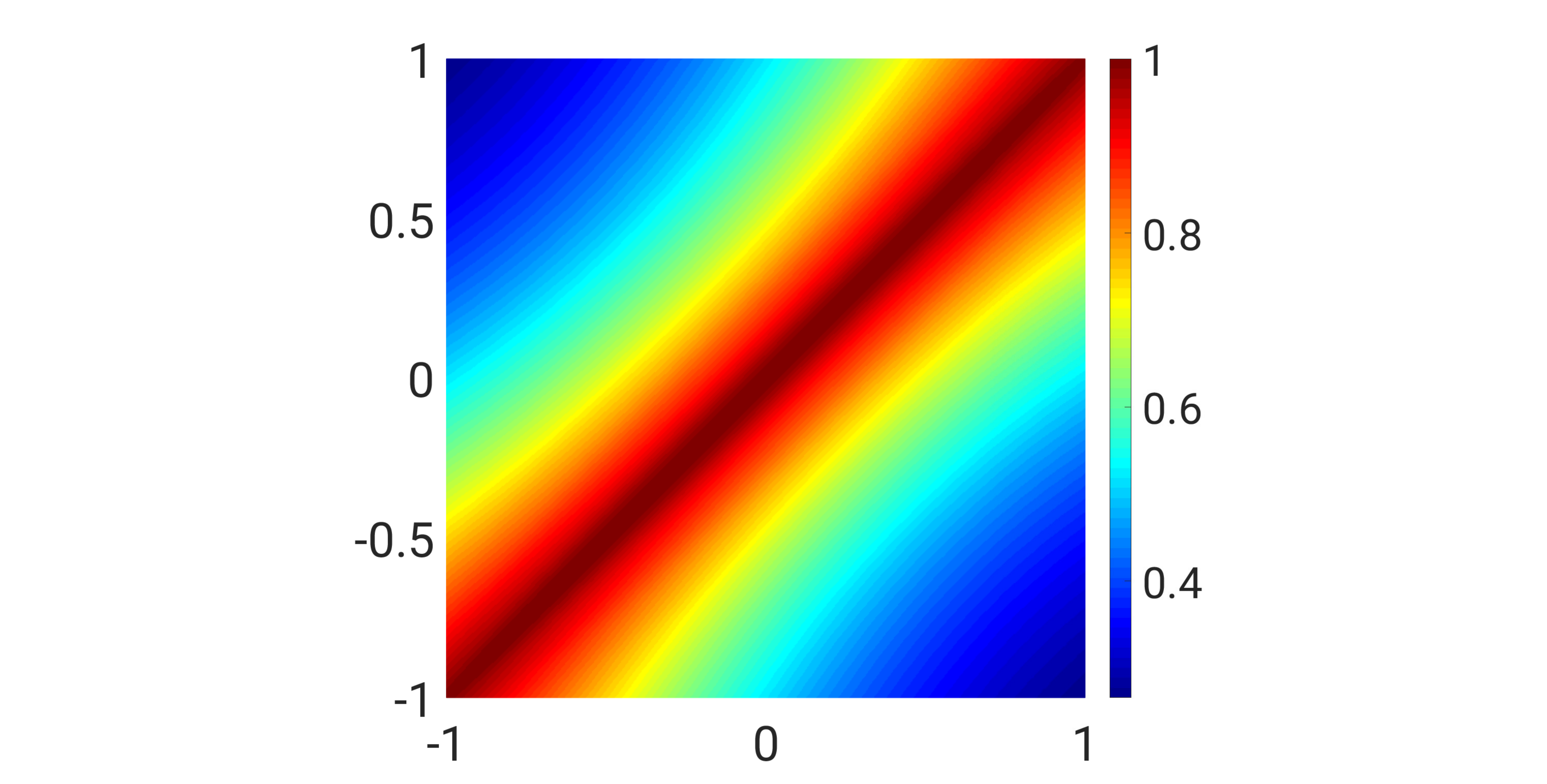}
    \includegraphics[trim=11cm 1cm 7cm 1cm,clip=true,width=0.33\textwidth]{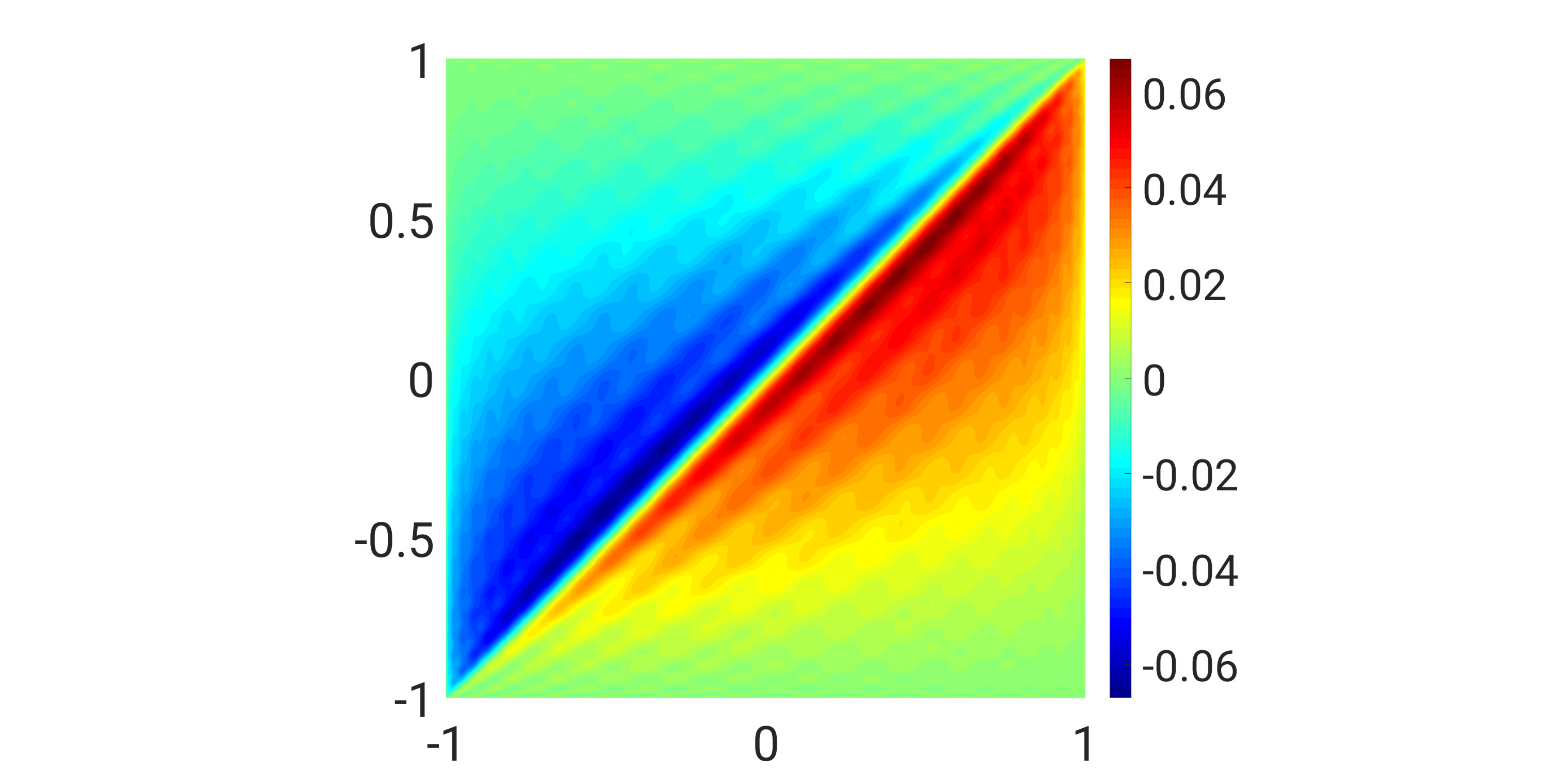}
    \includegraphics[trim=11cm 1cm 7cm 1cm,clip=true,width=0.33\textwidth]{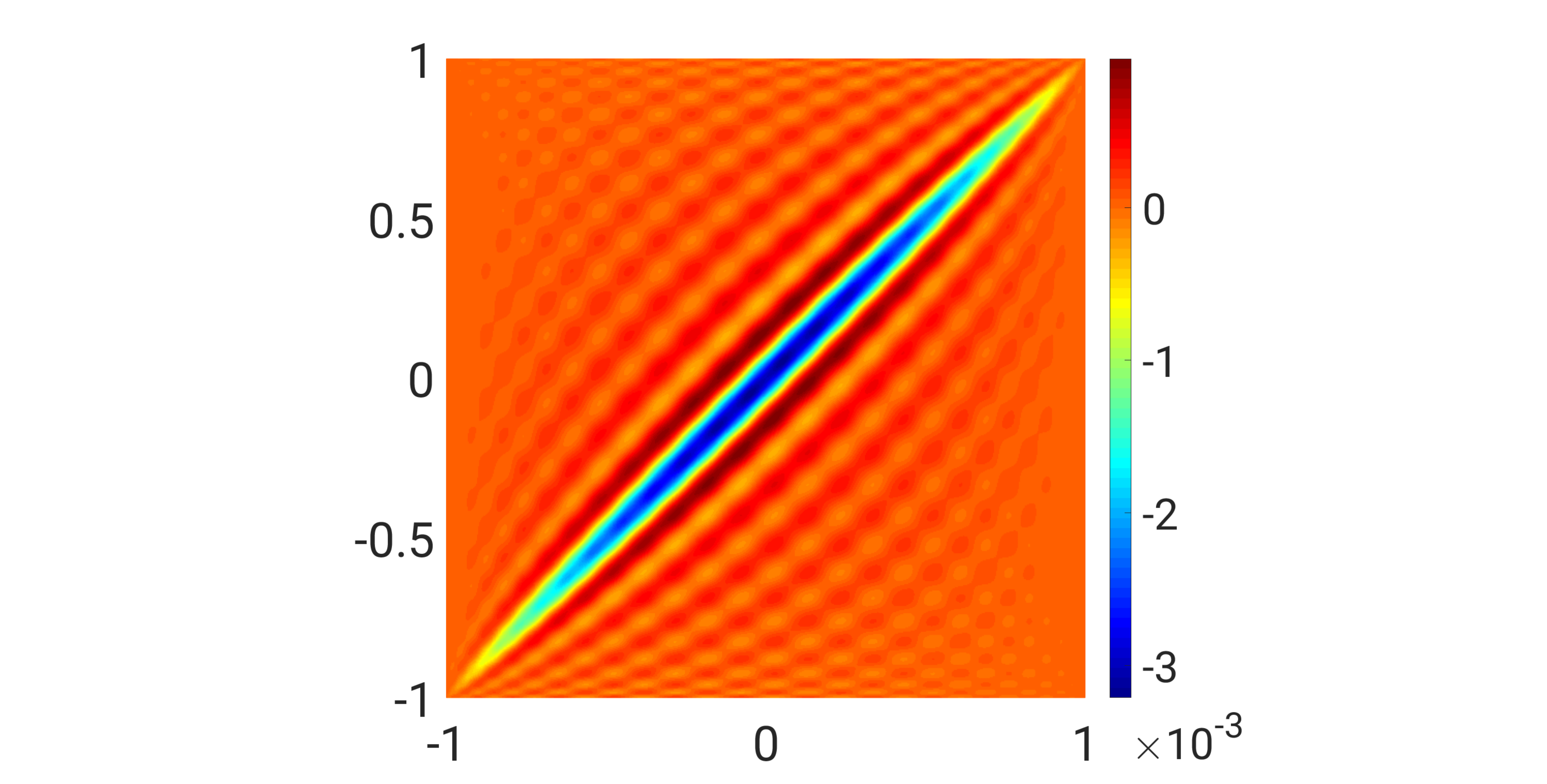}
  }\\
  \resizebox{\textwidth}{!}{
    \includegraphics[trim=11cm 1cm 7cm 1cm,clip=true,width=0.33\textwidth]{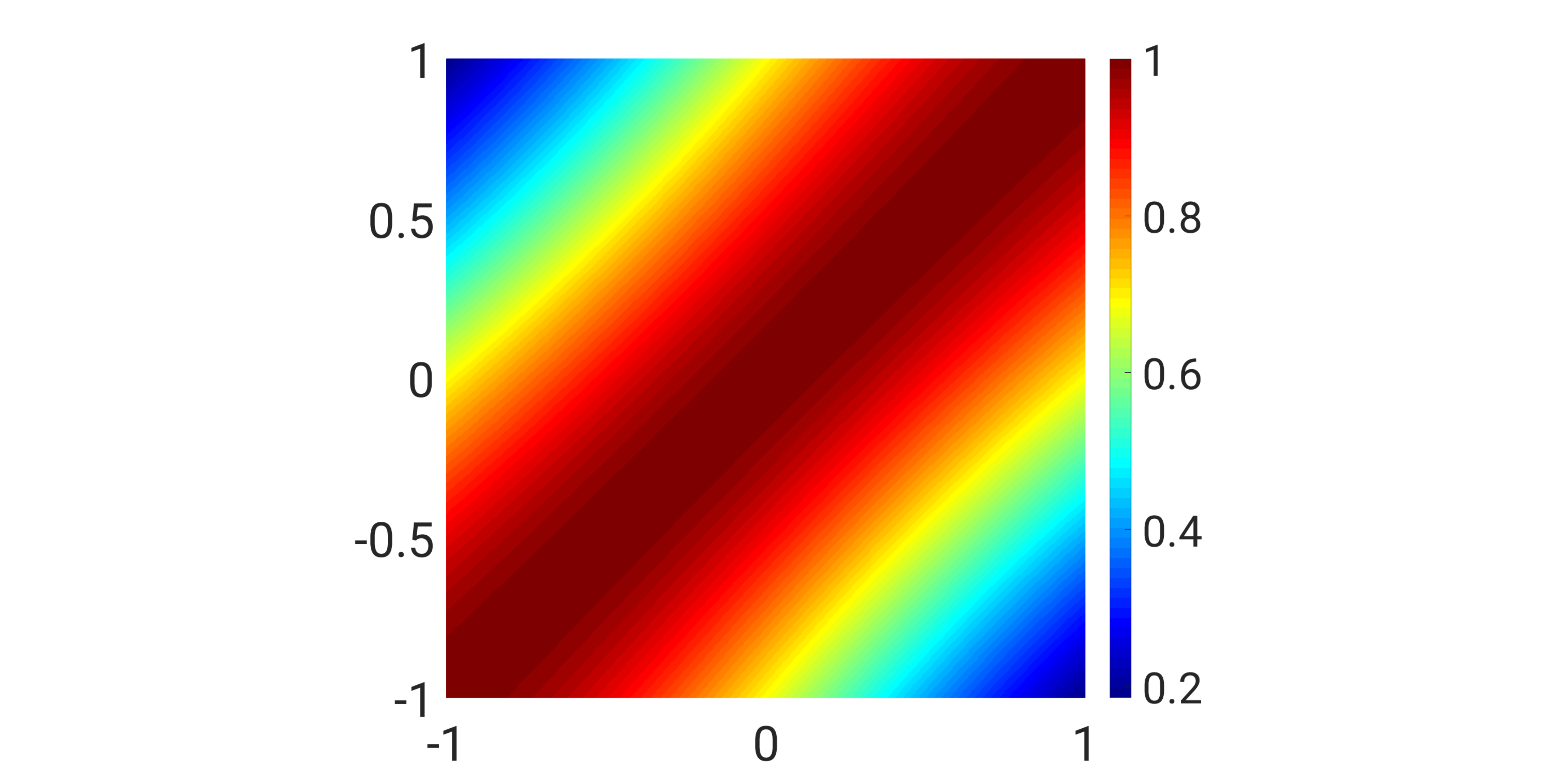}
    \includegraphics[trim=11cm 1cm 7cm 1cm,clip=true,width=0.33\textwidth]{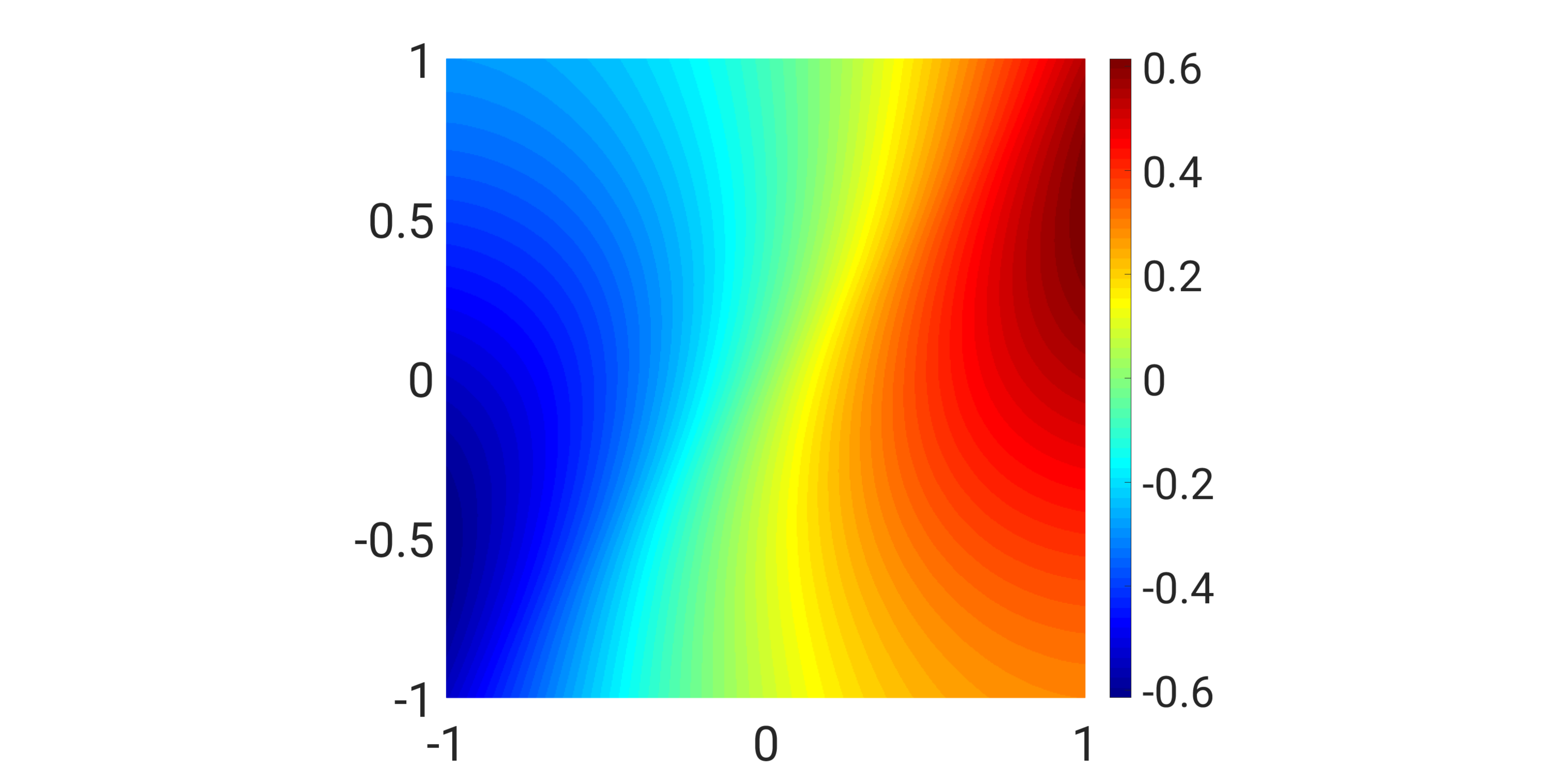}
    \includegraphics[trim=11cm 1cm 7cm 1cm,clip=true,width=0.33\textwidth]{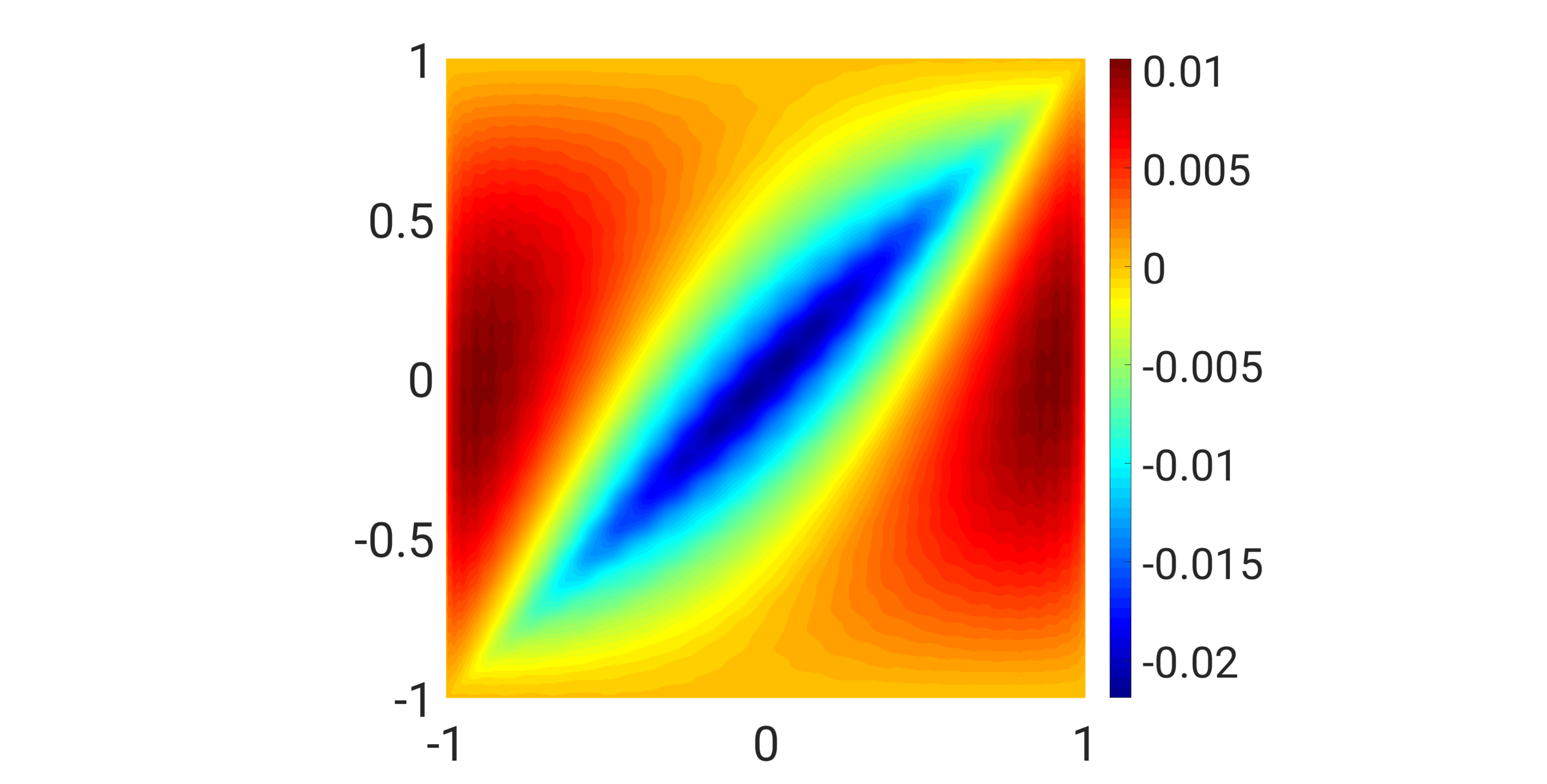}
  }
  \caption{Correction functions for degree-$30$ polynomial approximation. Plots of $\ell_k(y)(x)$ are shown as functions of $(x,y)$ for various constraints enforcing positivity of the $k$th derivative (rows) and ambient Hilbert spaces (columns). Top: $k=0$ positivity; middle: $k=1$ monotonicity; bottom: $k=2$ convexity. Left: $L^2([-1,1])$; middle; $H^1([-1,1])$; right: $H^2([-1,1])$.}\label{fig:surf30}
\end{figure}

\subsection{Nonidentity matrices $\bs{A}$}\label{ssec:AnotI}
The optimization problem we seek to solve is \eqref{eq:constopt-discrete}; the algorithms in this section have proceeded under the assumption that $\bs{A} = \bs{I}$. When this is not the case, we must first solve \eqref{eq:transformed-opt}, so that the full solution is \eqref{eq:transformed-opt-full}. Thus, we focus on the problem
\begin{align}\label{eq:mapped-optimization}
  \argmin_{\bs{z} \in \bs{\Sigma} \bs{V}^\ast C} \left\| \bs{z} - \bs{U}^\ast \bs{b} \right\|_2.
\end{align}
Note that the only difference between this optimization and the simplified version \eqref{eq:reduced-discrete} is that the feasible set is $\bs{\Sigma}\bs{V}^\ast C$ instead of $C$ so that we need only address the presence of the linear map $\bs{\Sigma} \bs{V}^\ast$. Since $C$ is closed and convex, then $\bs{\Sigma}\bs{V}^\ast C$ is also closed and convex, and in particular is defined as the intersection of closed, conic, convex sets $\widetilde{C}_k$:
\begin{align*}
  \bs{\Sigma}\bs{V}^\ast C \eqqcolon \widetilde{C} = \bigcap_{k=1}^K \widetilde{C}_k \coloneqq \bigcap_{k=1}^K \bs{\Sigma}\bs{V}^\ast \bs{C}_k.
\end{align*}
Thus, all our previous algorithms apply, except that we need to only transform $(L_k, r_k, \omega_k)$ for $C_k$ into the appropriate quantities for $\widetilde{C}_k$. These transformations are straightforward but technical, so we omit showing them explicitly.


\section{Numerical results}
\label{sec:results}

In all that follows, $f$ is a given function in a Hilbert space $H$. Given a finite-dimensional space $V \subset H$, the function $v$ is the $H$-best projection onto $V$, which does not in general satisfy any structural constraints. (Note from discussion in Section \ref{ssec:AnotI} that extensions to, e.g., collocation-based approximations, are straightforward.) The function $\tilde{v}$ is the output of the constrained optimization procedure. 

With the univariate Sobolev spaces,
\begin{align*} 
  H^q([-1,1]) &\coloneqq \left\{ f: [-1,1] \rightarrow \R \; \big|\; \|f\|_{H^2} < \infty  \right\}, & \|f\|_{H^q}^2 \coloneqq \sum_{j=0}^q \int_{-1}^1 \left[ f^{(j)}(x) \right]^2 \dx{x},
\end{align*}
our examples will consider the ambient Hilbert space $H$ as $H^0 (= L^2)$, $H^1$, or $H^2$. The subspace $V$ in all our experiments is the space of polynomials up to degree $N-1$:
\begin{align*}
  V = \left\{ p: [-1,1] \rightarrow \R \; \big|\; \deg p \leq N \right\}.
\end{align*}
Our test functions $f_j$ are defined iteratively for $j \geq 1$ as,
\begin{align*}
  f_{j+1}(x) &= c_{j+1} \int_{-1}^x f_j(t) \dx{t}, & f_0(x) = \left\{\begin{array}{rl} 0, & x \leq 0, \\ 1, & x > 0 \end{array}\right.,
\end{align*}
where $c_{j+1}$ are normalization constants chosen so that $f_{j+1}(1) = 1$. Thus, $f_j$ has $j$ weak $L^2$ derivatives. Finally, most of our results will consider intersections of the following four types of constraint sets in $V$:
\begin{itemize}
  \item (Positivity) $F_0 \coloneqq \{ f \in H \; \big|\; f(x) \geq 0 \;\forall\, x \in [-1,1] \}$
  \item (Boundedness) $G_0 \coloneqq \{ f \in H \; \big|\; f(x) \leq 1 \;\forall\, x \in [-1,1] \}$
  \item (Monotonicity) $F_1 \coloneqq \{ f \in H \; \big|\; f'(x) \geq 0 \;\forall\, x \in [-1,1] \}$
  \item (Convexity) $F_2 \coloneqq \{ f \in H \; \big|\; f''(x) \geq 0 \;\forall\, x \in [-1,1] \}$
\end{itemize}
Our final example considers a slightly more exotic set of constraints, which we discuss later.

In order to understand how much our algorithms ``change" the input $v$ when producing constrained approximation $\tilde{v}$, we measure the following quantity:
\begin{equation}\label{eq:err}
  \eta \coloneqq \frac{\| v - \tilde{v}\|_H}{\|f - v\|_H}.
\end{equation}
Since $f - v$ is $H$-orthogonal to $V$, then 
\begin{align*}
  \left\| f - \tilde{v} \right\|_H^2 = (1 + \eta^2) \| f - v\|^2_H.
\end{align*}
Thus, $\sqrt{1 + \eta^2}$ measures the error in the constrained approximation relative to the (best) unconstrained approximation. Values on the order of 1 imply that this optimization problem commits an additional error that is approximately the same as the error committed by the best (unconstrained) approximation.

Algorithm \ref{alg:greedy} is the greedy algorithm, but it is the template for the averaging and hybrid algorithms as well. For example, a hybrid algorithm needs to replace only line \ref{alg:greedy:update} in that algorithm by the update \eqref{eq:c-update-averaged}. However, we have left some details of the termination criterion in line \ref{alg:greedy:termination} unexplained. For example, we do not actually enforce $\mathrm{sdist}(\bs{c}, H_{k^\ast}(y^\ast)) \leq 0$ as stated due to finite precision. Instead, we enforce
\begin{align}\label{eq:tolerance}
  \mathrm{sdist}(\bs{c}, H_{k^\ast}(y^\ast)) &\leq \delta, & \delta &> 0,
\end{align}
where we set $\delta = 10^{-10}$ and have implemented the procedure in double precision. In addition, the number of iterations $I$ required before termination will also be reported.

\subsection{Algorithm comparison}
A short summary of all the experiments investigating the hybrid approaches and their comparison with the greedy and the averaging methods is given in the \cref{tab:sum}.

\begin{table}[thbp]
\vspace{-2mm}
\centering
 \resizebox{0.6\textwidth}{!}{
 \begin{tabular}{l r r l l r r l l} 
 & \multicolumn{4}{c}{$N = 6$} & \multicolumn{4}{c}{$N = 31$} \\ \cmidrule(l){2-5}\cmidrule(l){6-9}
 & \multicolumn{2}{c}{$I$}  & \multicolumn{2}{c}{$\eta$} & \multicolumn{2}{c}{$I$}  & \multicolumn{2}{c}{$\eta$}  \\ \cmidrule(l){2-3} \cmidrule(l){4-5} \cmidrule(l){6-7} \cmidrule(l){8-9}
   \multicolumn{1}{c}{$\epsilon$} & 
   \multicolumn{1}{c}{$10^{-3}$} & 
   \multicolumn{1}{c}{$10^{-5}$} & 
   \multicolumn{1}{c}{$10^{-3}$} & 
   \multicolumn{1}{c}{$10^{-5}$} & 
   \multicolumn{1}{c}{$10^{-3}$} & 
   \multicolumn{1}{c}{$10^{-5}$} & 
   \multicolumn{1}{c}{$10^{-3}$} & 
   \multicolumn{1}{c}{$10^{-5}$} \\
   \cmidrule(l){1-1} \cmidrule(l){2-5} \cmidrule(l){6-9}
 Greedy & 20 & 20&         1.147&         1.147& 23 & 23&         0.986&         0.986\\ 
 Averaging & 36 &36&       1.148&       1.148& 383&  383&    0.985&    0.985\\
 Hybrid & 4 & 16&1.1464& 1.148& 2 &3& 1.142&  1.054\\ [1ex] 
 \hline
 \end{tabular}
  }
  \caption{Performance summary of three proposed algorithms on the test function $f = f_2$ for different values of $\epsilon$, where $\epsilon$ is as described in Section 4.3. The constraint set is $E = F_0$.}\label{tab:sum}
\end{table}

\subsection{Function approximation examples}
We present two examples of function approximation to preserve structure in this section. The first example takes $H = H^0$ and the test function $f = f_0$, which is a step (discontinuous) function. We present results for different $N$ (the dimension of $V$) and different constraints. Figure \ref{fig:constrained-step} illustrates the results of the greedy algorithm. We compare medium-degree polynomial approximation $N = 6$ with high-degree polynomial approximation $N = 31$. The three kinds of constraints are (a) positivity, (b) positivity and boundedness, and (c) positivity, boundedness, and monotonicity. We observe that both the positivity and monotonicity constraints accomplish what is desired: the approximation $\tilde{v}$ satisfies the desired constraints, but still features Gibbs'-type oscillations. However, enforcing monotonicity as well results in a nonoscillatory approximation. All computed values of $\eta < 1$ show that the constrained approximation commits an error that is comparable to that of the $H$-best approximation.

\begin{figure}[H]
  \begin{center}
    \includegraphics[width=0.32\textwidth]{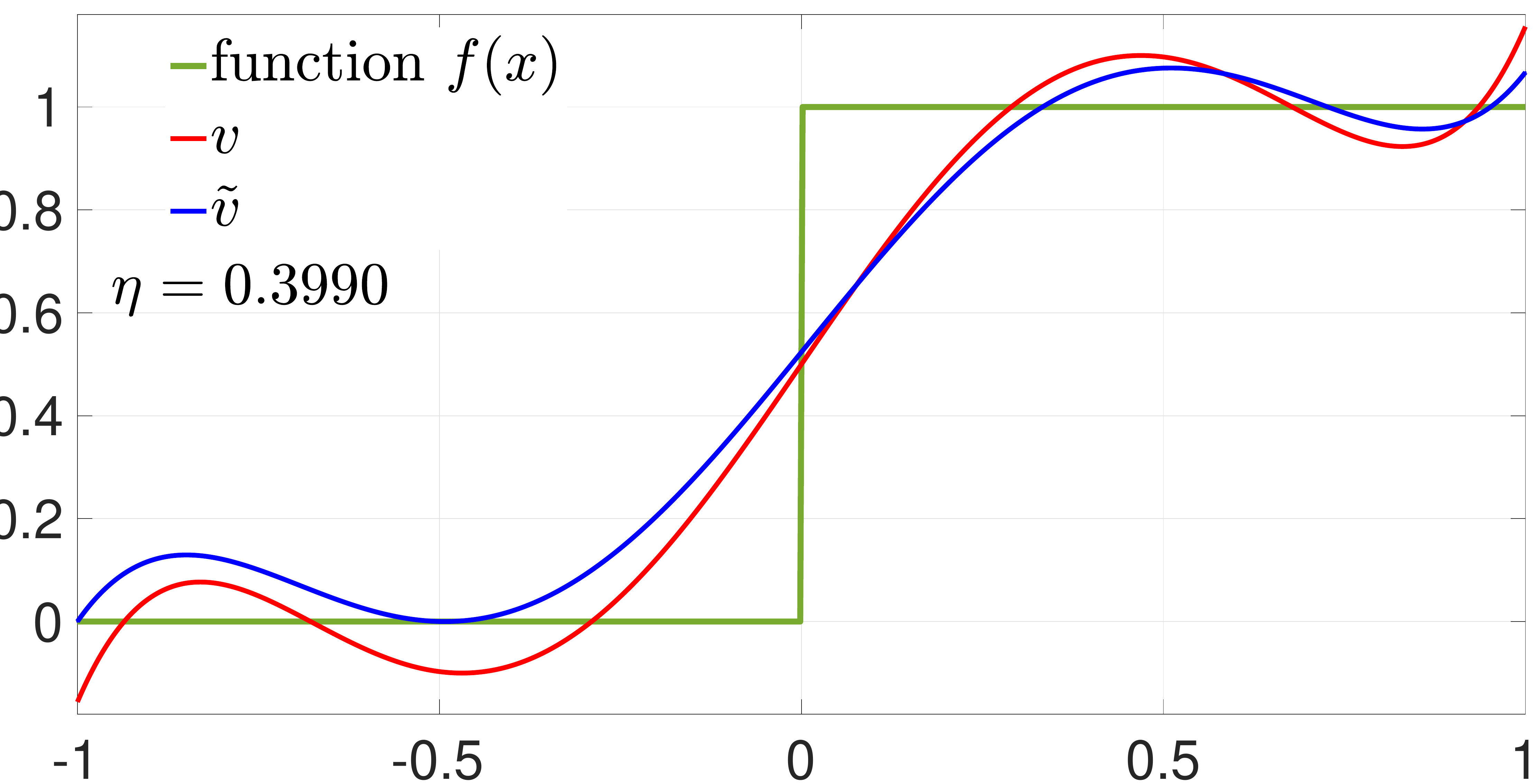}
    \includegraphics[width=0.32\textwidth]{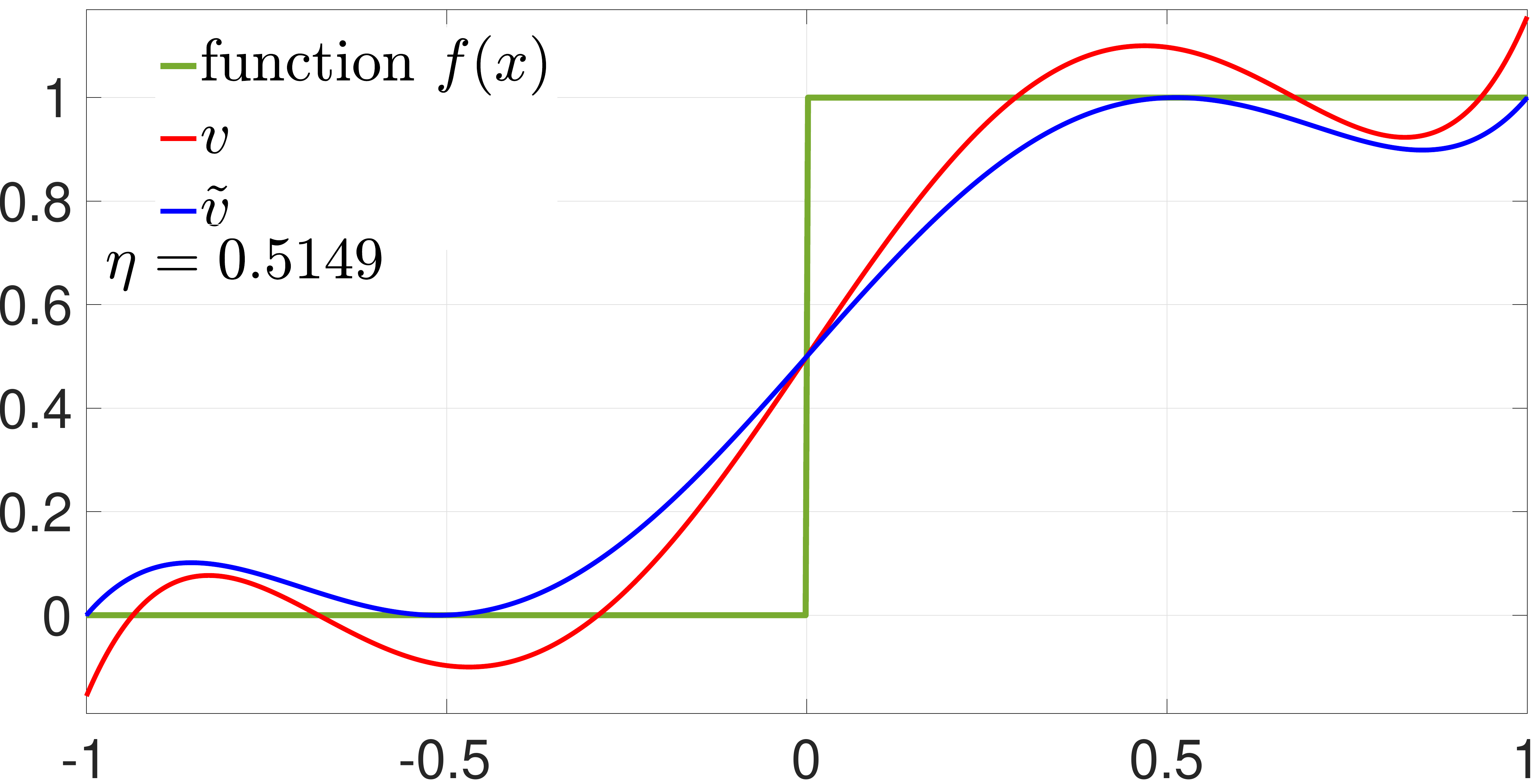}
    \includegraphics[width=0.32\textwidth]{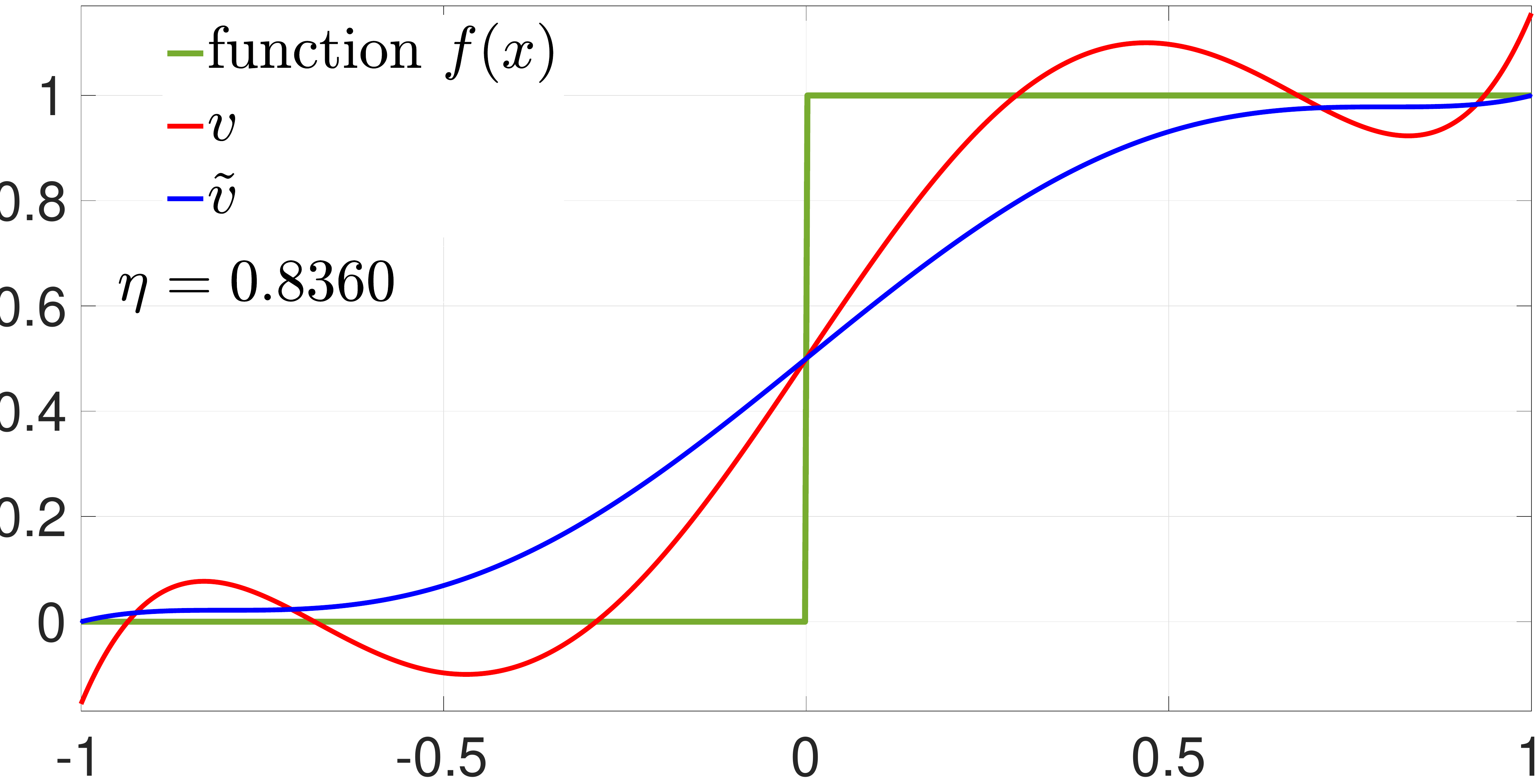} \\
    \includegraphics[width=0.32\textwidth]{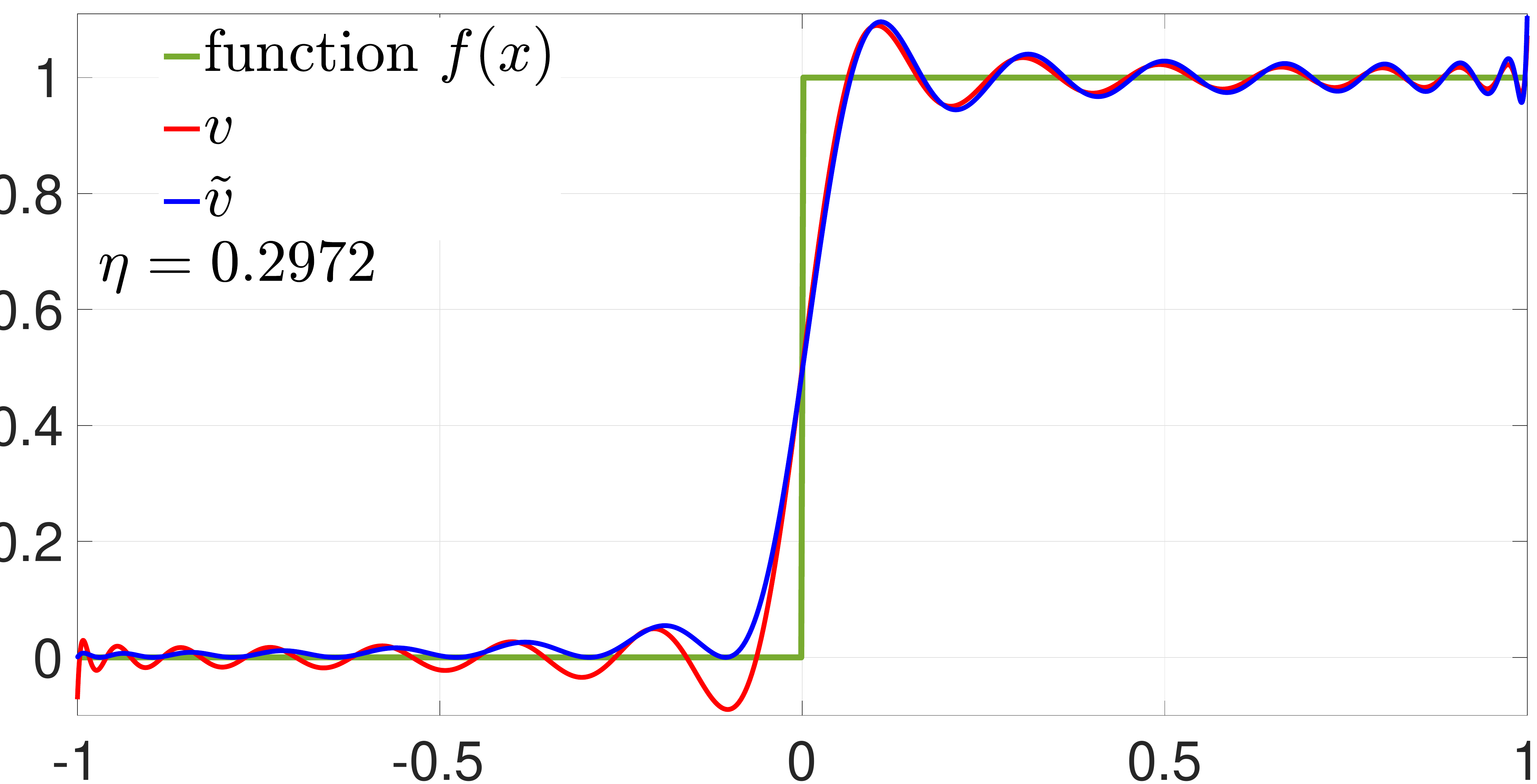}
    \includegraphics[width=0.32\textwidth]{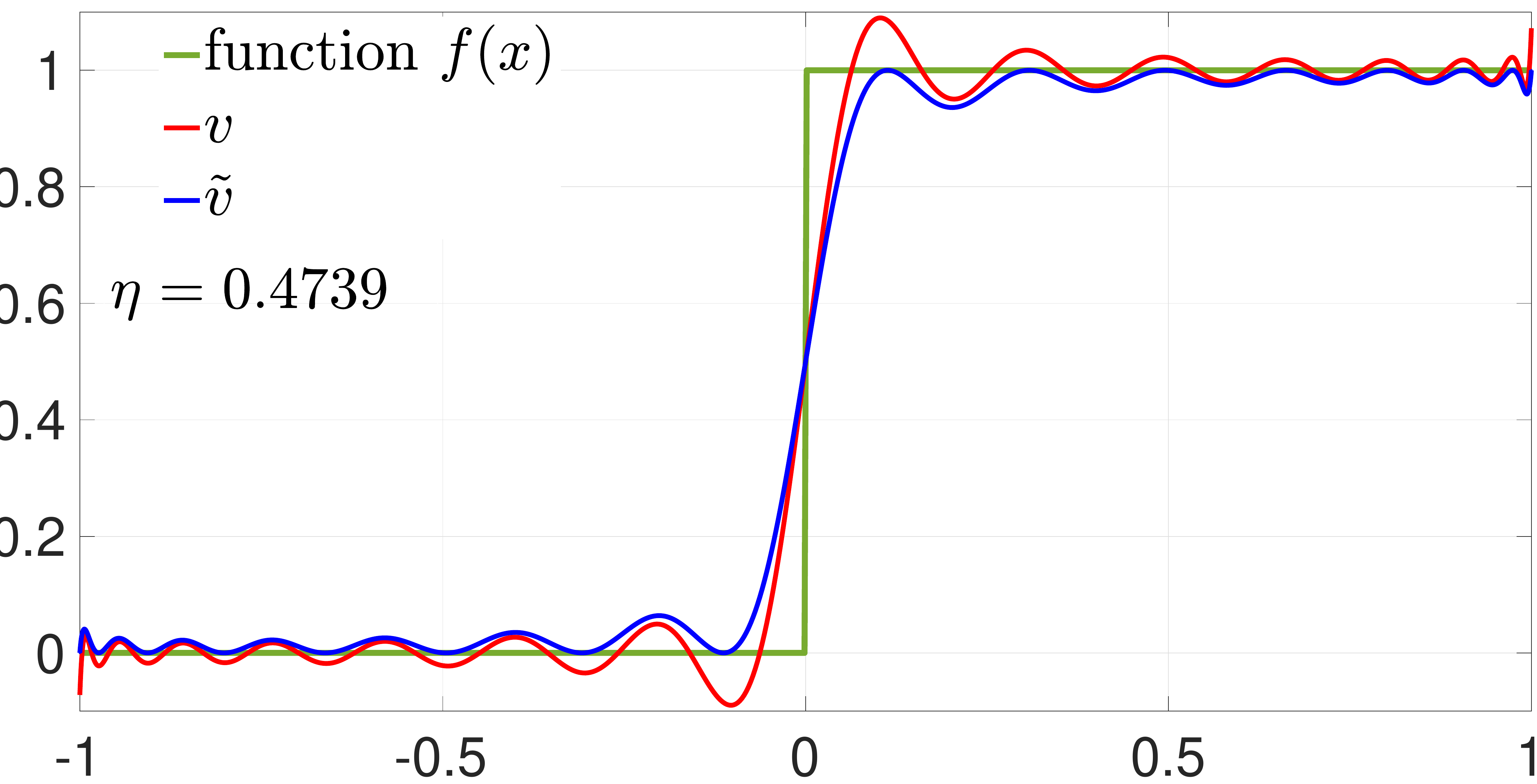}
    \includegraphics[width=0.32\textwidth]{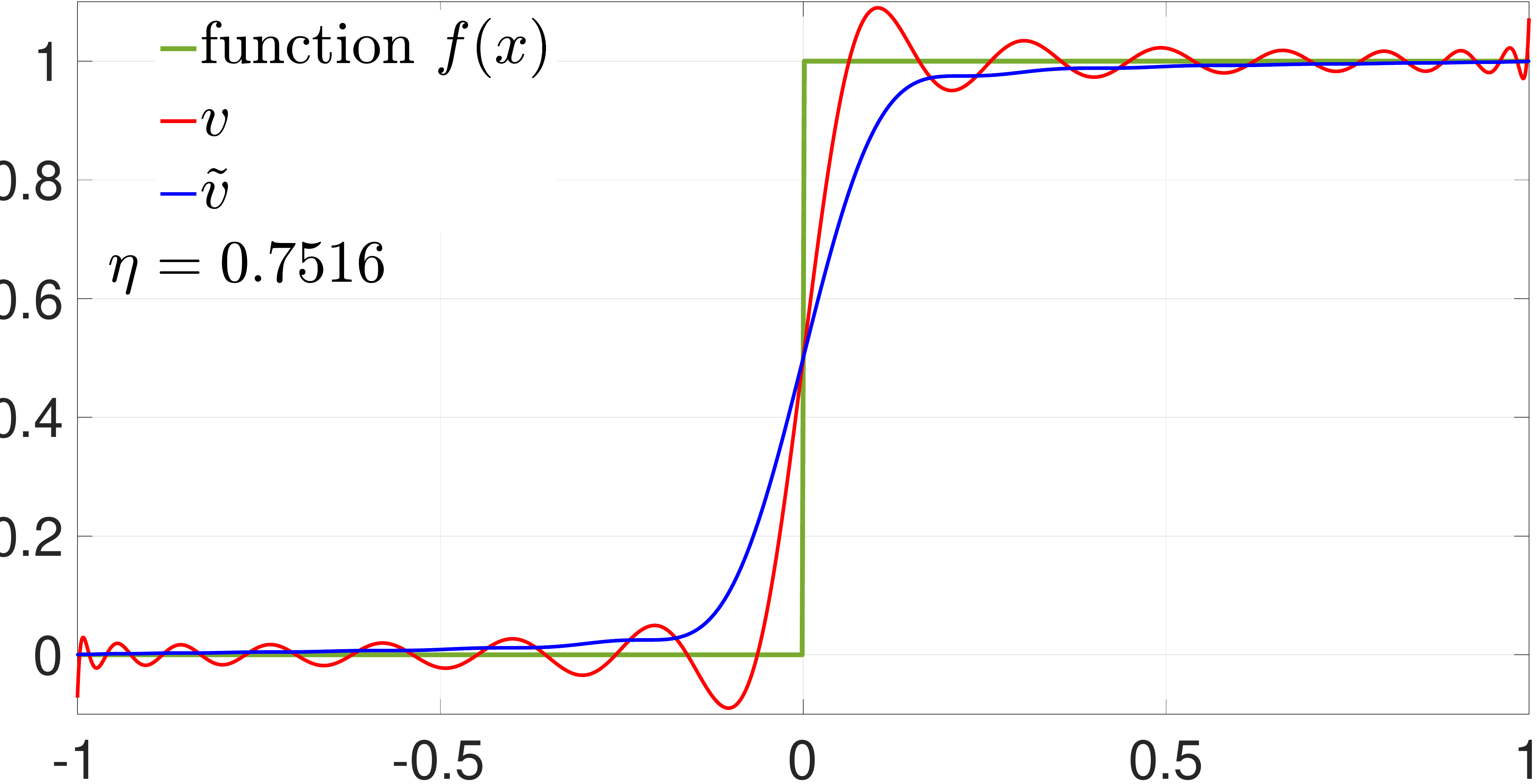} \\
  \end{center}
  \caption{Greedy algorithm results: Test function $f_0$ for different constraint sets $E$ and polynomial spaces $V$. Top: $N = \dim V = 6$, bottom: $N = \dim V = 31$. Left: Constraint $E = F_0$. Center: Constraint $E = F_0 \cap G_0$. Right: Constraint $E = F_0 \cap G_0 \cap F_1$. }\label{fig:constrained-step}
\end{figure}

Our second experiment uses the test function $f  = f_2$, which has a piecewise-constant second derivative. We use a fixed constraint: positivity, monotonicity, and convexity. Using again $N = 6$ and $N = 31$, we investigate the approximation for different ambient spaces $H = H^0$, $H^1$, and $H^2$. Results are displayed in Figure \ref{fig:constrained-f2}. We observe much larger values of $\eta$ in this experiment, but note that the values of $\eta$ decrease as the order of the Sobolev space increases. We also observe that the visual discrepancy between the constrained approximation and the underlying function is also considerably larger in this experiment. However, the approximation quality still appears good for the larger value of $N = 31$.
\begin{figure}[htbp]
\includegraphics[width=0.32\textwidth]{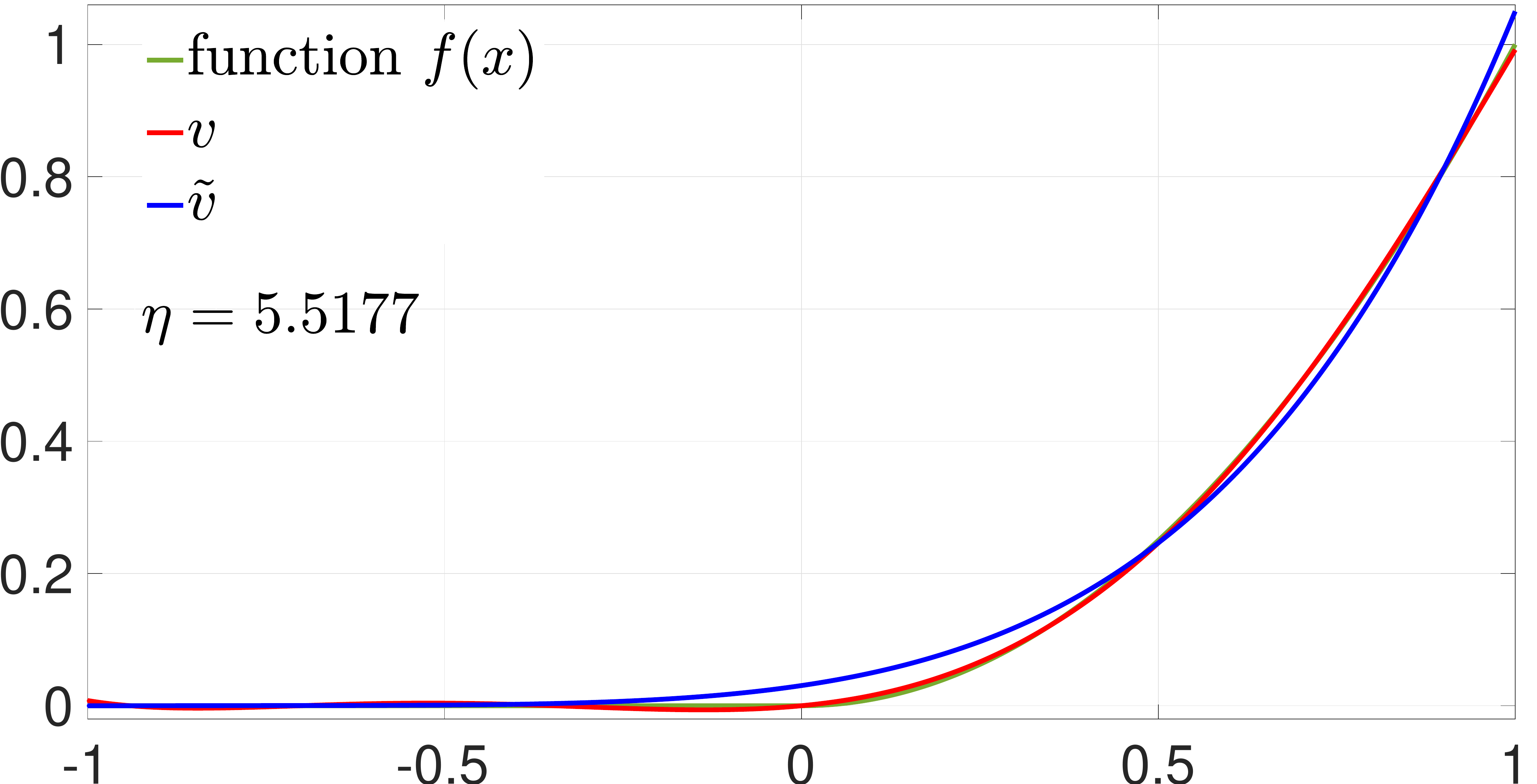}   
\includegraphics[width=0.32\textwidth]{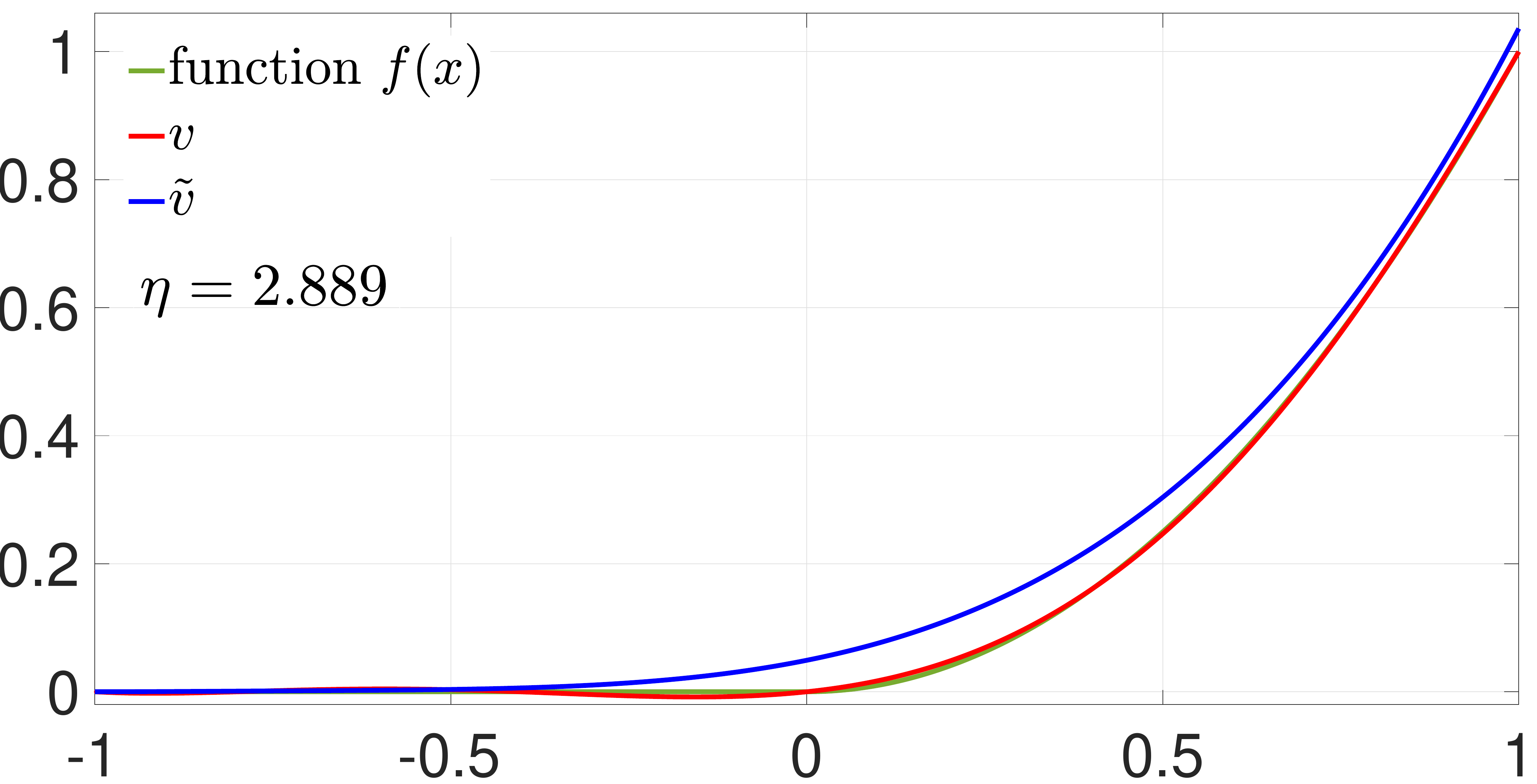}   
\includegraphics[width=0.32\textwidth]{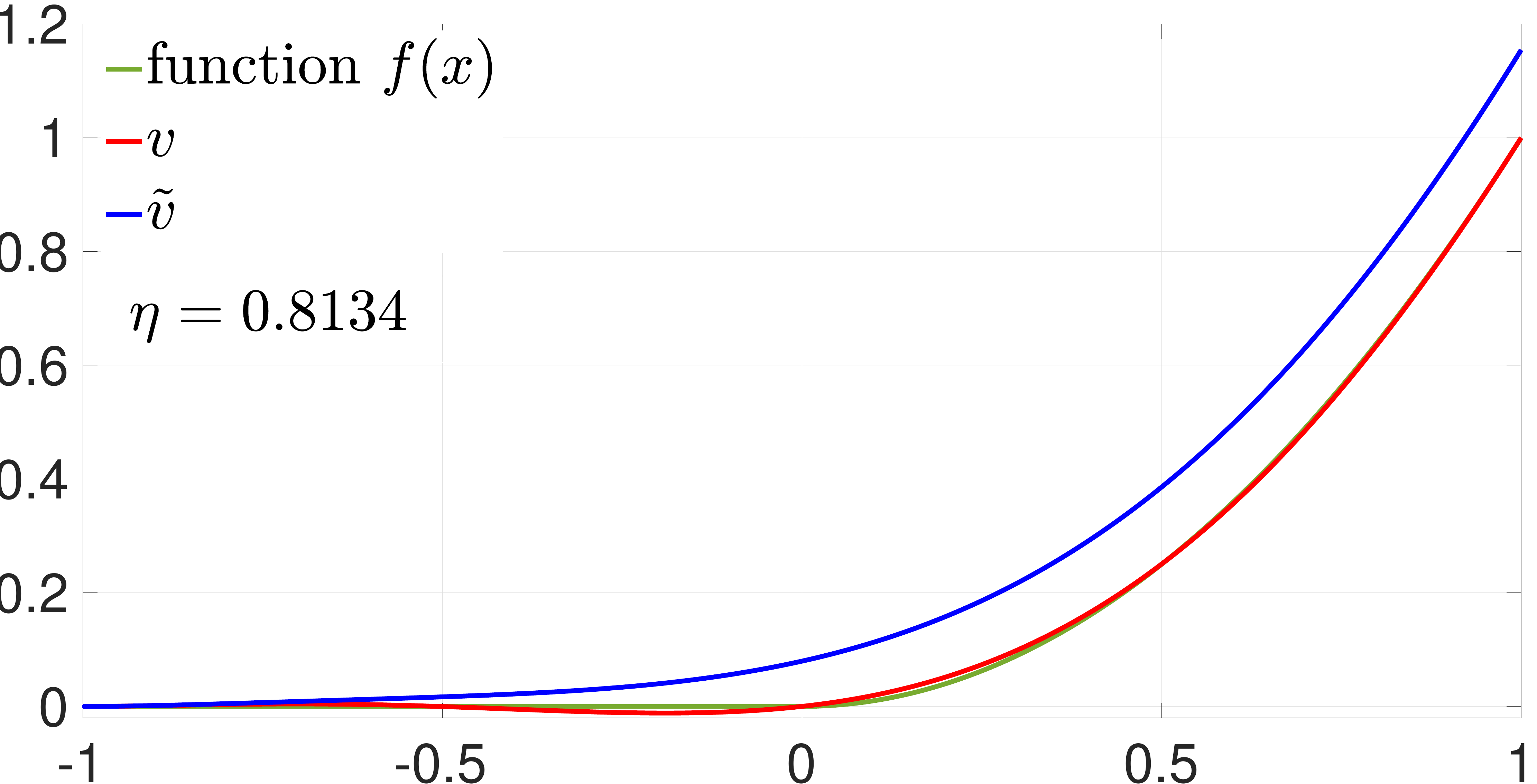}\\ 
\includegraphics[width=0.32\textwidth]{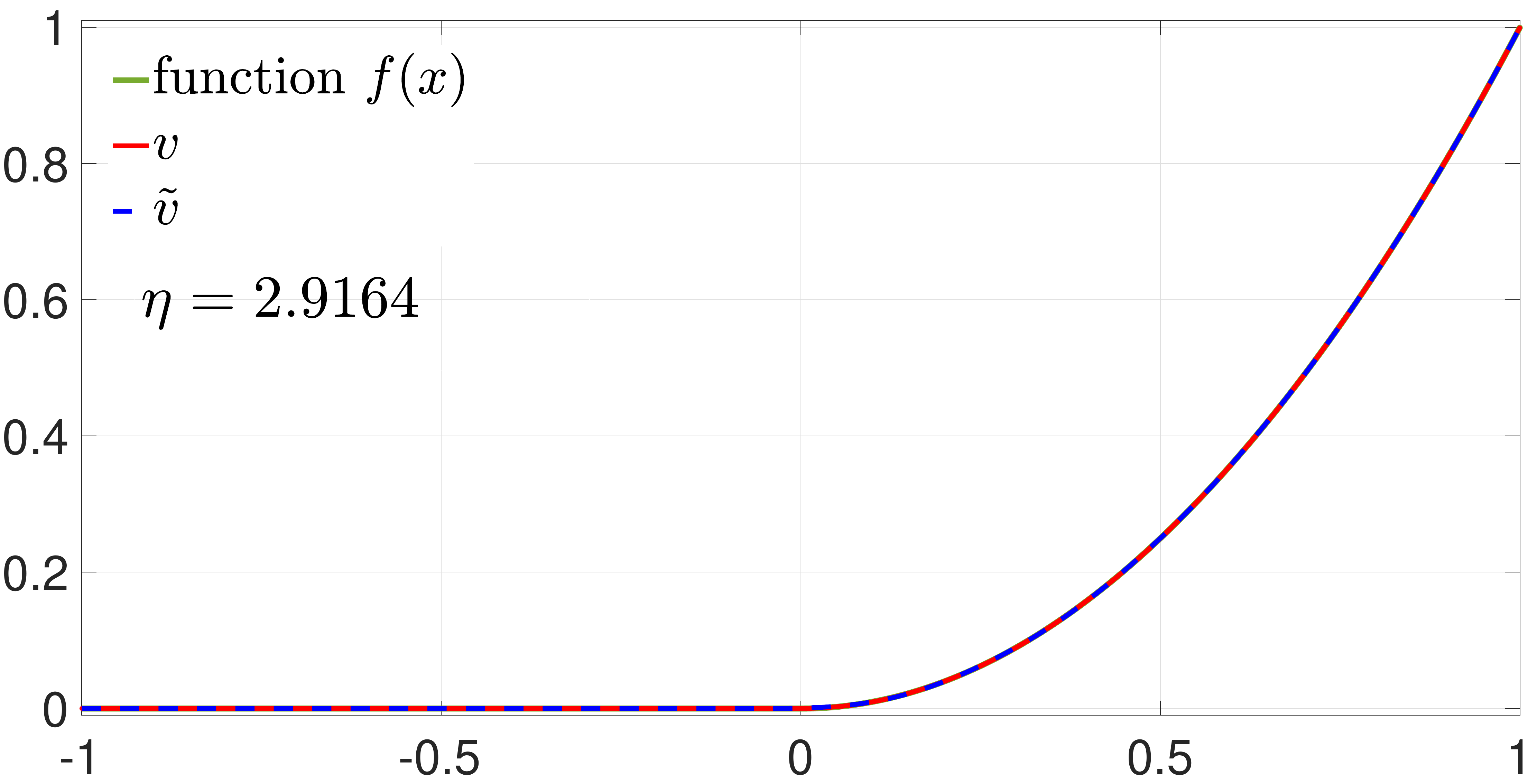}  
\includegraphics[width=0.32\textwidth]{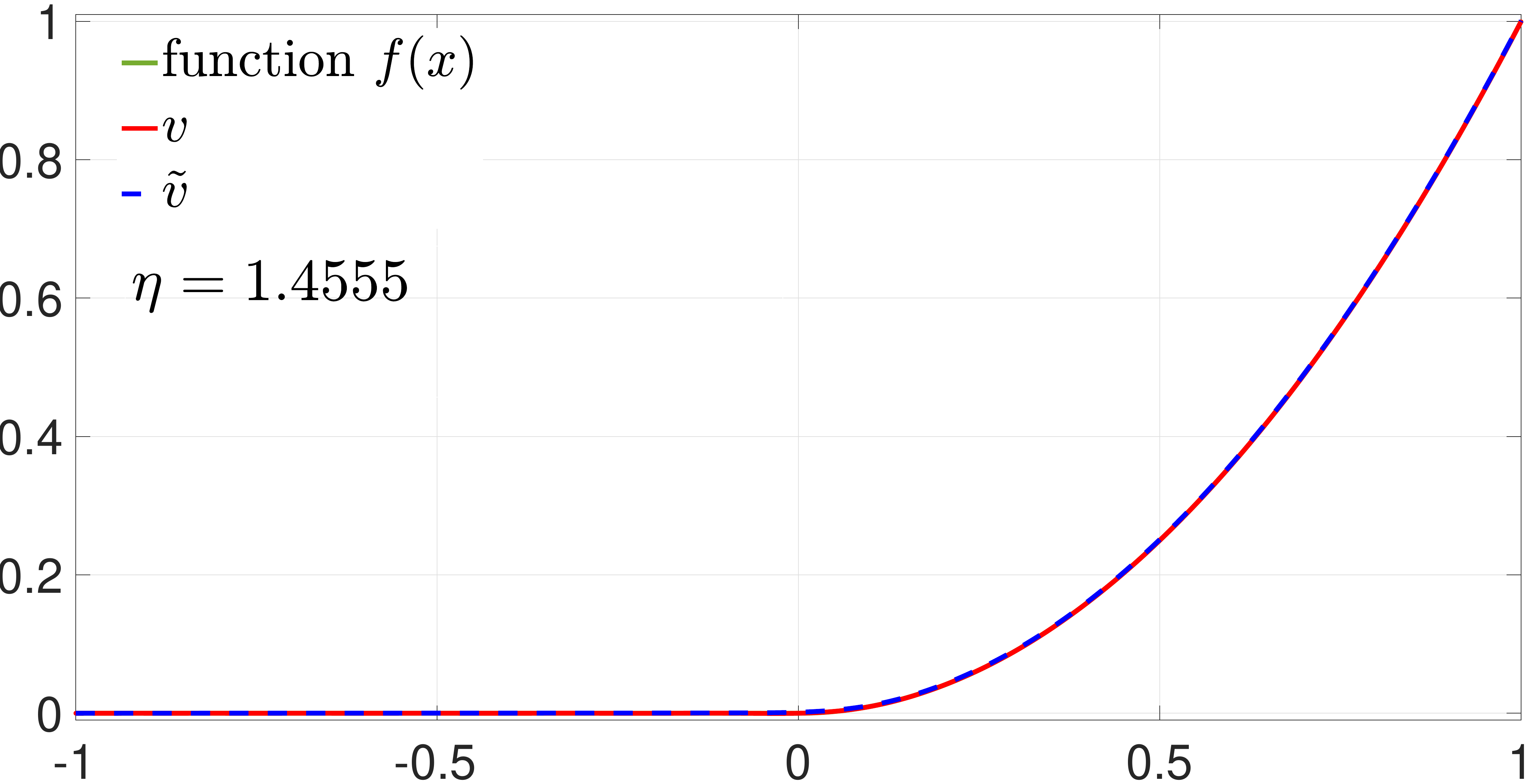}  
\includegraphics[width=0.32\textwidth]{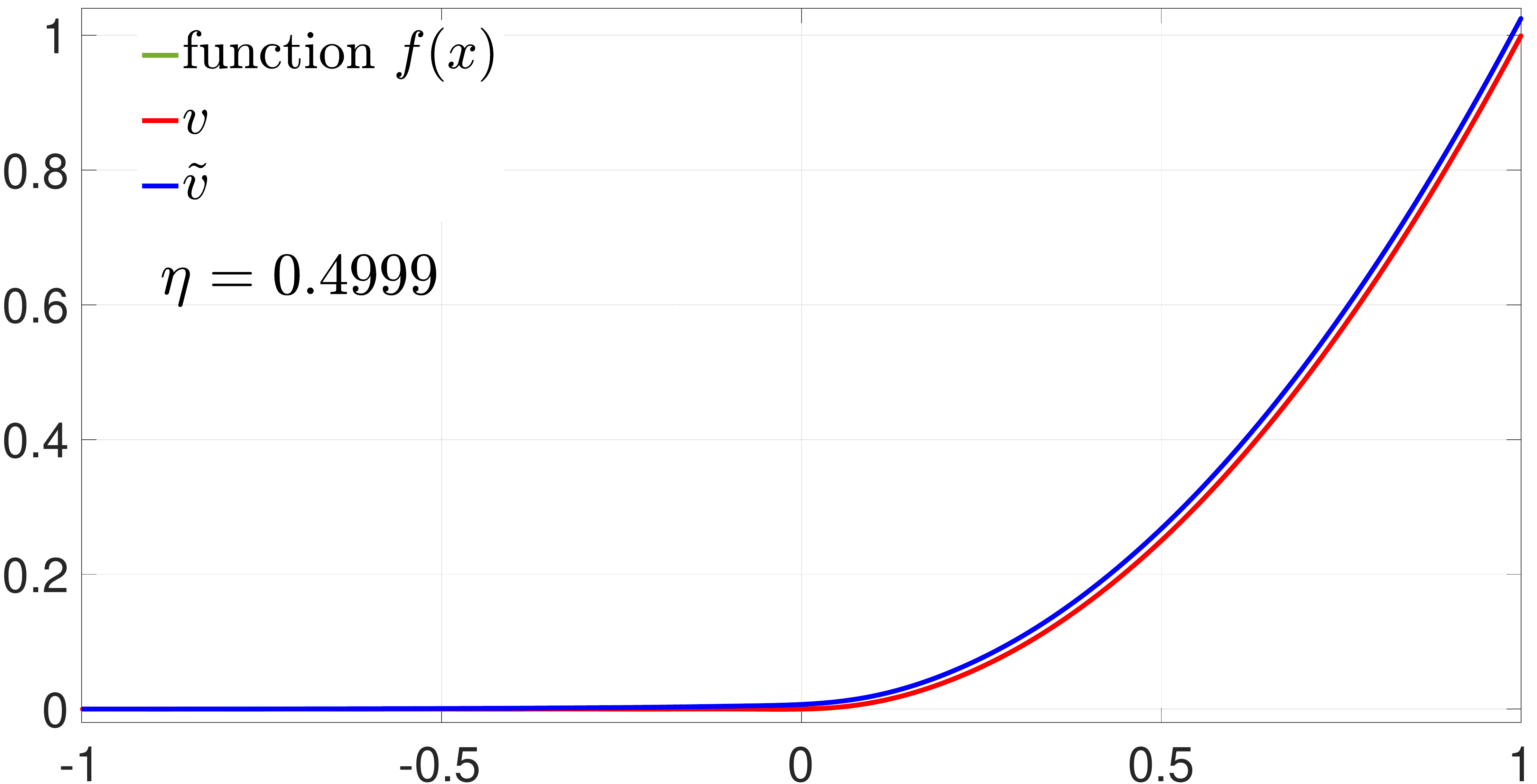}  
\caption{Test function $f_2$ for different polynomial spaces $V$ and ambient spaces $H$. The constraint is $E = F_0 \cap F_1 \cap F_2$. Top: $N = 6$, bottom: $N = 31$. Left: $H = H^0$. Center: $H = H^1$. Right: $H = H^2$. }
  \label{fig:constrained-f2}
\end{figure}

\subsection{Constrained approximation as a nonlinear filter}
The right-hand panels in Figure \ref{fig:constrained-step} show that the monotonicity constraint removes oscillations in the approximation. These empirical results suggest that the constrained optimization procedure is a type of spectral filter. There is a stronger theoretical motivation for this observation as well.
\begin{proposition}
  Let $E \subset V$ be a nonempty, closed, convex set in $H$. Given some $v \in V$, let $\tilde{v}$ be the solution to \eqref{eq:constopt-continuous} (i.e., also the solution to \eqref{eq:constopt-discrete}). If $0 \in E$, then, $\|\tilde{v}\| \leq \|v\|$.
\end{proposition}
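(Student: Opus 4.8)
The plan is to recognize $\tilde v$ as the $H$-orthogonal projection of $v$ onto the closed convex set $E$ and then exploit the standard variational characterization of such projections. First I would argue that $\tilde v = P_E v$, the $H$-nearest point to $v$ in $E$. This is essentially immediate in the reduced setting $\bs{A} = \bs{I}$ of \eqref{eq:reduced-discrete}, where the objective is precisely squared distance in the ambient norm (the orthonormal basis $\{v_j\}$ makes the coefficient map an isometry, so the Euclidean geometry on $\R^N$ coincides with the $H$-geometry on $V$). More generally, when $v$ is the unconstrained $H$-best approximation of the underlying data in $V$ and $E \subset V$, the Pythagorean identity $\|f - w\|^2 = \|f - v\|^2 + \|v - w\|^2$ for $w \in E \subset V$ shows that minimizing the data-fitting objective over $E$ coincides with minimizing $\|v - w\|$ over $w \in E$; hence $\tilde v = P_E v$ in either case. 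Existence and uniqueness of $\tilde v$ are already guaranteed by \cref{thm:solution}.

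Next I would invoke the projection characterization for a nonempty, closed, convex set: $\tilde v = P_E v$ if and only if
\begin{align*}
  \langle v - \tilde v, \; w - \tilde v \rangle \leq 0 \qquad \text{for all } w \in E.
\end{align*}
The key step is to activate the hypothesis $0 \in E$ by choosing the admissible test point $w = 0$, which yields $\langle v - \tilde v, \, -\tilde v \rangle \leq 0$, equivalently $\|\tilde v\|^2 \leq \langle v, \tilde v \rangle$.

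Finally I would close the argument with the Cauchy--Schwarz inequality, $\langle v, \tilde v \rangle \leq \|v\| \, \|\tilde v\|$, so that $\|\tilde v\|^2 \leq \|v\| \, \|\tilde v\|$. If $\tilde v = 0$ the claim holds trivially; otherwise dividing by $\|\tilde v\| > 0$ gives $\|\tilde v\| \leq \|v\|$, as desired.

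The computation in the last two paragraphs is entirely routine, so the only genuine obstacle is the first step: making precise that $\tilde v$ is truly an $H$-orthogonal projection onto $E$, rather than a projection in some data-weighted geometry, since the conclusion is stated in the ambient $H$-norm. The Pythagorean reduction above is what resolves this, and it is exactly the assumption that $v$ is the unconstrained best approximation (equivalently, the $\bs{A} = \bs{I}$ normalization) that makes the data-fitting minimizer and the $H$-projection agree.
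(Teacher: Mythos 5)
Your proof is correct, but it takes a different route from the paper's. The paper disposes of the result in two lines by citing the classical fact that projections onto closed convex sets in Hilbert spaces are nonexpansive (Cheney--Goldstein): $\|P(v) - P(0)\| \leq \|v - 0\|$, together with $P(0) = 0$ since $0 \in E$. You instead re-derive exactly the special case needed, from first principles: the variational (obtuse-angle) characterization $\langle v - \tilde{v}, w - \tilde{v}\rangle \leq 0$ for all $w \in E$, tested at the admissible point $w = 0$, followed by Cauchy--Schwarz. This is essentially the standard proof of nonexpansiveness specialized to the case where one of the two points is a fixed point of $P$, so your argument is self-contained where the paper's is a black-box citation; the paper's route, in exchange, is shorter and immediately yields the stronger Lipschitz property $\|P(v) - P(w)\| \leq \|v - w\|$ for arbitrary pairs. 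A further point in your favor: both proofs hinge on the identification $\tilde{v} = P_E v$ in the \emph{ambient $H$-geometry}, which the paper silently assumes, whereas you justify it explicitly via the Pythagorean identity $\|f - w\|^2 = \|f - v\|^2 + \|v - w\|^2$ (valid because $f - v \perp V$ when $v$ is the unconstrained $H$-best approximation, matching the setting of Section \ref{sec:results}), correctly flagging that for a general design matrix $\bs{A}$ the constrained minimizer lives in a data-weighted geometry and the identification would fail.
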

\begin{proof}
  Projections onto closed convex sets in Hilbert spaces are nonexpansive \cite{cheney_proximity_1959}. I.e., $\| \tilde{v} - P(0) \| \leq \| v - 0\|$, where $P: V \rightarrow E$ is the projection operator from $V$ to $E$. Since $0 \in E$, then $P(0) = 0$.
\end{proof}
In general, the assumption that $E$ is closed and convex is automatically satisfied from our apparatus in Sections \ref{sec:setup} and \ref{sec:method}. The only nontrivial requirement is that $v = 0$ is a member of the constraint set $E$. All the examples in Figures \ref{fig:constrained-step} and \ref{fig:constrained-f2} satisfy $0 \in E$, and thus we expect that the optimization problem decreases the norm of the function, just as a standard linear filter would. Note, however, that our ``filter'' (optimization) is a nonlinear map.

To illustrate this filter interpretation, we compare in Figures \ref{fig:spectrum-step} and \ref{fig:spectrum-f2} the magnitude of the before-optimization and after-optimization expansion coefficients. These figures correspond to the experiments in Figures \ref{fig:constrained-step} and \ref{fig:constrained-f2}, respectively.

For the step function example shown in Figure \ref{fig:spectrum-step}, we see that when monotonicity is enforced, there is a steeper decay of the higher order coefficients in the constrained approximation. The stronger decay of coefficients is also observed when only positivity/boundedness is enforced, but the increase in decay is less pronounced. All these observations are qualitatively consistent with Figure \ref{fig:constrained-step}. We emphasize that this constrained optimization procedure is nonlinear, so that our approximation cannot easily be written in coefficient space as a standard (linear) spectral filter.



\begin{figure}[htbp]
\begin{center}
  \includegraphics[width=0.32\textwidth]{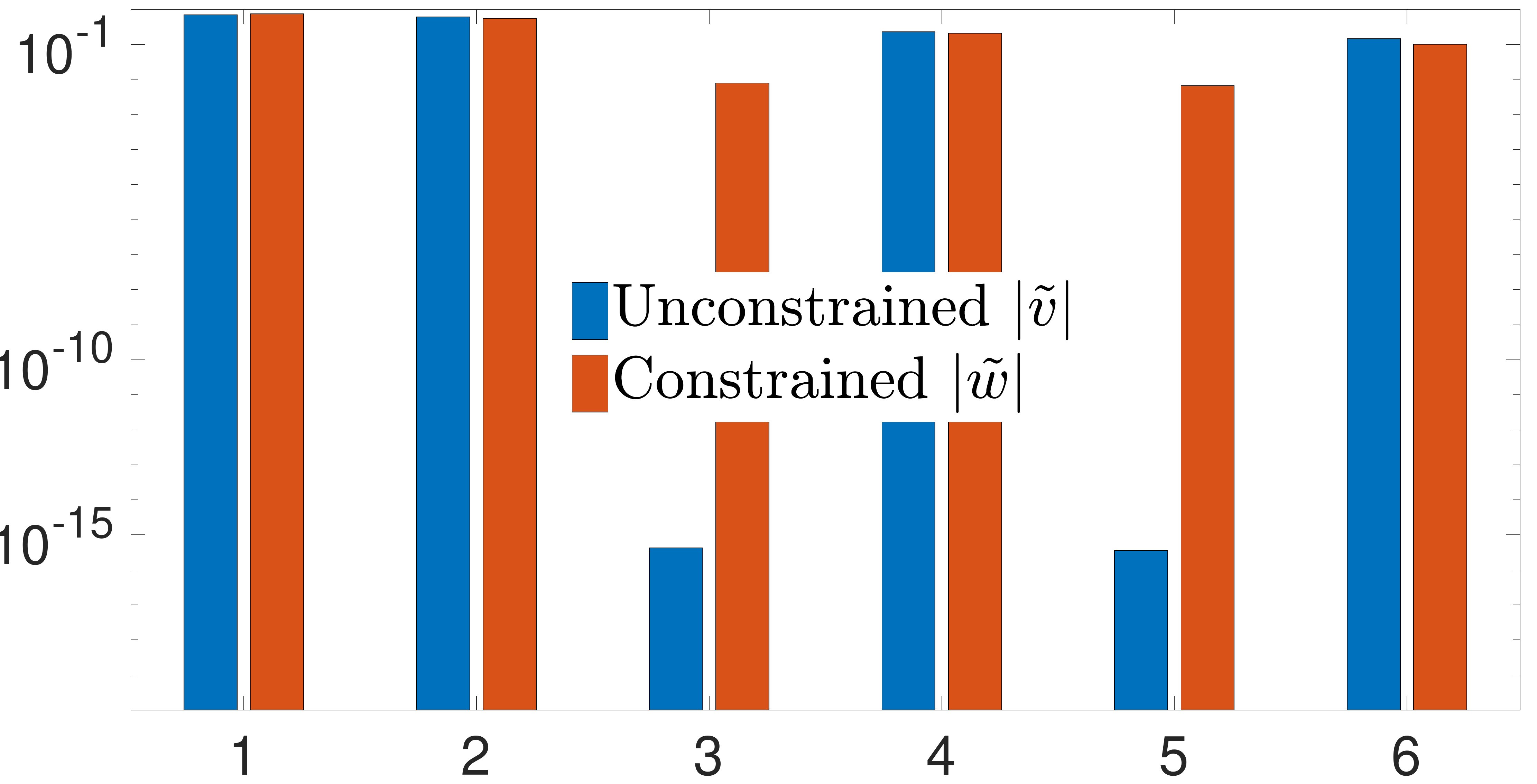}
  \includegraphics[width=0.32\textwidth]{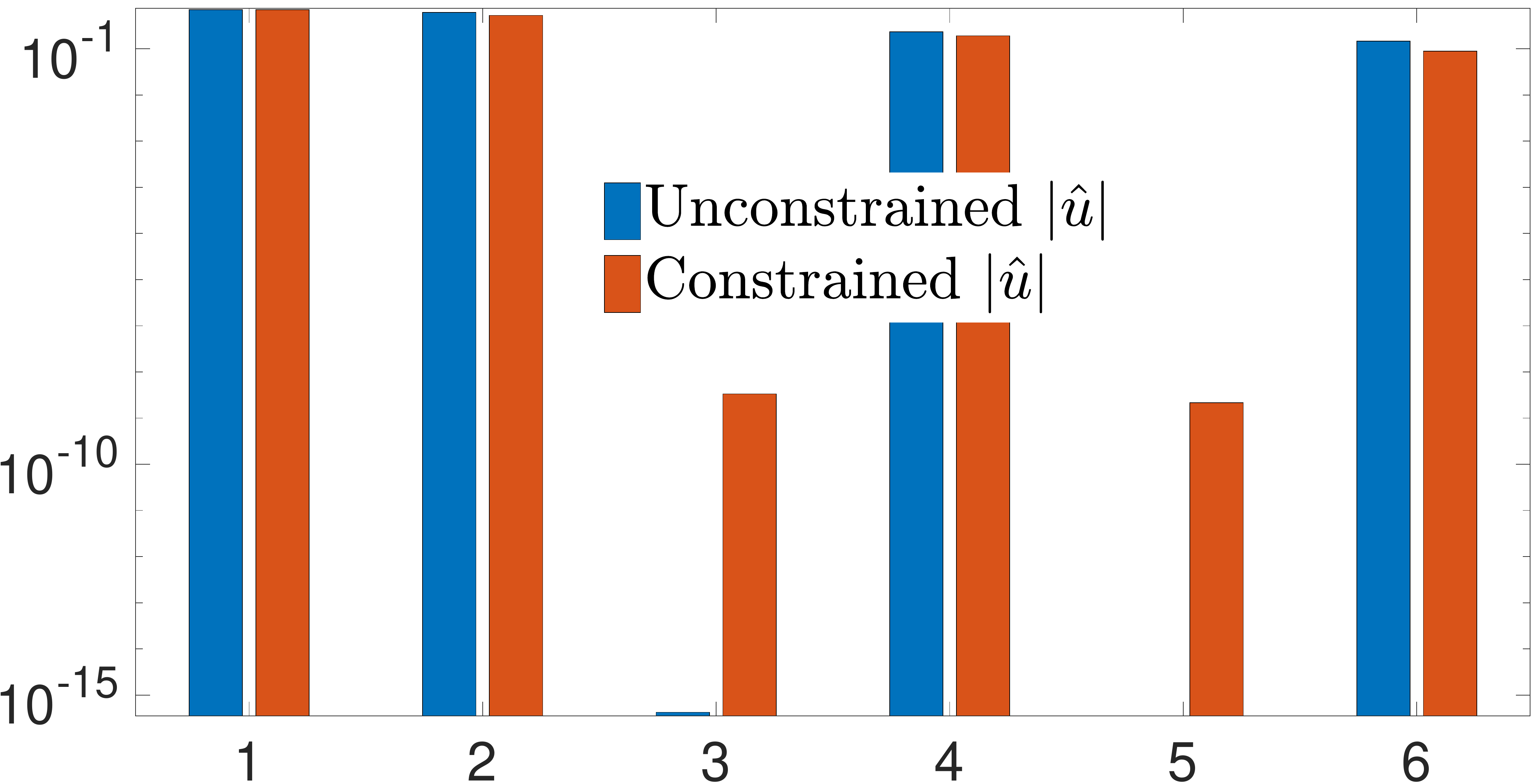}
  \includegraphics[width=0.32\textwidth]{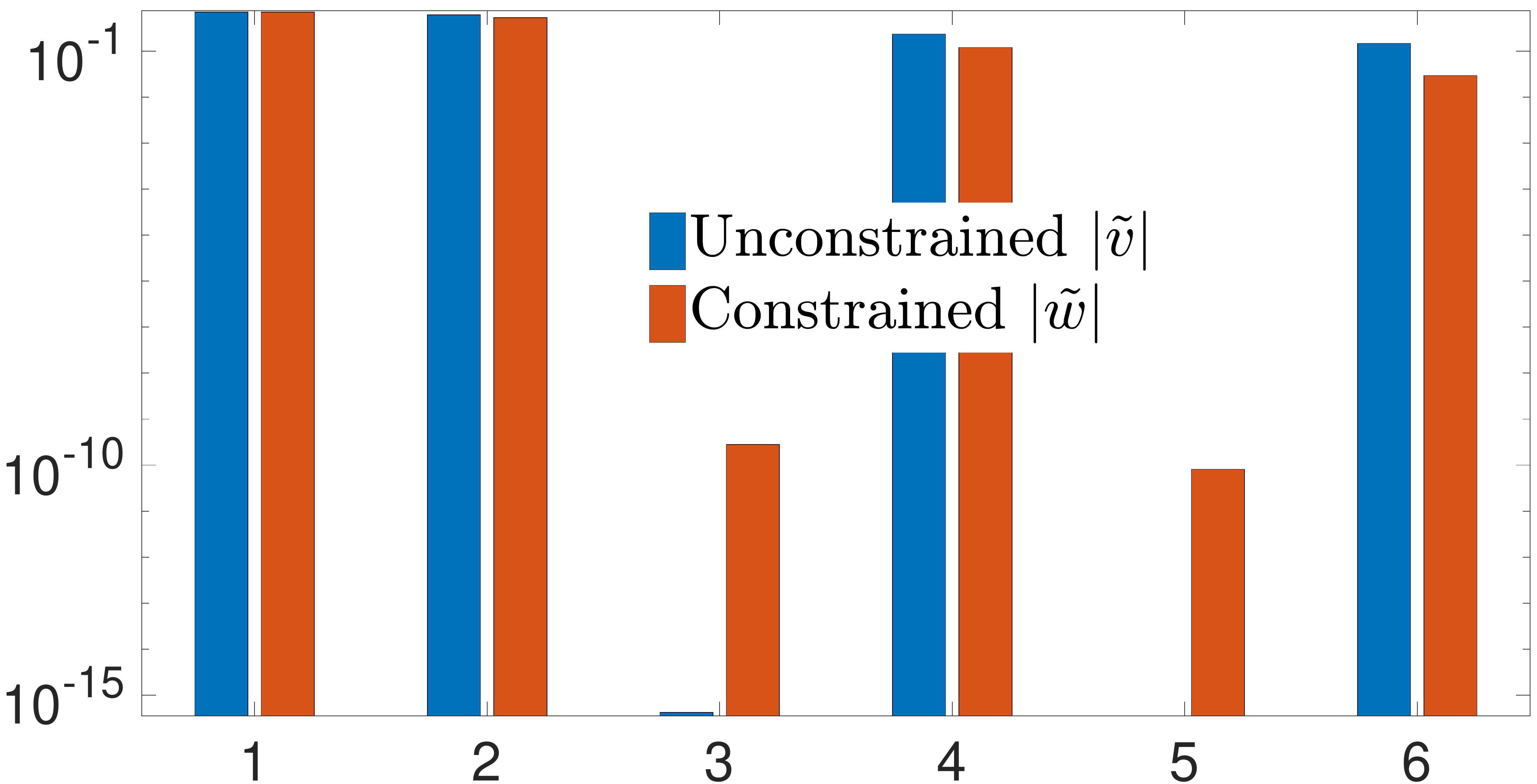} \\
  \includegraphics[width=0.32\textwidth]{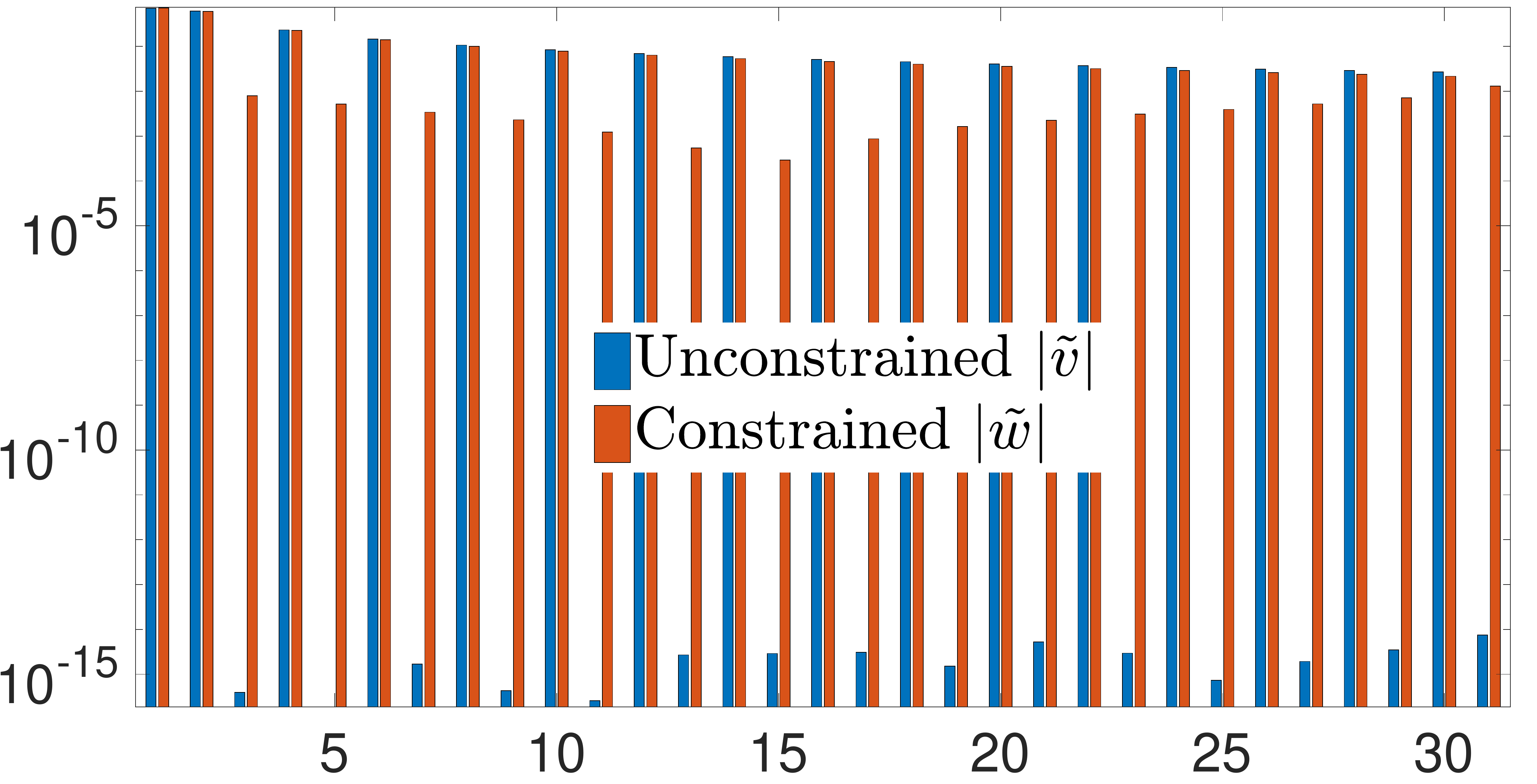}
  \includegraphics[width=0.32\textwidth]{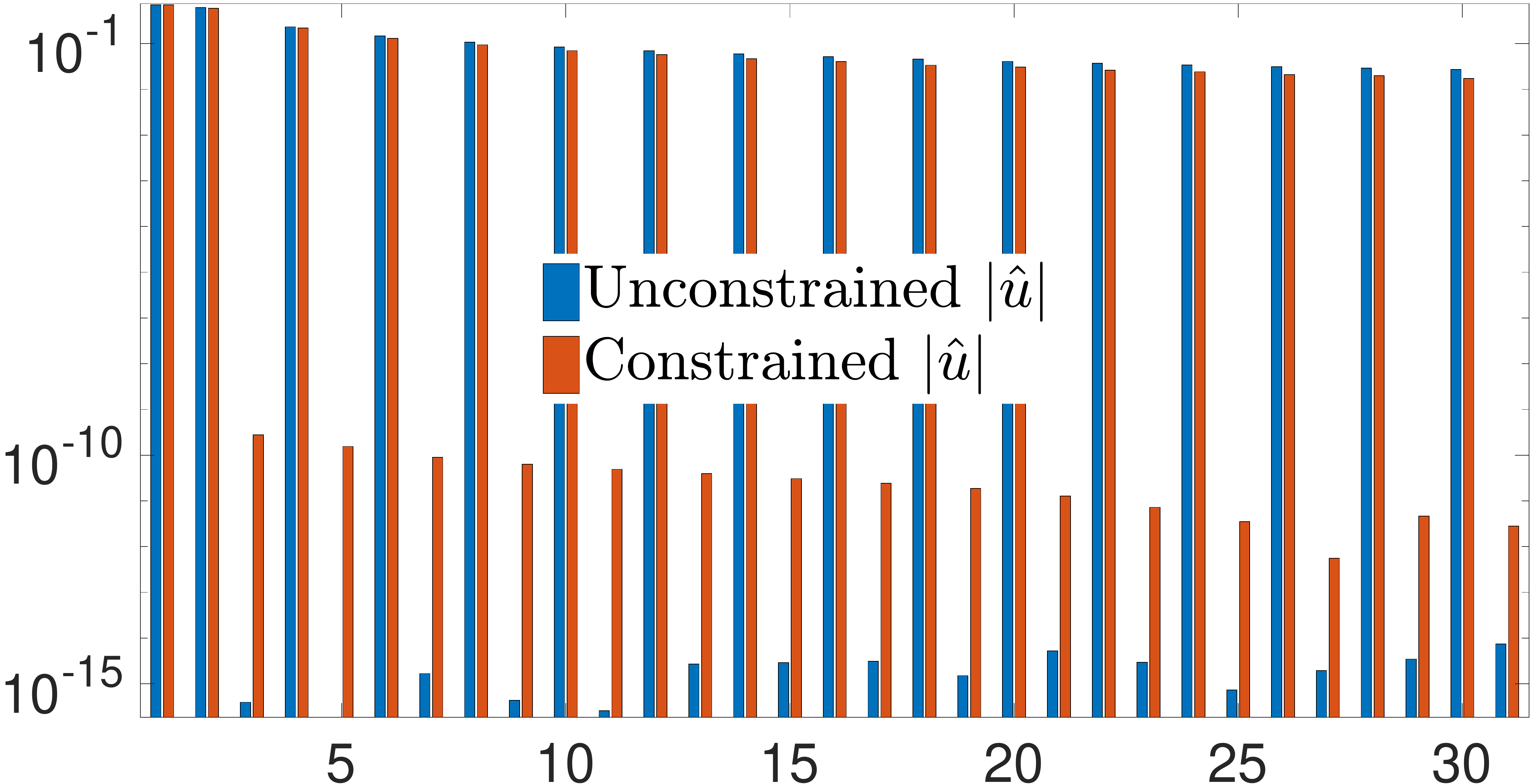}
  \includegraphics[width=0.32\textwidth]{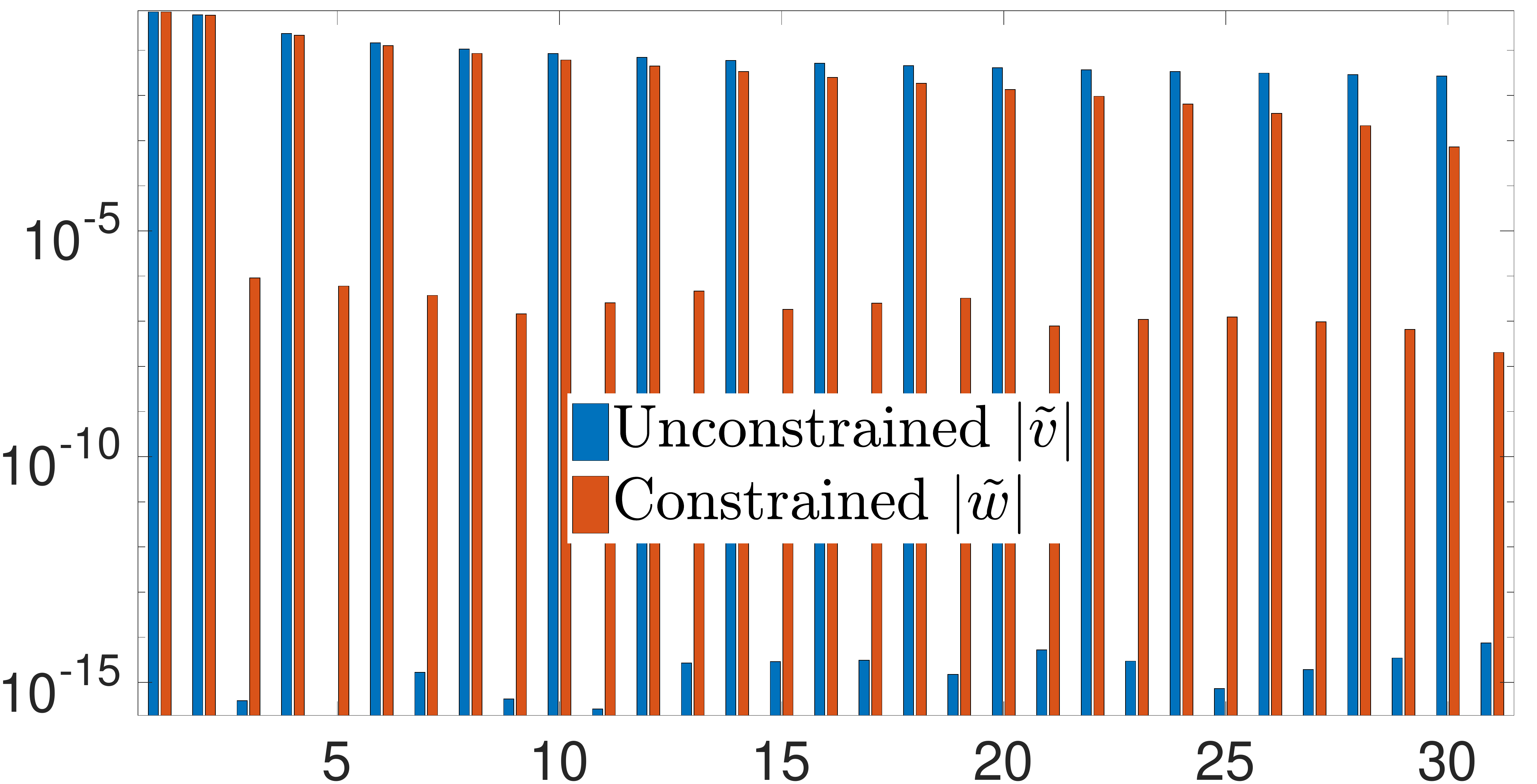}
\end{center}
\caption{
  Companion to Figure \ref{fig:constrained-step}. Bar plot showing unconstrained projection coefficients magnitude $|\widetilde{v}_j|$ vs various constrained projection coefficients magnitude $|\widetilde{w}_j|$. Top: $N = 6$. Bottom: $N=31$. Left: Constraint $E = F_0$. Center: Constraint $E = F_0 \cap G_0$. Right: Constraint $E = F_0 \cap G_0 \cap F_1$. 
}\label{fig:spectrum-step}
\end{figure}

\begin{figure}[htbp]
  \begin{center}
    \includegraphics[width=0.32\textwidth]{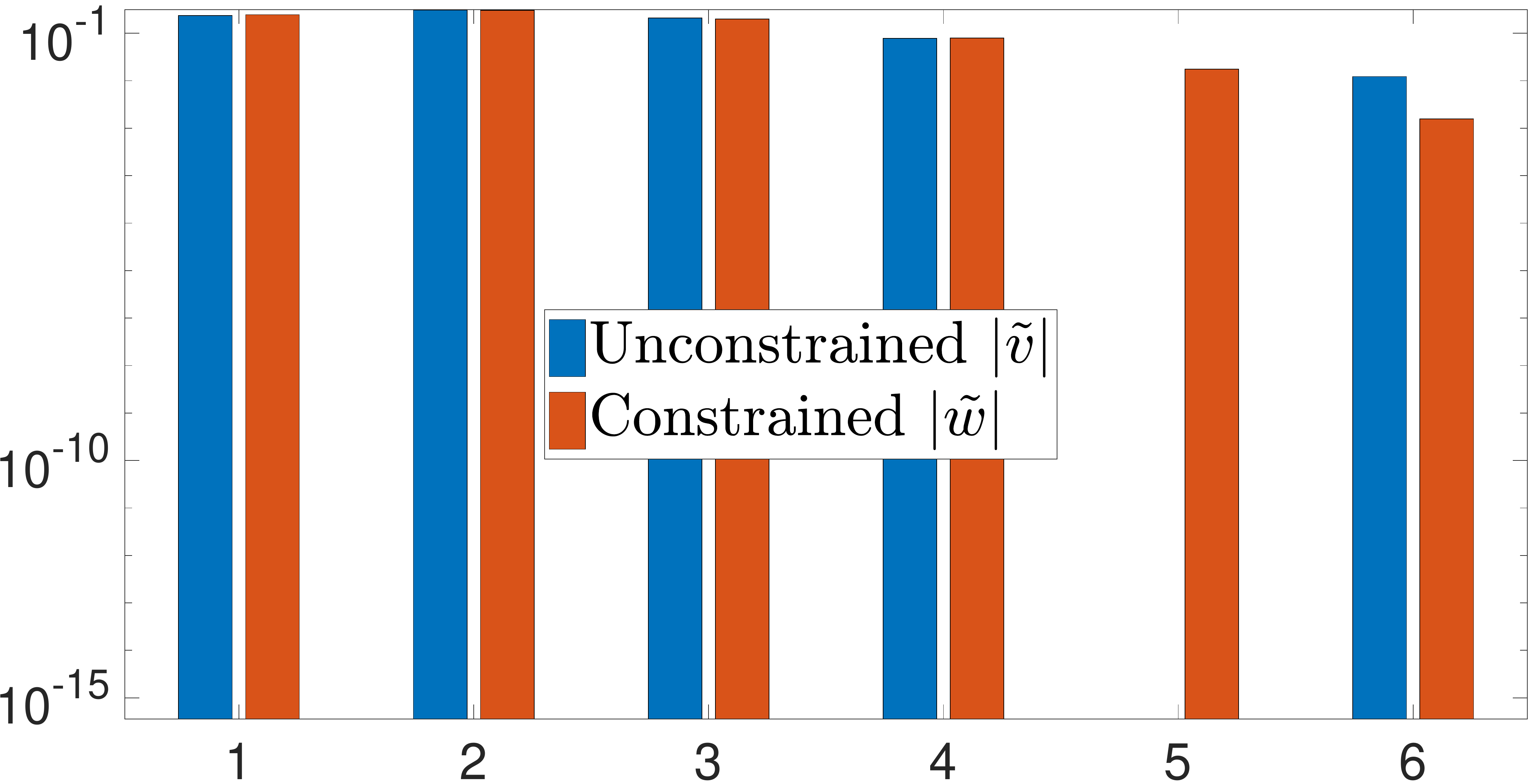}
    \includegraphics[width=0.32\textwidth]{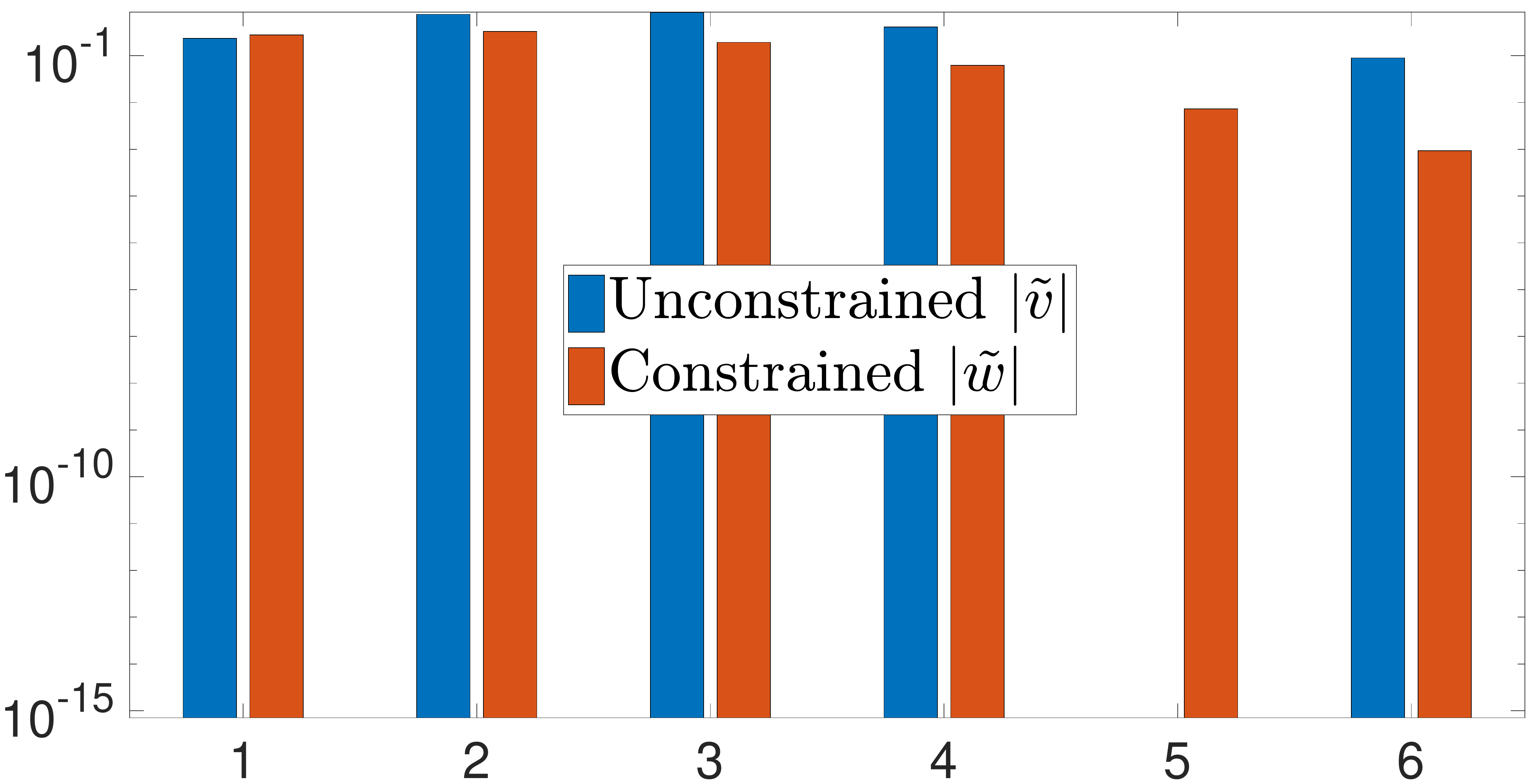}
    \includegraphics[width=0.32\textwidth]{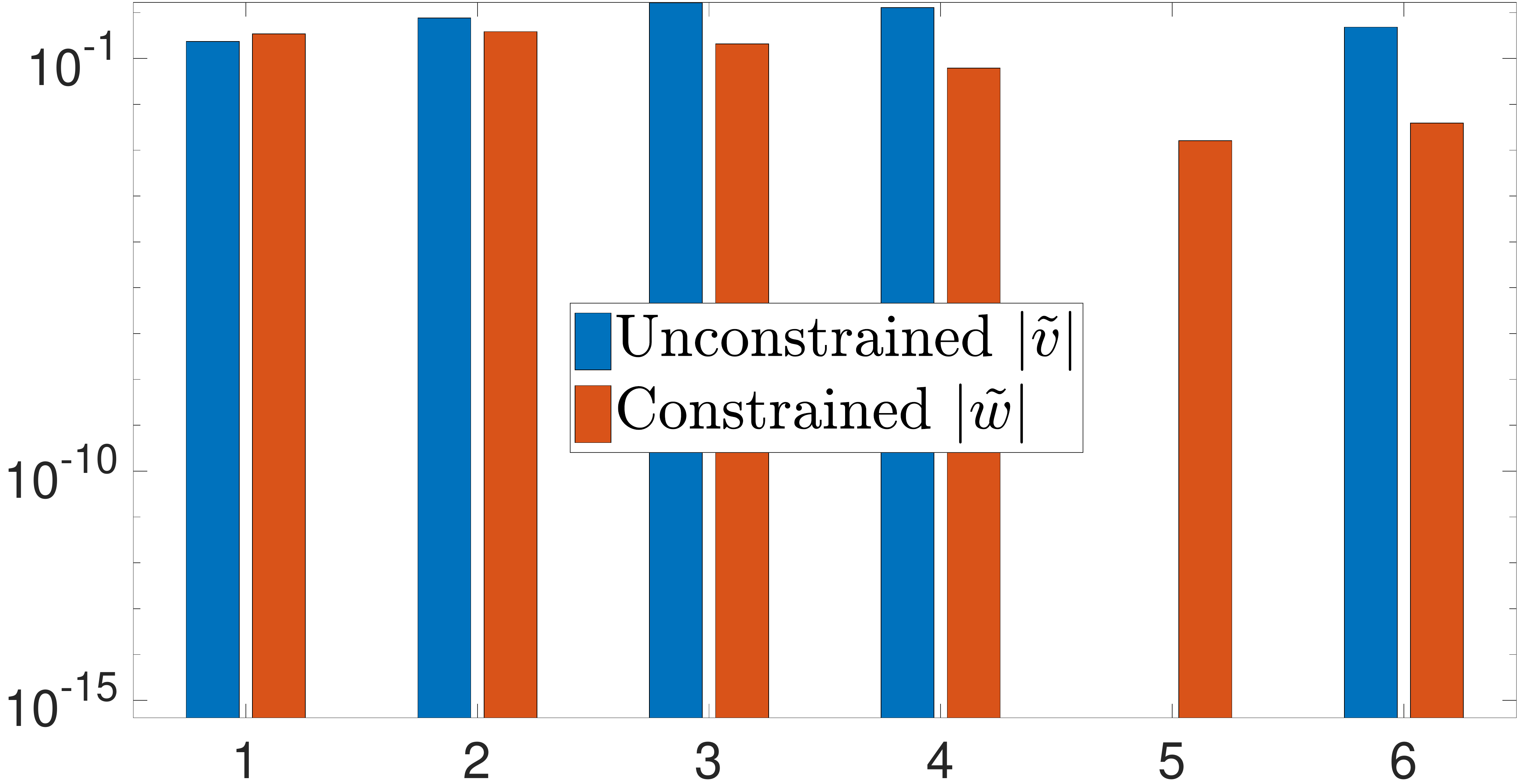} \\
    \includegraphics[width=0.32\textwidth]{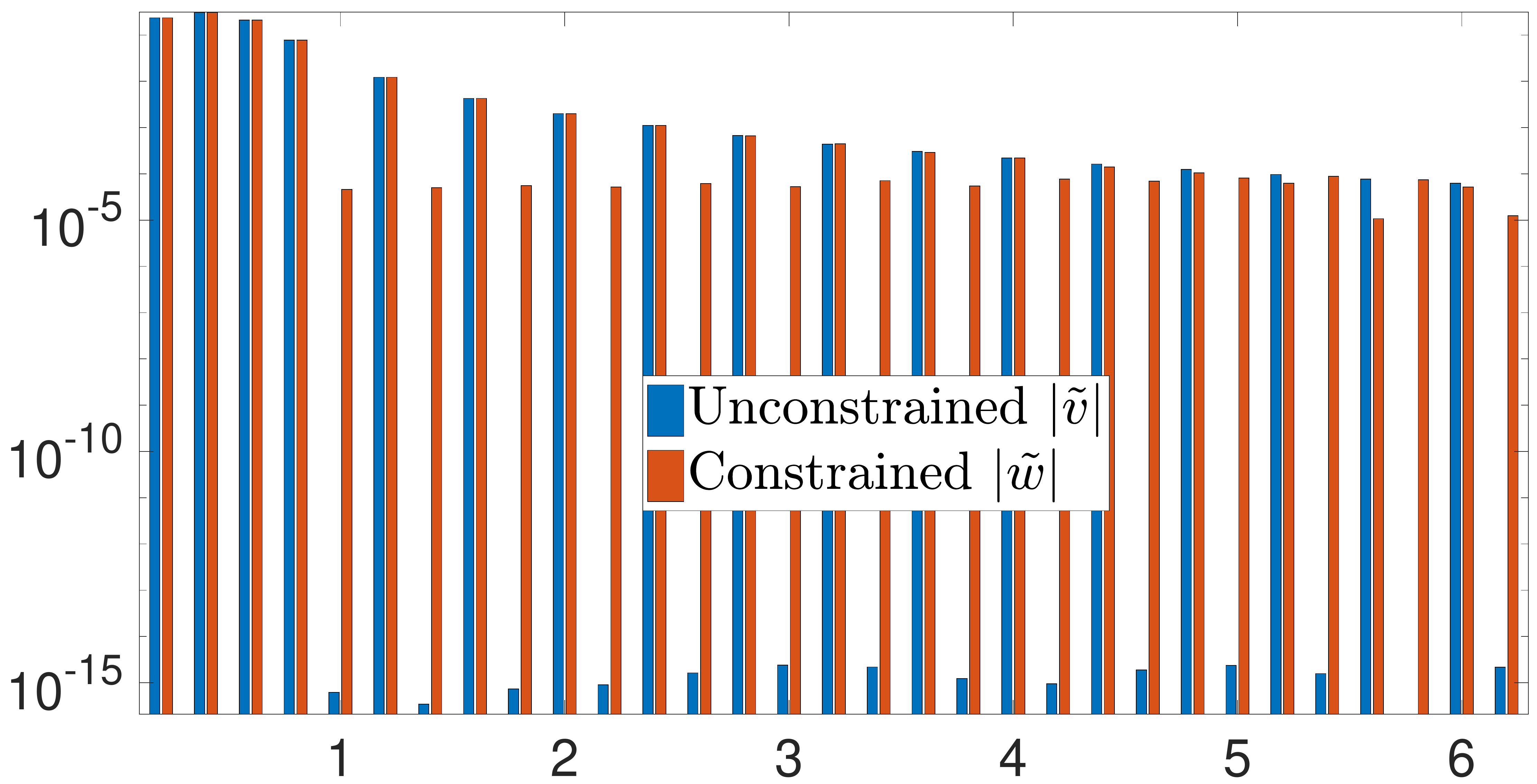}
    \includegraphics[width=0.32\textwidth]{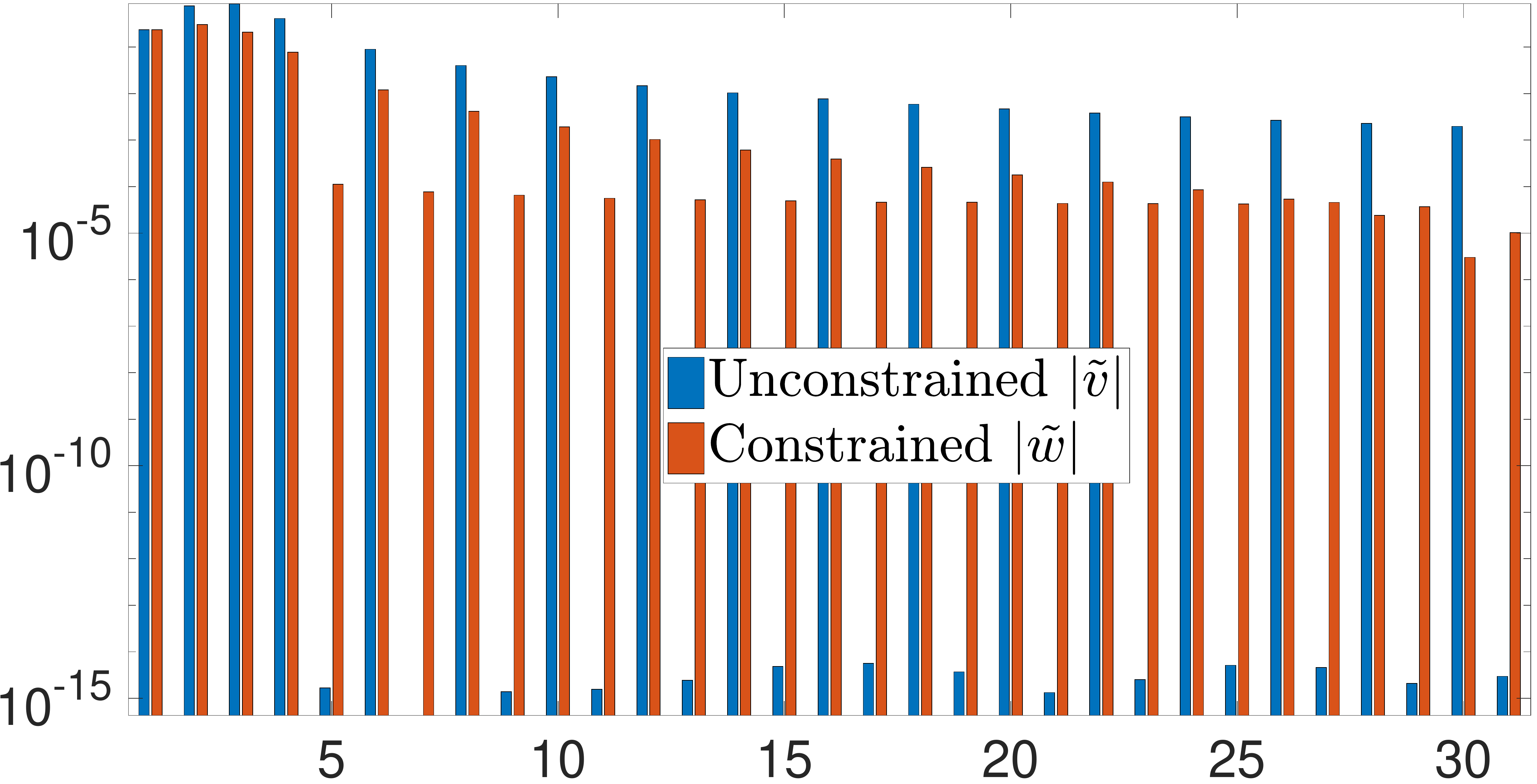}
    \includegraphics[width=0.32\textwidth]{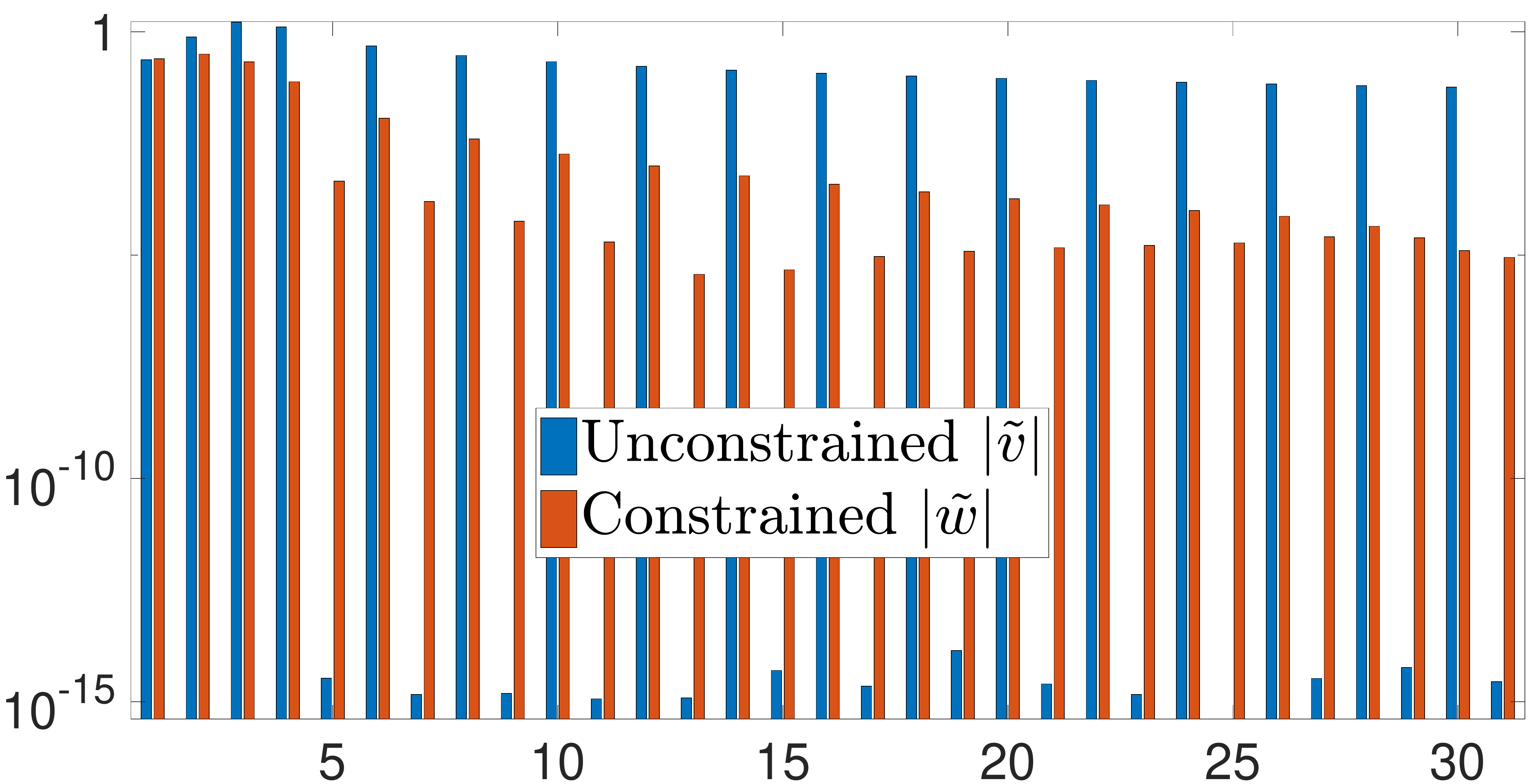}
  \end{center}
  \caption{Companion to Figure \ref{fig:constrained-f2}. Bar plot showing unconstrained projection coefficients magnitude $|\widetilde{v}_j|$ vs various constrained projection coefficients magnitude $|\widetilde{w}_j|$. Top: $N = 6$, bottom: $N = 31$. Left: $H = H^0$. Center: $H = H^1$. Right: $H = H^2$.}  \label{fig:spectrum-f2}
\end{figure}

\subsection{Convergence rates}
Optimal Hilbert space projections of smooth functions onto polynomial spaces converge at a rate commensurate with the function smoothness. We investigate in this section whether the corresponding \textit{constrained} projections have similar convergence rates. In Figure \ref{fig:conv-plot-f} we show convergence of $H = L^2$-optimal (unconstrained) polynomial projections versus the output from our constrained optimization procedure. Our constrained approximations are less accurate, but the convergence \textit{rates} are unchanged.

\begin{figure}[htbp]
\begin{center}
  \includegraphics[width=0.32\textwidth]{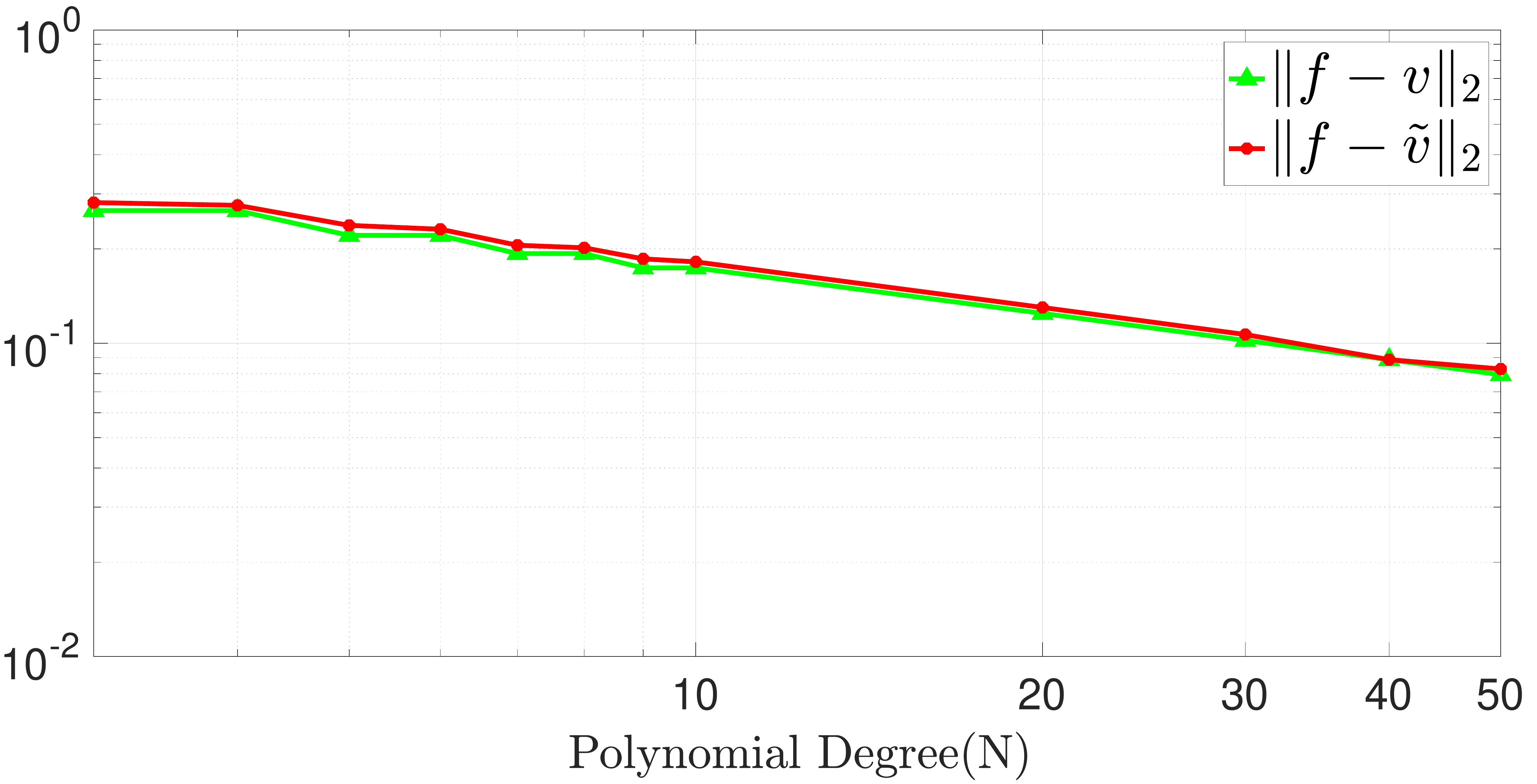}
  \includegraphics[width=0.32\textwidth]{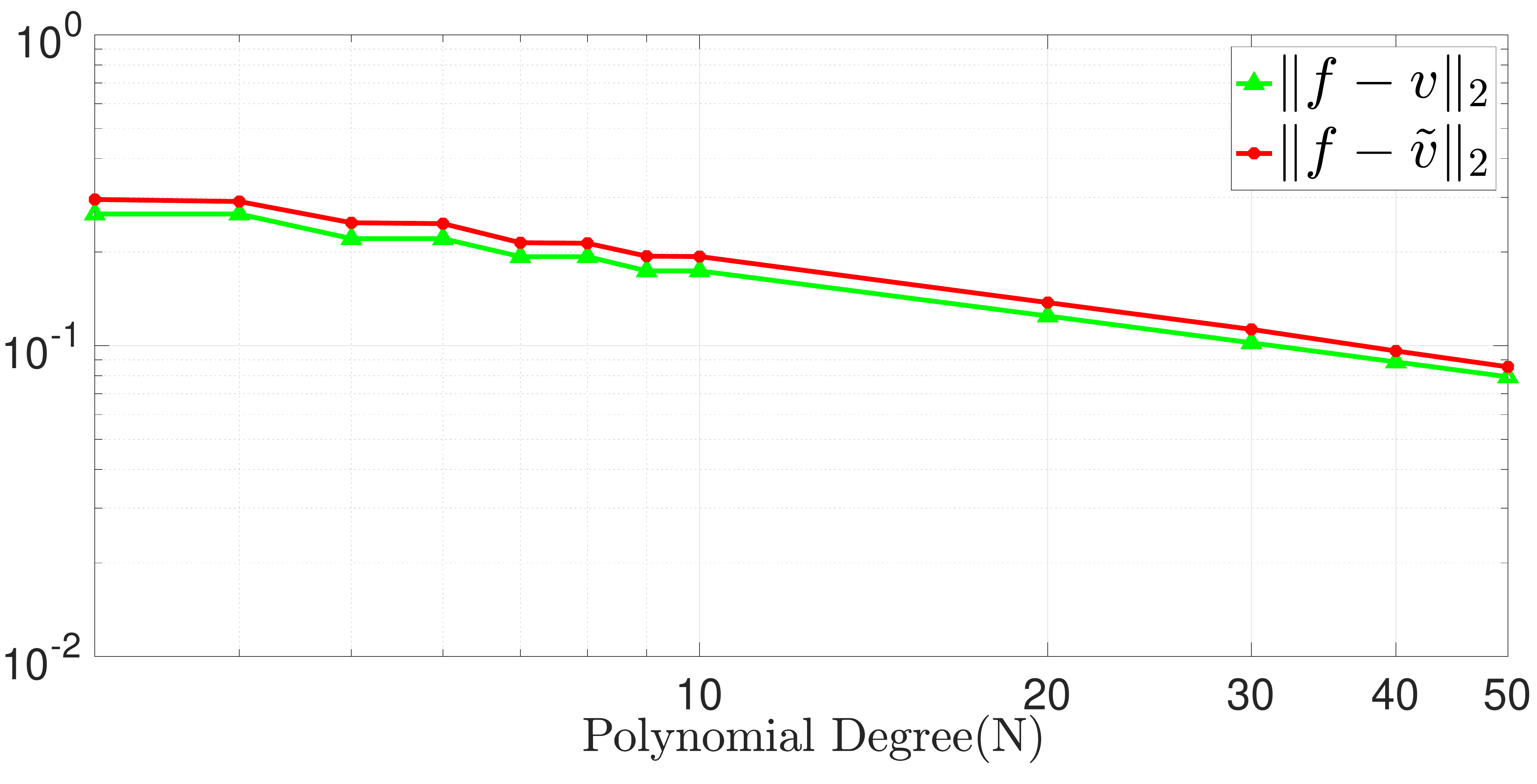}
  \includegraphics[width=0.32\textwidth]{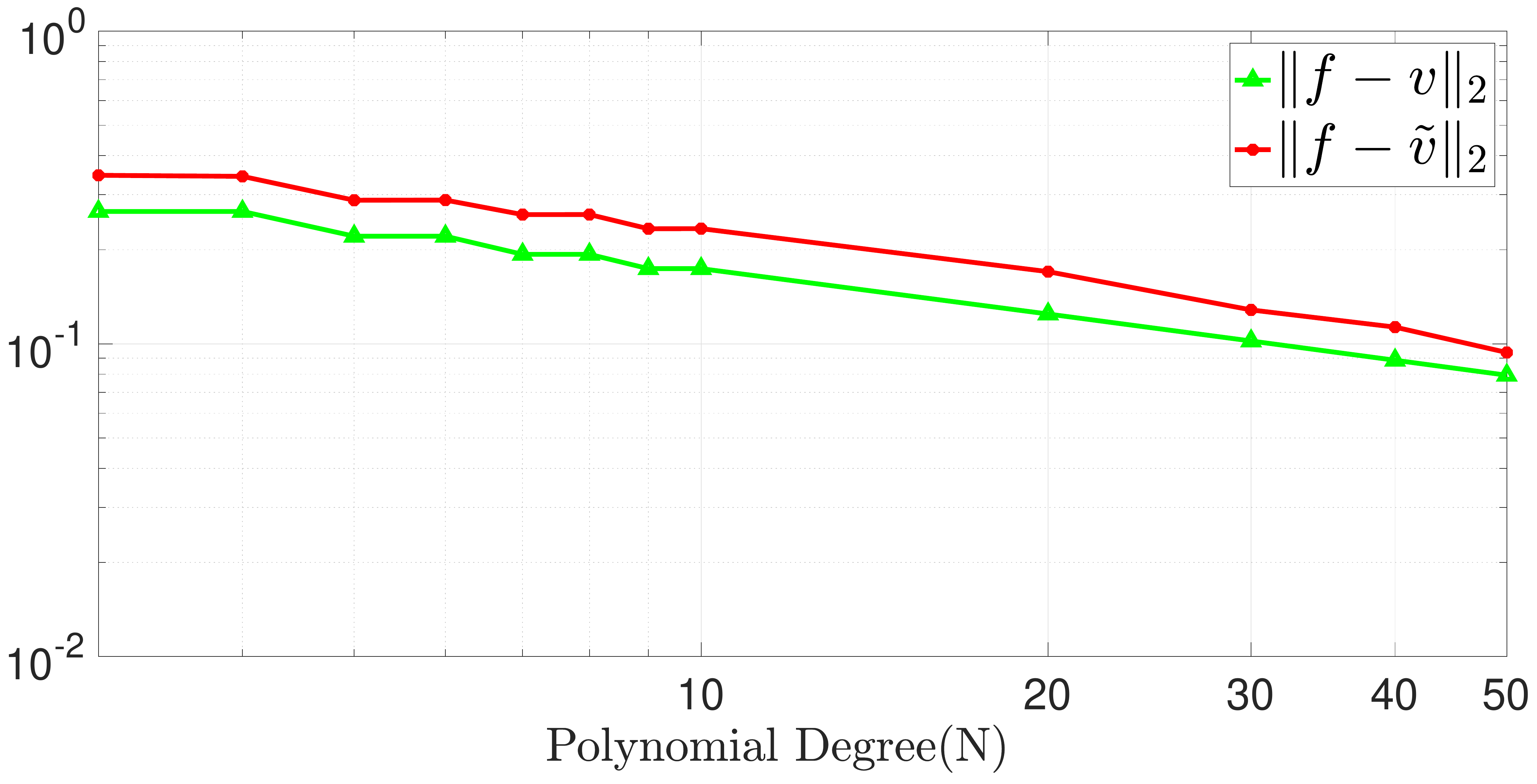} \\
  \includegraphics[width=0.32\textwidth]{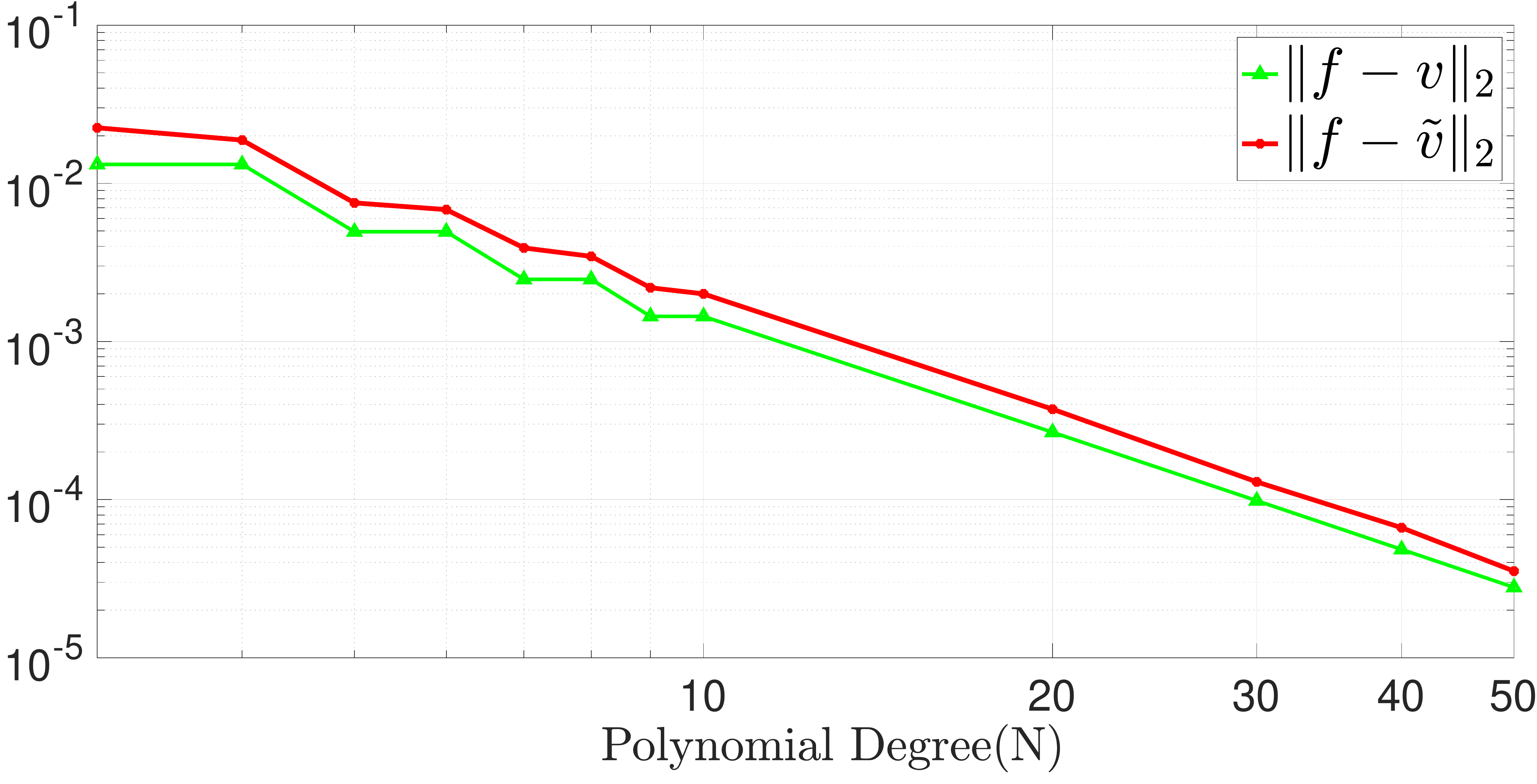}
  \includegraphics[width=0.32\textwidth]{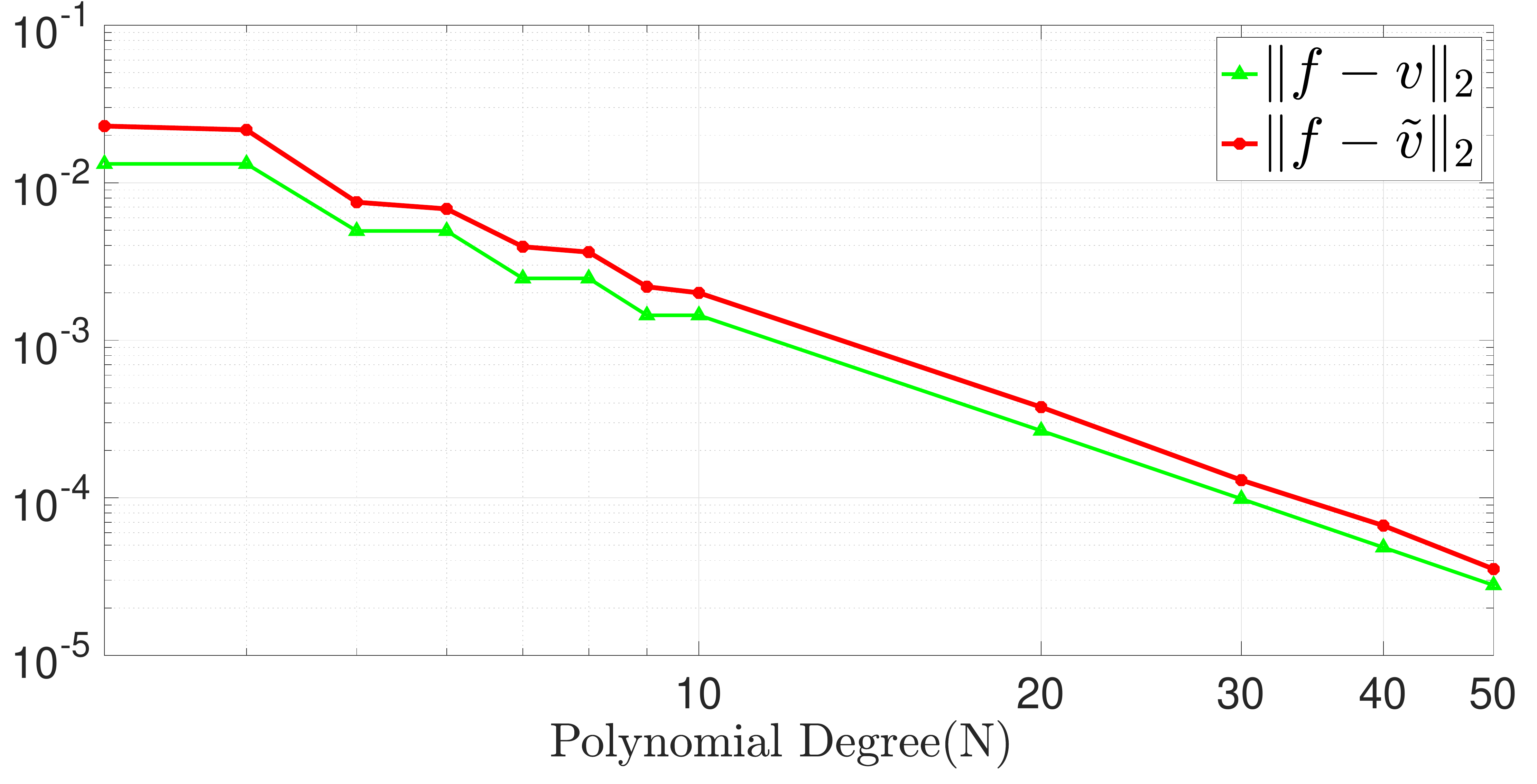}
  \includegraphics[width=0.32\textwidth]{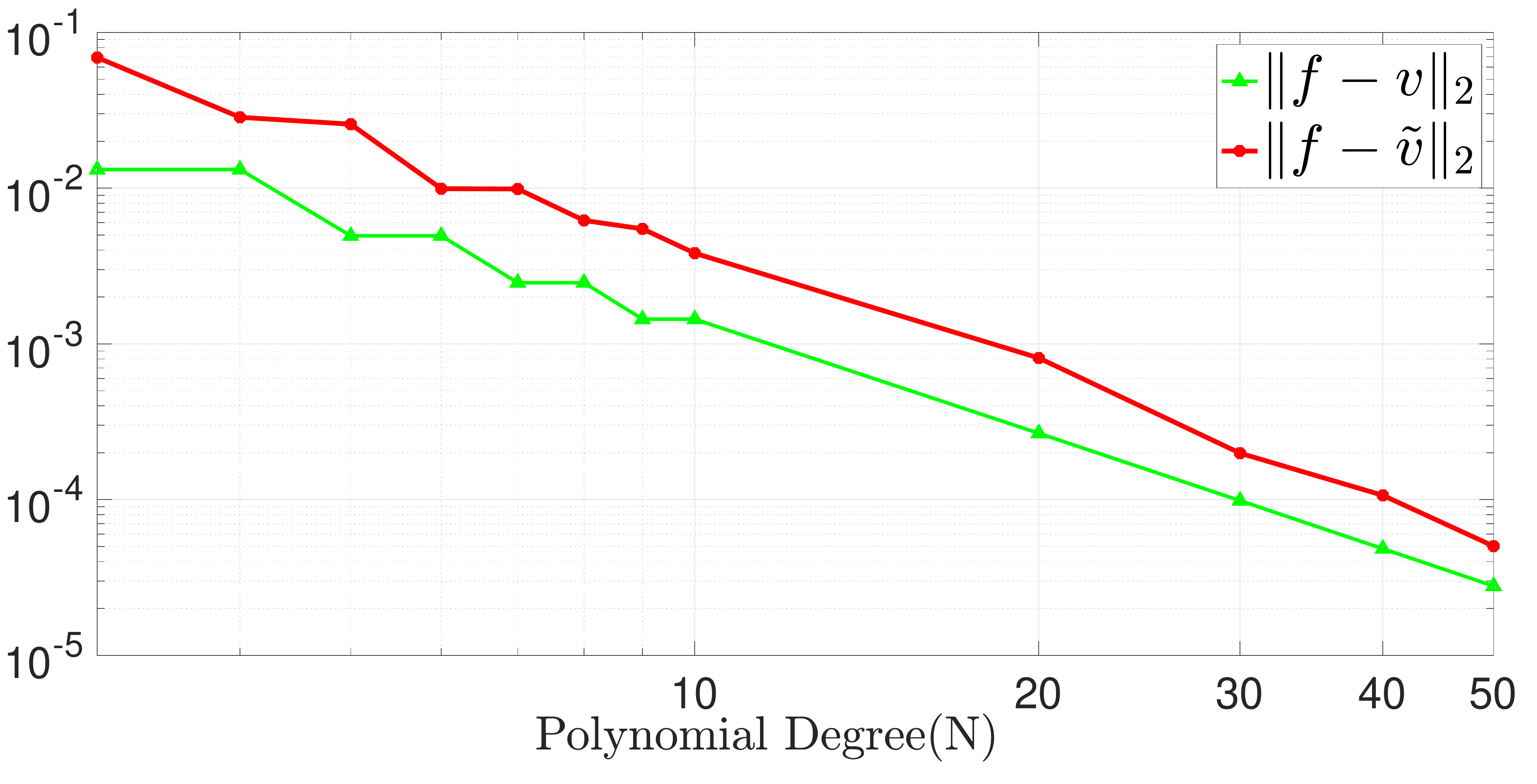}
\end{center}
\caption{$H = H^0$ convergence results for projection of test function $f = f_0$ (top row) and $f = f_2$ (bottom row). $V$ is a space of polynomials of degree $N$. Left: Constraint $E = F_0$. Center: Constraint $E = F_0 \cap G_0$. Right: Constraint $E = F_0 \cap G_0 \cap F_1$.}\label{fig:conv-plot-f}
\end{figure}

\subsection{More complicated constraints}
Finally, we show that our formalism allows for more complicated constraints than the ones we have previously shown. With $H = H^0$ and $V$ a space of degree-$(N-1)$ polynomials as before, we consider two new kinds of constraints:
\begin{itemize}
  \item $J_1 = \left\{ f \in V \; | f(x) \geq |x| \;\; \forall \; x \in [-1,1] \right\}$
  \item $J_2 = \left\{ f \in V \; | -\sign(x) f(x) \geq |x| \;\; \forall \; x \in [-1,1] \right\}$
\end{itemize}
Constraint set $J_1$ can be defined as the intersection of two conic constraints: for $x \in [-1,0]$, we enforce $f(x) \geq -x$. For $x \in [0,1]$ we enforce $f(x) \geq x$. Constraint set $J_2$ enforces $f(x) \geq -x$ for $x \in [-1,0]$ as before, but now enforces $f(x) \leq x$ for $x \in [0,1]$. Note that $J_2$ implicitly enforces $f(0) = 0$, but we do not explicitly require this in our algorithm. Since $x \in V$ when $N \geq 2$, we can handle these constraints with our setup.

We consider the test function $f(x) = |x|$; the optimization successfully terminates and results are shown in Figure \ref{fig:Resexpt2}.


%

\begin{figure}[H]
  \begin{center}
    \includegraphics[width=0.32\textwidth]{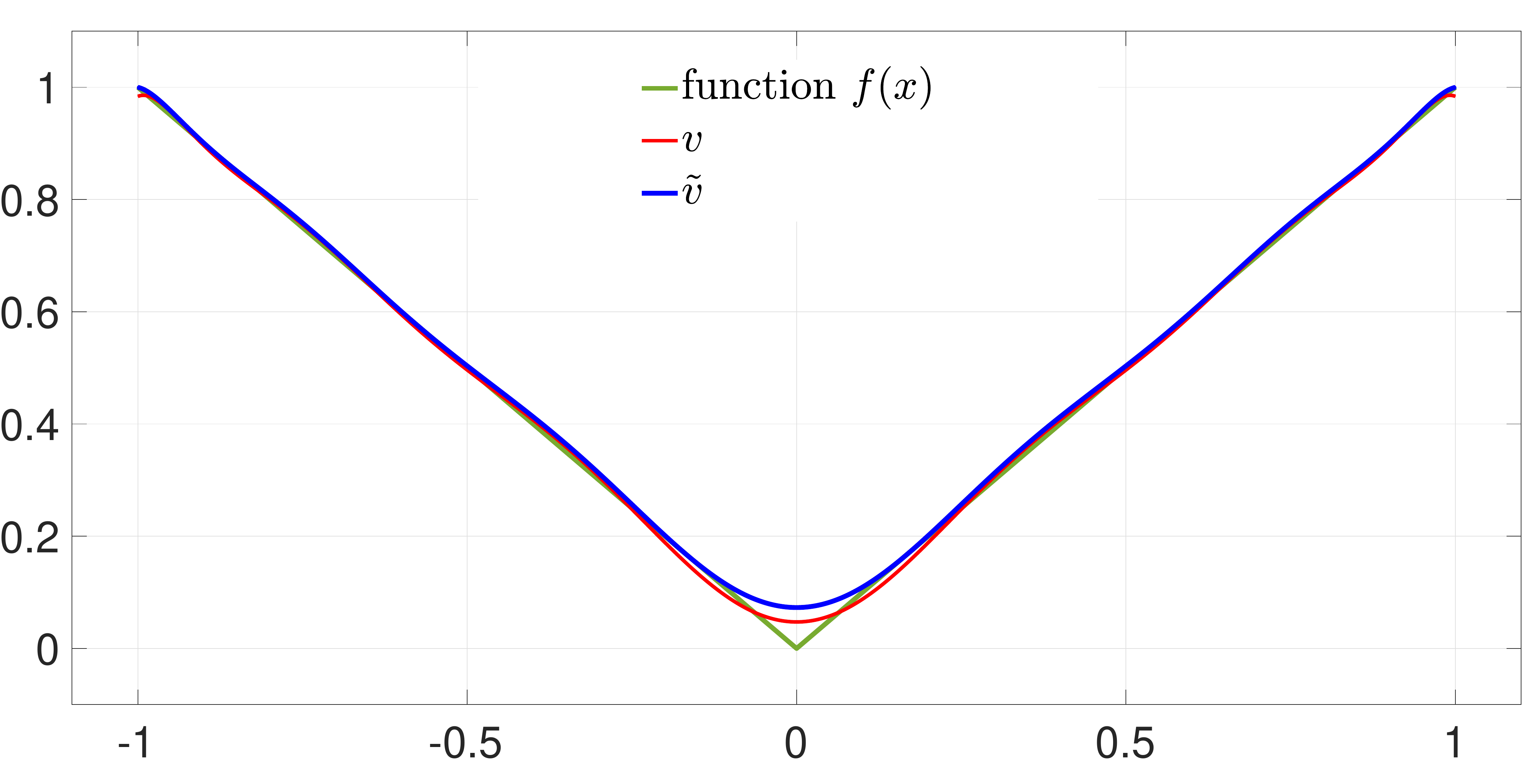}
    \includegraphics[width=0.32\textwidth]{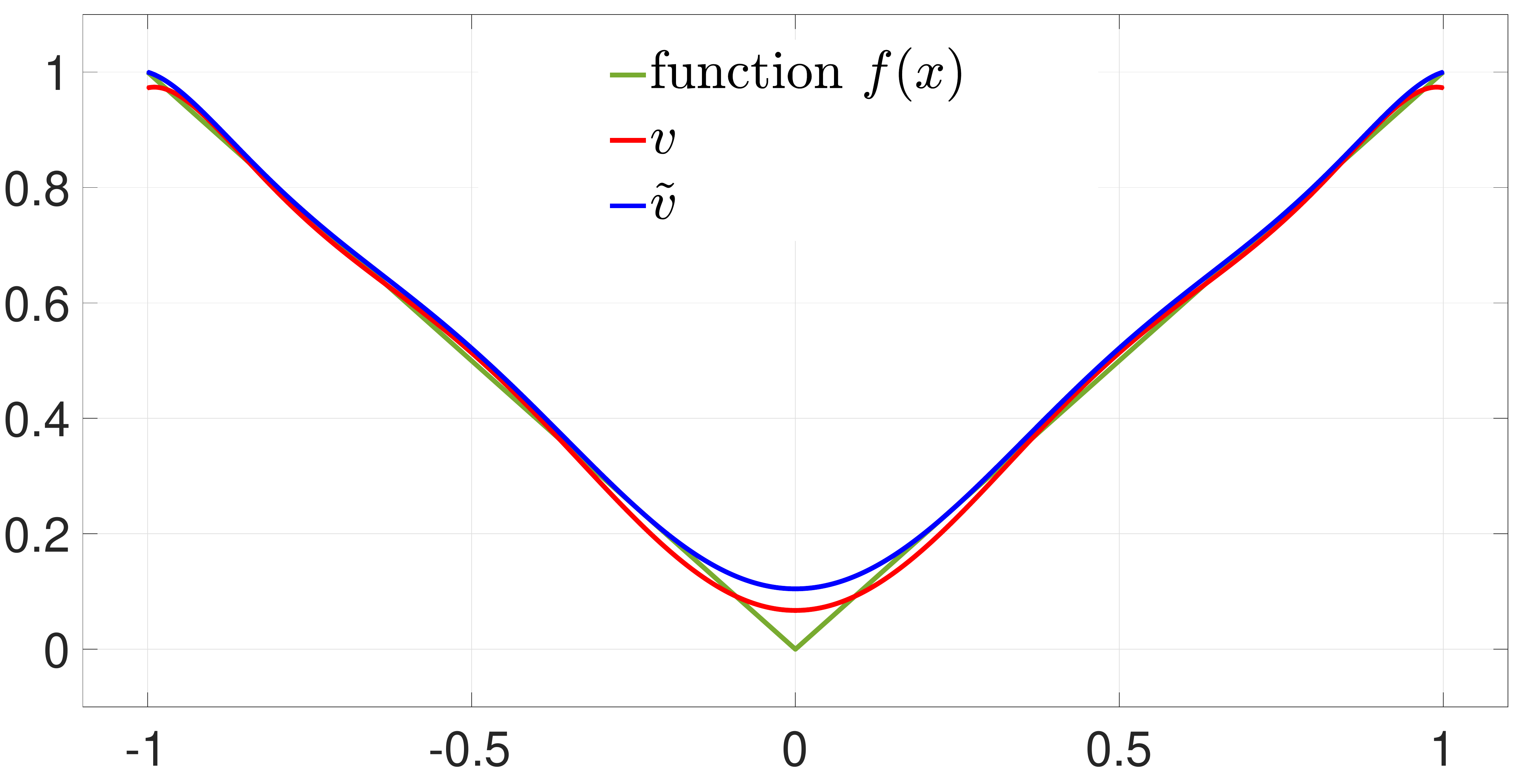}
    \includegraphics[width=0.32\textwidth]{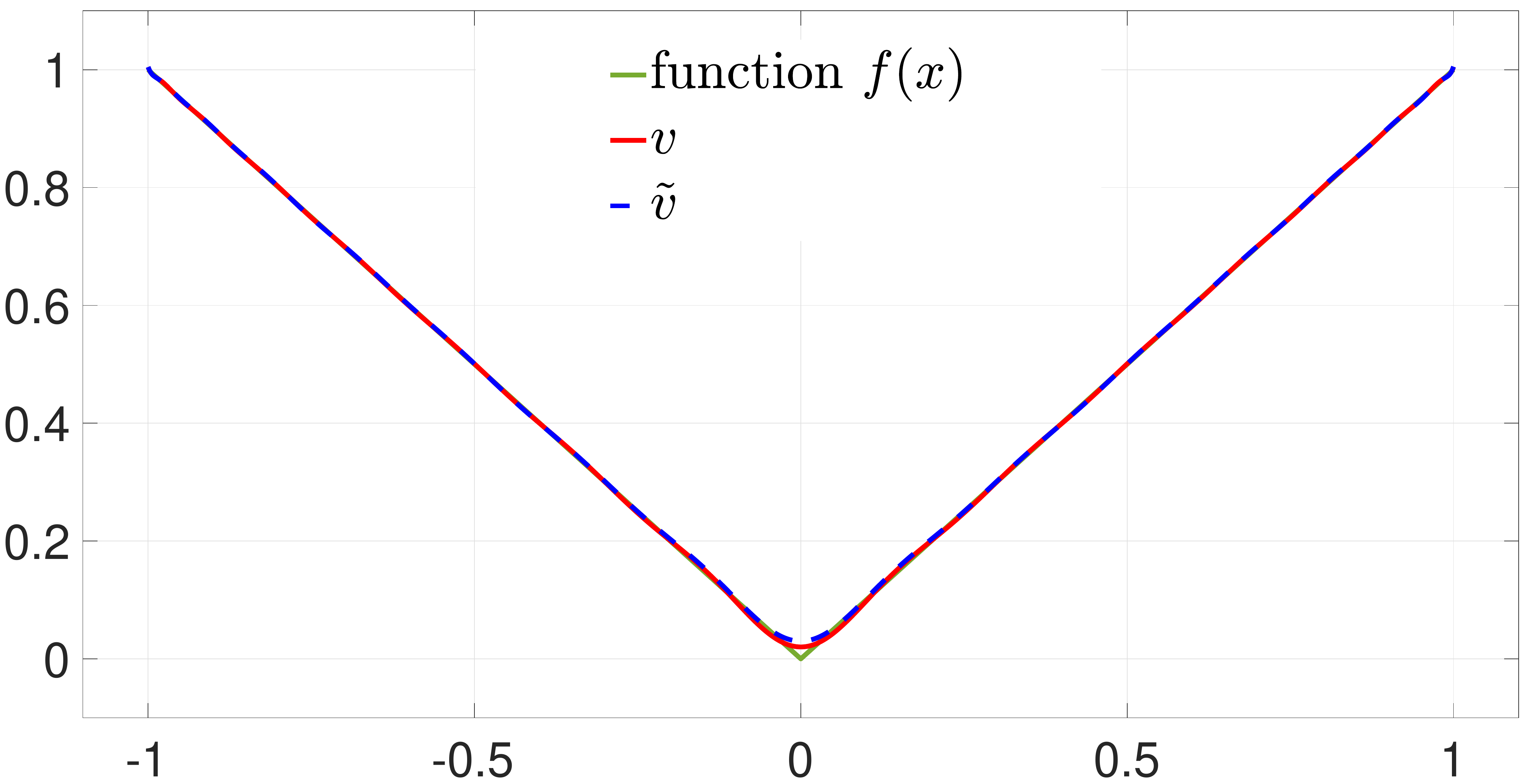} \\
    \includegraphics[width=0.32\textwidth]{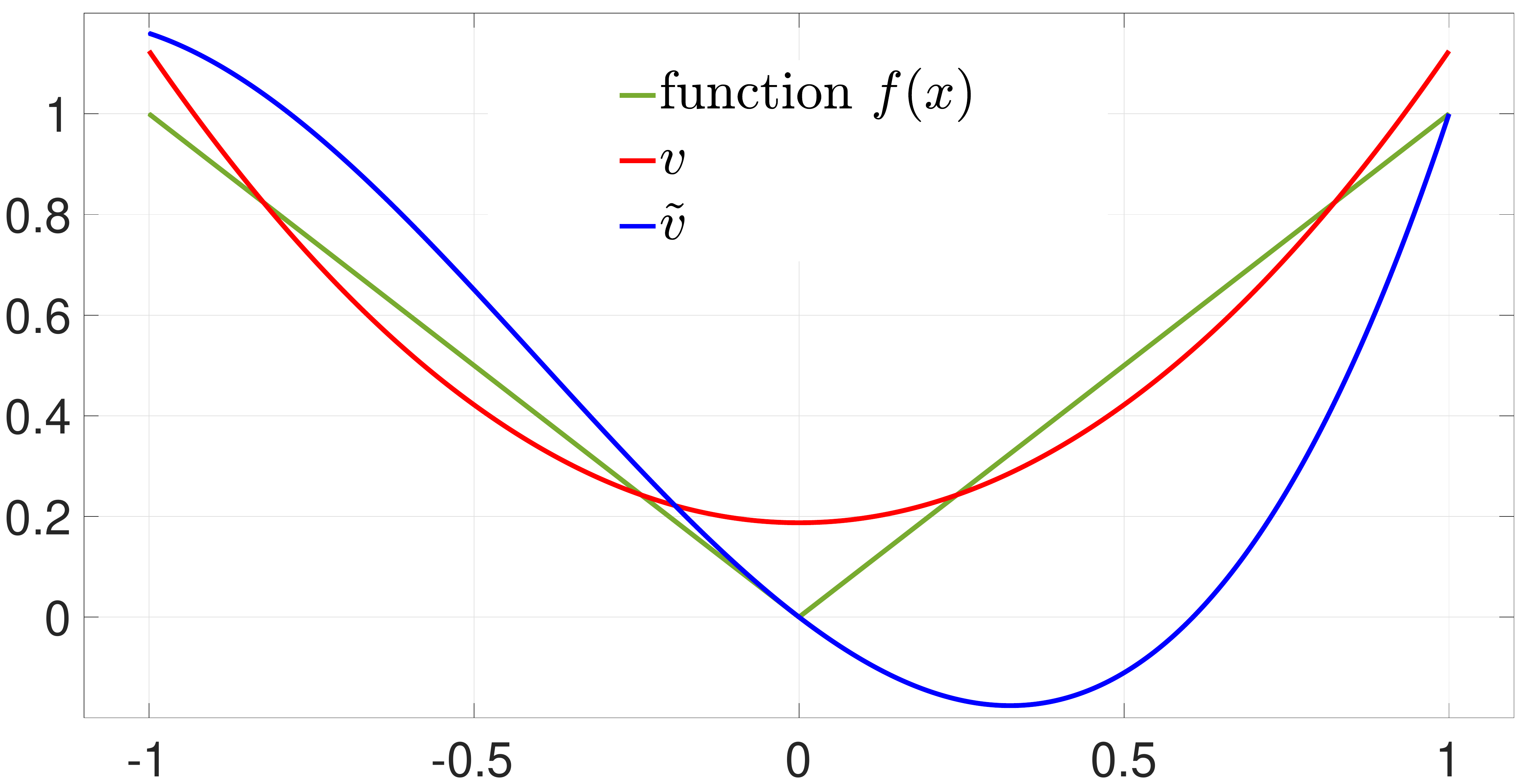}
    \includegraphics[width=0.32\textwidth]{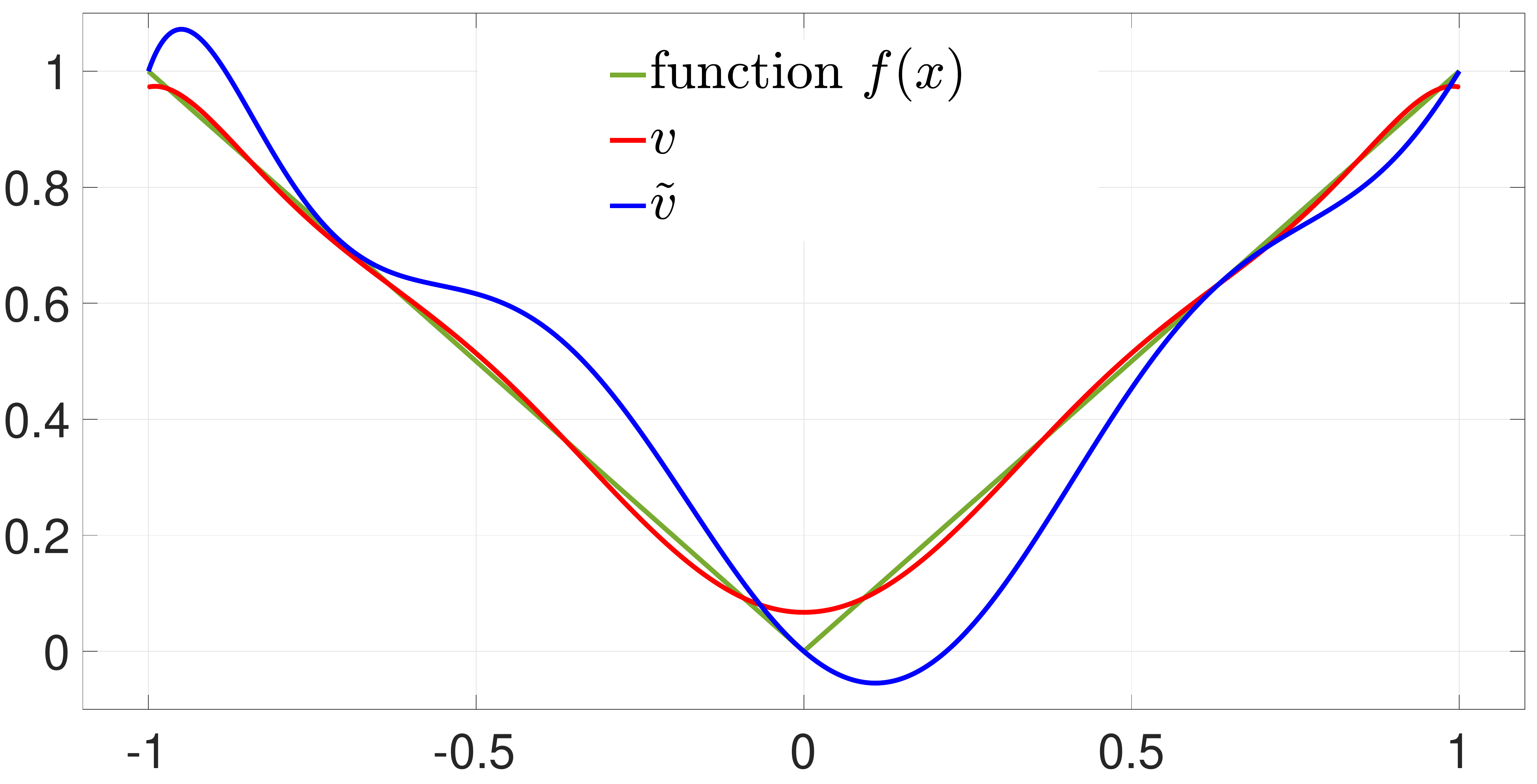} 
    \includegraphics[width=0.32\textwidth]{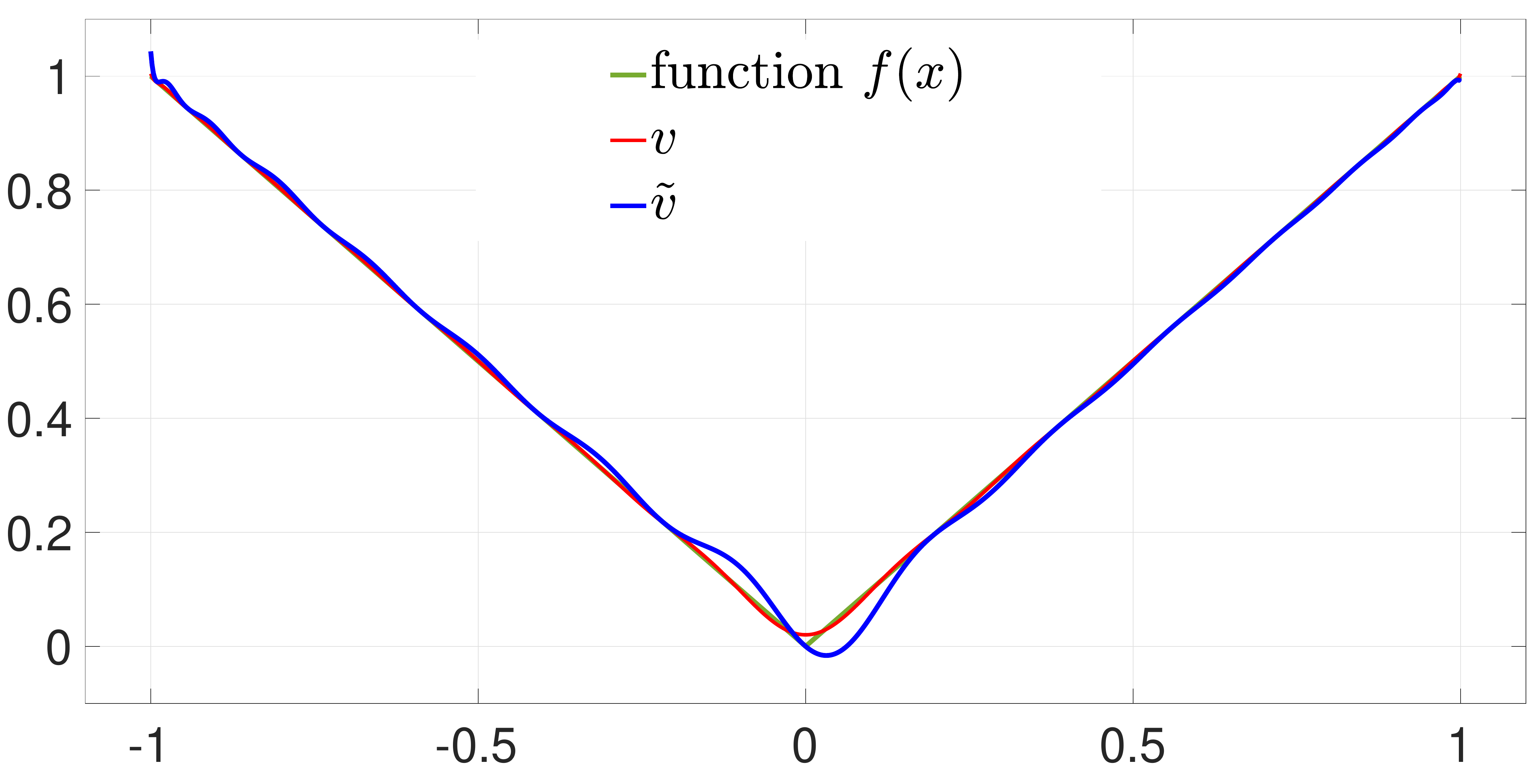} 
  \end{center}
  \caption{Algorithm results from unusual constraints for $f(x) = |x|$. Top: Constraint set $J_1$. Bottom: constraint set $J_2$. Left: $N = 4$. Center: $N=9$. Right: $N=31$.}\label{fig:Resexpt2}
\end{figure}

\section{Conclusions}
\label{sec:conclusions}

We have proposed a formalism for performing constrained function approximation. Restricting the class of possible constraints to those that are convex assures a unique solution to the constrained function approximation problem in Hilbert spaces. Typical constraints of interest such as positivity or monotonicity are specializations of our setup. We propose three iterative algorithms to compute solutions to the problem. Each algorithm requires minimization or level set detection on a weighted version of the current approximant, and thus can be expensive. In one dimension with polynomial approximation, our algorithms require only the ability to accurately compute roots of polynomials.  We have demonstrated the flexibility, feasibility, and utility of our constrained approximation setup with many examples, including empirical investigation of convergence rates.

\an{For higher dimensions, we require the ability to find the minimum of a non-polynomial multivariate function, and so our optimization problem becomes much more complex and expensive. Our difficulties in computing global minima correspond precisely to the known difficulty of globally solving the ``lower-level" problem in semi-infinite programming methods, and our algorithms do not provide novel or constructive approaches to addressing this more general challenge in SIP algorithms. Therefore, identifying approaches to make our algorithm usable for multivariate approximation problems is the subject of ongoing research.} 

\section*{Acknowledgments}
Vidhi Zala and Robert M. Kirby acknowledge support from the National Science Foundation under DMS-1521748 and the Army Research Office under ARO W911NF-15-1-0222 (Program Manager Dr. Mike Coyle). Akil Narayan was partially supported by NSF DMS-1848508. 

\appendix

\section{Algorithms for univariate polynomial subspaces}\label{app:poly-methods}

We present procedures for solving the greedy and averaging optimization procedures in sections \ref{ssec:alg-greedy} and \ref{ssec:alg-averaged} under the assumption that $V$ is a complete, univariate polynomial space. More formally, we make three specializing assumptions. 

The first assumption is that $H$ an $L^2$-type space. A typical setup in one dimension is that $\Omega$ is a interval in (and possibly equal to) $\R$, and a weighted $L^2$ space is defined by a probability density function $\rho$:
\begin{align*}
  \left\langle u, v \right\rangle_{L^2_\rho} \coloneqq \int_\Omega u(x) v(x) \rho(x) \dx{x}
\end{align*}
The second specializing assumption in this section is that $V$ is a complete polynomial space. For a finite $N \in \N$, the space $V$ contains polynomials up to degree $N-1$. Then, $\{v_j\}_{j=1}^N$ can be chosen as the first $N$ orthonormal polynomials under the weight $\rho$ on $\Omega$. It is classical knowledge that such a family of polynomials satisfies the three-term recurrence:
\begin{align*}
  x v_n(x) &= b_{n+1} v_{n+1}(x) + a_{n+1} v_n(x) + b_{n} v_{n-1}(x), & n &\geq 1,
\end{align*}
with the starting conditions $v_0 \equiv 1$ and $v_{-1} \equiv 0$, where $a_n = a_n(\rho)$ and $b_n = b_n(\rho)$ are the recurrence coefficients \cite{szego_orthogonal_1975}.

The third specializing assumption is that we are in the setup of Example \ref{ex:positivity} where the constraints enforce positivity $v(x) \geq 0$ for every $x \in \Omega$. We will see that this assumption can be relaxed substantially; indeed we make this assumption here to only clarify some computations.

An important technique that we will need to exploit for this special setup is the ability to compute roots of polynomials from their expansion coefficients, i.e., if $v \in V$ has expansion coefficients $\{\widehat{v}_j\}_{j=1}^N$, then the $N-1$ (complex-valued) roots of $v$ coincide with the spectrum of the $(N-1) \times (N-1)$ \textit{confederate} matrix $\bs{T} = \bs{T}(v)$:
\begin{align}\label{eq:T-def}
  \bs{T}(v) &= \bs{J} - \frac{b_{N-1}}{\widehat{v}_N} \bs{e}_{N-1} \bs{\widetilde{\widehat{v}}}^T, &   
  \bs{J} &= \left( \begin{array}{ccccc} a_1 & b_1 & & & \\
                                  b_1 & a_2 & b_2 & & \\
                                      & b_2 & a_3 & b_3 & \\
                                      &     & \ddots & \ddots & \\
                                      &     &     & b_{N-2} & a_{N-1}
      \end{array}\right)
\end{align}
where $\bs{e}_{N-1} \in \R^{N-1}$ is the cardinal unit vector in the $(N-1)$st direction and $\bs{\widetilde{\widehat{v}}}^T = (\widehat{v}_1, \ldots, \widehat{v}_{N-1})$. The matrix $\bs{J}$ is the Jacobi matrix and is independent of $v$. We use direct eigenvalue solvers to compute the spectrum of $\bs{T}(v) = v^{-1}(0)$. Note that there are backwards stable versions of the task of computing roots from the spectrum of related matrices \cite{nakatsukasa_stability_2016}. \an{An analogous approach that operates on expansion coefficients in a monomial basis uses the spectrum of the \emph{companion } matrix. Note that our strategy is rather rudimentary compared to more sophisticated methods for computing roots of polynomials \cite{boyd_computing_2003}, e.g., one can compute polynomial roots on subintervals and perform refinement. However, this consideration is not the main innovation of our algorithm, and so we use the procedure above mainly for simplicity. We do perform a numerical stability check where we switch between companion and confederate matrices depending on which has smaller condition number. In all the examples we attempted for this manuscript, this check was sufficient to robustly and accurately compute roots of polynomials.}

\subsection{Greedy projections}
With the setup of Example \ref{ex:positivity}, the problem \eqref{eq:global-minimization} requires us to compute 
\begin{align*}
  y^\ast = \argmin_{y \in \Omega} \mathrm{sdist}\left(\bs{\widehat{v}}, H_1(y)\right) \stackrel{\eqref{eq:sdist-1d}}{=} \argmin_{y \in \Omega} v(y) \lambda(y).
\end{align*}
To minimize the last expression, we can compute the critical points, which are the roots of the derivative. Using \eqref{eq:Riesz-example}, we have
\begin{align*}
  \ddx{y} [ v(y) \lambda(y) ] = \lambda^3(y) \left[ v'(y) \sum_{j=1}^N v_j^2(y) - v(y) \sum_{j=1}^N v_j(y) v_j'(y) \right].
\end{align*}
Note that $\lambda^3$ cannot vanish, so the critical points coincide with the roots of the bracketed expression above, which is a degree-$(3 N - 4)$ polynomial. Thus, 
\begin{align*}
  \frac{\ddx{y} [ v(y) \lambda(y) ]}{\lambda^3(y)} = \sum_{j=1}^{3 N - 3} \widehat{g}_j v_j(y) \eqqcolon g(y),
\end{align*}
for some coefficients $\widehat{g}_j$.
The computation $\left\{\widehat{v}_j \right\} \mapsto \left\{\widehat{g}_j \right\}$ can be accomplished using \textit{only} the recurrence coefficients in $\mathcal{O}(N^2)$ time without resorting to, e.g., quadrature. 

In summary, the global minimum in \eqref{eq:global-minimization} can be computed by first computing the $\widehat{g}_j$ expansion coefficients defined above, and then by computing the spectrum of the $(3N - 4) \times (3N-4)$ matrix $\bs{T}(g)$. To compute the global minimizer, we then need only evaluate the discrete minimum of $v(y) \lambda(y)$ over the eigenvalues located in $\Omega$.

\subsection{Averaged projections}
The main task for the averaged projections procedure is to compute the integral in \eqref{eq:c-update-averaged}. In our specialized setup, this task reduces to computing
\begin{align*}
  \frac{1}{|\omega_1^-|}\int_{\omega_1^-}  \bs{\widehat{\ell}}_1(y) v(y) \lambda(y) \dx{y},
\end{align*}
which is an $N$-component vector, where component $j$ of this vector has the entry
\begin{align}\label{eq:averaging-component}
  \frac{1}{|\omega_1^-|}\int_{\omega_1^-}  v_j(y) v(y) \lambda(y) \dx{y}.
\end{align}
The first step is to identify the set $\omega_1^-$ defined in \eqref{eq:averaging-set}, which in this special case is equivalent to
\begin{align*}
  \omega_1^- &= \left\{ y \in [-1,1] \; \big|\; v(y) < 0 \right\}.
\end{align*}
Therefore, this set can be identified by examining the roots of $v$, which are the eigenvalues of $\bs{T}(v)$. Thus, we partition $[-1,1]$ into subintervals on which $v$ is single-signed, after which determining the sign of $v$ on an interval can be accomplished by evaluating $v$ in this interval.

After $\omega_1^-$ is identified as a disjoint collection of subintervals of $[-1,1]$, we compute the components of the update \eqref{eq:averaging-component} by employing an $M$-point Gaussian quadrature rule; since the integrand $v_j v \lambda$ is a smooth function on $[-1,1]$, this can be completed efficiently. We employ $M = N+1$ quadrature points for this same computation.

\bibliographystyle{siamplain}
\bibliography{references}
\end{document}